\def\@settitle{\begin{center}%
  \baselineskip14\p@\relax
    \normalfont\LARGE

  \@title
  \end{center}%
}
\newcolumntype{P}[1]{>{\centering\arraybackslash}p{#1}}
\newcolumntype{M}[1]{>{\centering\arraybackslash}m{#1}}
\let\oldmarginpar\marginpar
\renewcommand\marginpar[1]{\-\oldmarginpar[\raggedleft\footnotesize #1]%
	{\raggedright\footnotesize #1}}
\theoremstyle{plain}
\newtheorem{thm}{Theorem}[section]
\newtheorem{lemma}[thm]{Lemma}
\newtheorem*{theorem*}{Theorem}
\newtheorem*{corollary*}{Corollary}
\newtheorem{prop}[thm]{Proposition}
\theoremstyle{definition}
\newtheorem{remark}[thm]{Remark}
\numberwithin{equation}{section}
\newcommand{\Z}{\mathbb{Z}}
\newcommand{\Q}{\mathbb{Q}}
\newcommand{\R}{\mathbb{R}}
\newcommand{\C}{\mathbb{C}}
\newcommand{\La}{\Lambda}
\newcommand{\la}{\lambda}
\newcommand{\sse}{\subset}
\newcommand{\lr}{\longrightarrow}
\newcommand{\sm}{\setminus}
\newcommand{\wt}{\widetilde}
\newcounter{daggerfootnote}
\def \vertbar [#1](#2,#3,#4){
    \draw [#1] (#2,#3) -- (#2,#4);
    \draw [fill=white] (#2,#3) circle [radius=0.1];
    \draw [fill=black] (#2,#4) circle [radius=0.1];
}
\providecommand{\leftsquigarrow}{%
  \mathrel{\mathpalette\reflect@squig\relax}%
}
\newcommand{\reflect@squig}[2]{%
  \reflectbox{$\m@th#1\rightsquigarrow$}%
}
\def\Ddots{\mathinner{\mkern1mu\raise\p@
\vbox{\kern7\p@\hbox{.}}\mkern2mu
\raise4\p@\hbox{.}\mkern2mu\raise7\p@\hbox{.}\mkern1mu}}
\def \horline [#1](#2,#3,#4){
    \draw [#1] (#2,#4) -- (#3,#4);
    \draw [fill=white] (#2,#4) circle [radius=0.1];
    \draw [fill=black] (#3,#4) circle [radius=0.1];
}
\def \crossing (#1,#2)(#3,#4){
\draw (#1,#2) -- (#3,#4);
\draw (#1,#4) -- (#3,#2);
}
\DeclareFontFamily{U}{mathb}{}
\DeclareFontShape{U}{mathb}{m}{n}{
  <-5.5> mathb5
  <5.5-6.5> mathb6
  <6.5-7.5> mathb7
  <7.5-8.5> mathb8
  <8.5-9.5> mathb9
  <9.5-11.5> mathb10
  <11.5-> mathbb12
}{}
\tikzset{tangent/.style={decoration={markings,mark=at position #1 with {
      \coordinate (tangent point-\pgfkeysvalueof{/pgf/decoration/mark info/sequence number}) at (0pt,0pt);
      \coordinate (tangent unit vector-\pgfkeysvalueof{/pgf/decoration/mark info/sequence number}) at (1,0pt);
      \coordinate (tangent orthogonal unit vector-\pgfkeysvalueof{/pgf/decoration/mark info/sequence number}) at (0pt,1);
      }},postaction=decorate},
    use tangent/.style={
        shift=(tangent point-#1),
        x=(tangent unit vector-#1),
        y=(tangent orthogonal unit vector-#1)
    },
    use tangent/.default=1
    }
\begin{document}

\title{The Legendrian Hopf Link has exactly two Lagrangian fillings}
	

    \author{Bryce Thomson}
	\address{University of California Davis, Dept. of Mathematics, USA}
	\email{bthomson@ucdavis.edu}


\begin{abstract}
We prove that there are precisely two embedded exact Lagrangian fillings of the standard Legendrian Hopf link, up to compactly supported Hamiltonian isotopy. It was known that the standard Legendrian Hopf link admitted at least two such Lagrangian fillings: we show these are all. Specifically, we use a type of neck-stretching procedure to construct a pseudoholomorphic conic fibration that makes a given arbitrary exact Lagrangian filling fiber over a real curve, under a global pseudoholomorphic Lefschetz fibration. This then allows for an explicit Hamiltonian isotopy to be constructed from any given Lagrangian filling to one of two known standard fillings.
\end{abstract}

\maketitle



\section{Introduction}\label{sec:intro}
In this paper we complete the classification embedded exact Lagrangian fillings of the Legendrian Hopf link, up to compactly supported Hamiltonian isotopy in the standard symplectic Darboux ball. A front for this Legendrian link in the standard contact Darboux ball is depicted in Figure \ref{fig:maxtb_hopf}. The first step in the argument is the construction of a pseudoholomorphic Lefschetz fibration on the symplectic Darboux ball which is compatible with a given Lagrangian filling, so that it fibers over a real curve in the real plane under such a fibration. The second step shows that there are only two Hamiltonian isotopy classes of Lagrangian fillings that fiber over a real curve with the given boundary conditions.

\subsection{Scientific Context}\label{sec:scientificContext}
The study of exact Lagrangian fillings of Legendrian links has a prominent role in contact and symplectic topology, cf.~e.g.~\cite{MR3402346,MR4358415,Casals24_ClusterSeed,Casals22_braidloopsinfinitemonodromy,Chekanov97_LDGA,Ekholm12_LagrangianCobord,EGH00_SFT,MR4194293}. They can be used to study the Legendrian contact dg-algebra and its augmentations, see e.g.~\cite{Chekanov97_LDGA,MR4575870}, normal rulings of Legendrian fronts \cite{MR3335247,MR3784016} and Weinstein 4-manifolds and their Lagrangian skeleta, see e.g.~\cite{Casals19_LegFronts,Casals20_LagrangianSkeleta}. Recently, Lagrangian fillings have also been used to establish results on cluster algebras, see e.g.~\cite{casals2023demazureweavesreducedplabic,Casals24_Microlocal,Casals24_ClusterBraidVarieties}.

Even if the classification of exact Lagrangian fillings lies at the heart of low-dimensional contact and symplectic topology, it is still largely unresolved. To wit, the only Legendrian link for which we have a complete non-empty classification of Lagrangian fillings is the max-tb Legendrian unknot. This was a groundbreaking result proven in 1996, cf.~\cite[Theorem 1.1.A]{Eliashberg96_LocalLagr2Knots}. Many of the results since have focused on constructing and distinguishing Lagrangian fillings: the former via geometric methods, the latter via Floer-theoretic and sheaf-theoretic invariants.

Recently, R.~Casals introduced a precise conjecture on the classification of Lagrangian fillings for Legendrian links, based on his work on Lagrangian fillings and cluster algebras, see e.g.\cite{MR4358415,Casals24_ClusterSeed,Casals24_ClusterBraidVarieties} and the recent lecture series \cite{casals_cbms}. The scope of the conjecture is large but, in particular, it conjectures that the standard Legendrian Hopf Link should have exactly two Lagrangian fillings, as the cluster algebra of mutable type $A_1$ has exactly two cluster seeds. Two such Lagrangian fillings were constructed and distinguished in \cite{Ekholm12_LagrangianCobord,Pan17_ExactLagrFillings}, and thus it remains to prove that any other embedded exact Lagrangian filling ought to be compactly supported Hamiltonian isotopic to one of these two. The goal of this article is to establish such fact, thus completing the classification of Lagrangian fillings of the standard Legendrian Hopf link.



\subsection{Main Result}
An oriented embedded exact Lagrangian filling $L$ of a Legendrian link $\Lambda\sse (S^3,\xi_{st})$ is an oriented embedded exact Lagrangian submanifold of the symplectization of $(S^3,\xi_{st})$ which is asymptotic to $\Lambda$ in a collar neighborhood of $S^3$, cf.~ \Cref{sec:Lagr_def} for a precise definition. The focus of this article is on the Legendrian link $\La_h$ whose front is in \cref{fig:maxtb_hopf}: the standard Legendrian Hopf link, with two max-tb unknot components. We consider the Legendrian link $\La_h\sse(S^3,\xi_{st})$ in the standard contact 3-sphere and its Lagrangian fillings in the standard symplectic Darboux ball $(B^4,\omega_{0})$, understood as an exact symplectic filling of $(S^3,\xi_{st})$. Given that we already have at least two distinct embedded fillings of $\La_h$ up to Hamiltonian isotopy, see e.g.~\Cref{sec:sheafClassification}, our goal is to argue that any other Lagrangian filling falls into one of these two Hamiltonian isotopy classes. This is the content of our main result:

\begin{thm}\label{thm:main}
Let $\La_h\sse(S^3,\xi_{st})$ be the Legendrian Hopf link, whose front is depicted in \Cref{fig:maxtb_hopf}. Then, there exist exactly two oriented embedded exact Lagrangian fillings of $\La_h$ in the standard Darboux ball $(B^4,\omega_{0})$, up to compactly supported Hamiltonian isotopy.
\end{thm}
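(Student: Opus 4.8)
\emph{Strategy.} The plan is to present $(B^4,\omega_0)\sse(\C^2,\omega_0)$ so that $\La_h=\{xy=0\}\cap S^3$, i.e.\ $\La_h$ is the binding of the open book on $S^3$ cut out by the model conic fibration $\pi_0=xy\colon B^4\to\D$. Its regular fibres are holomorphic annuli with boundary on $S^3$, its single nodal fibre over $0\in\D$ is the union of the two coordinate disks with common boundary $\La_h$, and $\pi_0$ serves as the standard Lefschetz fibration against which every filling is compared; the two known fillings of \cite{Ekholm12_LagrangianCobord,Pan17_ExactLagrFillings} will reappear at the end as the two Lagrangian smoothings of this nodal fibre. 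Given an arbitrary oriented embedded exact Lagrangian filling $L$, I would first normalize: using the Weinstein neighbourhood theorem and a compactly supported \Ham\ isotopy, arrange that in a collar of $S^3$ the filling $L$ coincides with the model Lagrangian cylinder over $\La_h$ compatible with $\pi_0$. (Exactness together with the fact that the two components of $\La_h$ have linking number $\pm1$ already rules out $L$ being disconnected, so $L$ is an annulus.)

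\textbf{Step 1 (the adapted conic fibration; technical core).} Choose a generic $\omega_0$-tame almost complex structure $J$, cylindrical near $S^3$ and standard there, and consider the moduli space $\cM_J$ of $J$-holomorphic conics $u\colon(\Sigma,\partial\Sigma)\to(B^4,S^3)$ in the relative class of a $\pi_0$-fibre, with boundary on the open-book pages in $S^3$. Via a neck-stretching along $S^3$ and SFT/Gromov compactness \cite{EGH00_SFT}, I would show: $(\mathrm i)$ after the natural compactification $\cM_J$ is a compact surface whose only interior degeneration is the nodal breaking of a conic into the two holomorphic disks along the critical fibre --- sphere bubbling being excluded on homological grounds, disk bubbling with boundary on $L$ by exactness of $L$ and Stokes, and escape of curves by the boundary condition; $(\mathrm{ii})$ by positivity of intersection and automatic transversality in real dimension four, the conics are embedded, pairwise disjoint off the node, and a degree count forces them to foliate $B^4$, producing a genuine $J$-holomorphic conic (Lefschetz) fibration $\pi_J\colon B^4\to\D$ extending $\pi_0$ near $S^3$, with one nodal fibre; $(\mathrm{iii})$ again by positivity of intersection, with exactness of $L$ forbidding closed or boundaryless $J$-holomorphic pieces contained in $L$, every fibre of $\pi_J$ meets $L$ in exactly one embedded circle, except over the critical value where it meets $L$ along $\partial L=\La_h$. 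Hence $\pi_J|_L$ has image an embedded real curve $\g\sse\D$ and realizes $L$ as a circle fibration over $\g$ (a copy of $\g\x S^1$ in suitable fibre coordinates); after an ambient isotopy of the base, $\g\sse\R\cap\D$.

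\textbf{Step 2 (classification over a real curve; matching).} This reduces the problem to two dimensions. The arc $\g$ is anchored at the critical value by the condition $\partial L=\La_h$ and is otherwise unknotted, so up to base isotopy fixing the boundary data there are exactly two choices of $\g$ --- running along one side of the critical value or the other --- which is precisely the dichotomy recorded by the two cluster seeds of type $A_1$. For each $\g$, I would argue that the circle $L$ cuts in each annulus fibre is pinned down, up to isotopy, by the exactness/area constraint, and that these circles can be straightened simultaneously and parametrically over $\g$ to a standard profile; assembling the resulting fibrewise isotopies yields a compactly supported \Ham\ isotopy from $L$ to one of two explicit models. Identifying these two models with the fillings of \cite{Ekholm12_LagrangianCobord,Pan17_ExactLagrFillings}, which are mutually non-\Ham-isotopic by the invariants recalled in \Cref{sec:sheafClassification}, then gives exactly two Hamiltonian isotopy classes and proves \Cref{thm:main}.

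\textbf{Expected main obstacle.} The crux is Step 1: ensuring the neck-stretched moduli space of conics closes up into an honest $J$-holomorphic Lefschetz fibration adapted to $L$ --- ruling out every unexpected bubbling or breaking, securing transversality/regularity, and establishing that each fibre meets $L$ in a single circle rather than in a more complicated $1$-complex or with isolated intersection points. Once this foliation is available, the normalization and the classification over $\g$ are comparatively soft, echoing the essentially two-dimensional nature of Eliashberg's classification of Lagrangian disks for the unknot \cite{Eliashberg96_LocalLagr2Knots}.
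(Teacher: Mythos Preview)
Your high-level plan---produce a pseudoholomorphic conic fibration adapted to $L$ via neck-stretching, then classify fillings fibering over a real arc---is exactly the paper's strategy. But two concrete choices create genuine gaps.

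\emph{Where you stretch.} You propose to stretch along $S^3$, but your conics already have their boundary there; this governs only their asymptotics near $\partial B^4$ and says nothing about how a conic meets $L$ in the interior. To force conics to break along closed curves on $L$ one must stretch around $L$ itself. The paper does this by compactifying to $\C P^2$, completing the cylinder $L$ to a Lagrangian \emph{torus} $\overline{L}\sse\C P^2\sm\ell_\infty$ (using that $L$ is standard near its ends), and then stretching the neck around $\overline{L}$. Exactness of $L$ enters exactly where you want it: it kills would-be top-level cylinders in a broken conic, leaving precisely two planes asymptotic to a single geodesic class $\pm e_0$ on $\overline{L}$, which glue to a conic meeting $\overline{L}$ in one circle. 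Without stretching around the Lagrangian, your claim (iii) that each fibre meets $L$ in a single circle has no mechanism behind it.

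\emph{The model for $\La_h$ and the dichotomy.} For the round sphere, $\{xy=0\}\cap S^3$ is the \emph{transverse} Hopf link, not the Legendrian one, so your presentation cannot be made literally. In the paper's model each component of $\La_h$ is the zero section of a \emph{regular} fibre (over $e^{i\pi/4}$ and $e^{i7\pi/4}$), while the nodal fibre lies over $0$ and is disjoint from $\La_h$. The base curve $\sigma=f_J(L)$ therefore joins two regular values and \emph{avoids} $0$; the two Hamiltonian classes are distinguished by the winding number of $\sigma$ around $0$. Your picture---$\gamma$ anchored at the critical value, the two fillings arising as ``Lagrangian smoothings of the nodal fibre'' and distinguished by lying on ``one side or the other''---does not hold: the nodal fibre is a symplectic (complex) curve rather than a Lagrangian one, and an embedded arc in the disc with both ends at a single interior point has a unique isotopy class rel that point, so no dichotomy would appear that way.
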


\begin{center}
	\begin{figure}[h!]
		\centering
		\includegraphics[scale=.8]{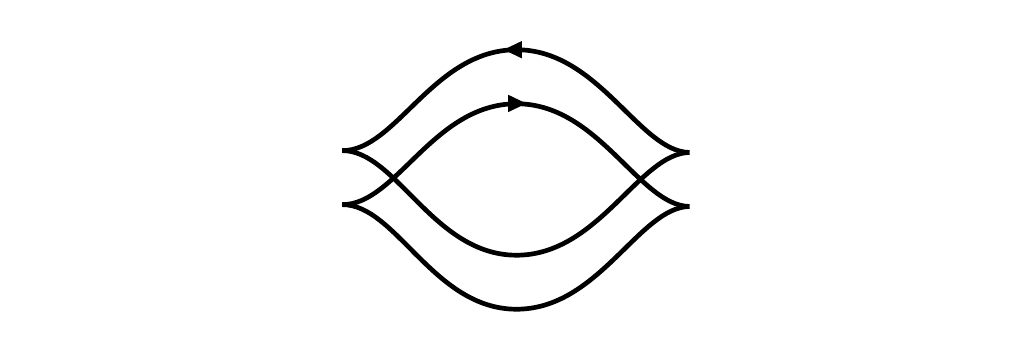}
		\caption{The front projection of the standard Legendrian Hopf link $\Lambda_h\sse (S^3,\xi_{st})$.\vspace{.75cm}}
		\label{fig:maxtb_hopf}
	\end{figure}
\end{center}

{\bf Acknowledgments}. I would like to first and foremost acknowledge the many invaluable conversations with my PhD advisor, Roger Casals, which have led to this paper. I would like to thank him for introducing me to the world of Lagrangian fillings, and for so generously sharing with me his time, expertise, and suggestions.
I would also like to thank Alex Simons for the many interesting and useful discussions regarding this project as well as Legendrians and Lagrangian fillings as a whole.
I am eternally grateful to my family: Mom, Dad, Paisley, Cameron, Grammy, and Poppa for their unconditional love and support and their never-ending encouragement. I would not be where I am today without them.

{\bf Structure of the manuscript}. The content of each section is as follows:
\begin{itemize}
    \item[(i)] \Cref{sec:setting} starts with the basic definitions required from symplectic and contact topology. It then discusses the standard Lefschetz fibration on $(\C^2,\omega)$ and how this gives us a first geometric picture: the max-$tb$ Legendrian Hopf link can be seen as a fibered link with respect to this fibration restricted to $(S^3, \xi_{st})\sse (\C^2,\omega)$ and two standard Lagrangian fillings can be constructed by choosing them to be smooth $S^1$-bundles over open curves in the base of this fibration. The section ends with a discussion of the necessary ingredients on pseudoholomorphic curves and \Cref{thm:DRfibr}, which establishes that pseudoholomorphic conic fibrations with properties similar to the standard Lefschetz fibration persist for any choice of tame almost complex structure.\newline

    \item[(ii)] \Cref{sec:fibration} executes one of the main ideas of the paper: given any embedded exact Lagrangian filling $L$ of the max-$tb$ Legendrian Hopf link, we can find a pseudoholomorphic conic fibration such that $L$ is graphical with respect to this fibration, in that it fibers over a real curve in the plane. The proof relies on a neck-stretching procedure, which, broadly, allows us to obtain broken pseudoholomorphic conics which break along closed curves (geodesics) on the filling $L$. The broken pieces can then be put back together to construct the desired global fibration.\newline

    \item[(iii)] \Cref{sec:hamiso} uses the compatible fibration established in \Cref{sec:fibration} in order to construct explicit Hamiltonian isotopies that bring any filling $L$ to one of the two standard fillings. This concludes the proof of \Cref{thm:main}.
\end{itemize}

\section{Preliminary ingredients}\label{sec:setting}

This section introduces the necessary constructions and results needed to establish \Cref{thm:main}. They are used in \Cref{sec:fibration} and \Cref{sec:hamiso} to prove the theorem.

\subsection{Standard Lefschetz fibrations} Consider the exact symplectic manifold $(\C^2,\omega)$ with $\omega= \frac{i}{2}(dz_1\wedge d\overline{z_1}+dz_2\wedge d\overline{z_2})$ and its standard  Liouville form $\lambda=z_1d\overline{z_1}+z_2d\overline{z_2}$. The standard Lefschetz fibration of $(\C^2,\omega)$ is given by
\begin{align*}
f:(\C^2,\omega)&\to \C,\\
(z_1,z_2)&\mapsto z_1z_2.
\end{align*}
This map is a submersion at every value except $0\in \C$, with regular cylindrical fibers symplectomorphic to $(T^*S^1,d\la_{Liouv})$. We denote the unique singular fiber by $C_{nodal}:=f^{-1}(0)$. We refer to $f$ as a symplectic Lefschetz fibration in the sense that the fibers, $f^{-1}(c)$ for any $c\in \C$, are symplectic with respect to the same symplectic form restricted to the fiber: $\omega|_{f^{-1}(c)}$, cf.~e.g.~\cite{McDuff17_IntroSympl}.

Now take a star-shaped domain $S\sse \C^2$ such that its boundary is diffeomorphic to $S^3$, $\partial S \cong S^3$, and such that the restriction of $f$ to $\partial S$ is a fibration of $\partial S$ over the unit circle $S^1\sse \C$:
\begin{align*}
F:=f|_{\partial S}:\partial S&\to S^1,\\
(z_1,z_2)&\mapsto z_1z_2.
\end{align*}
Note that $\partial S$ is in fact a contact manifold with the contact structure given by $\ker (\lambda|_{\partial S})$. Given that $S^3$ only has one (tight) contact structure \cite{Eliashberg89_OvertwistedStructures}, we have a contactomorphism
$$(\partial S,\ker (\lambda|_{\partial S}))\cong (S^3,\xi_{st}:=TS^3\cap iTS^3).$$

Then for any fiber $F^{-1}(c)$, $c\in S^1$, we have a symplectomorphism
$$(F^{-1}(c),d\lambda|_{F^{-1}(c)})\cong (T^*S^1,d\lambda_{S^1})$$
with coordinates $(p,\theta)\in T^*S^1$ and standard Liouville form $\lambda_{S^1}:= pd\theta$. Now consider an exact Lagrangian submanifold $K\sse (T^*S^1,d\lambda_{S_1})$ where $\lambda_{S^1}|_K = dg$ for some smooth function $g:K\to \R$. In the contactization of a fiber
$$(T^*S^1\times S^1,\xi_{T^*S^1})\cong (S^3, \xi_{st})$$
with coordinates $(p,\theta,\psi)\in T^*S^1\times S^1$ and $\xi_{T^*S^1}:=ker(d\psi -\lambda_{S^1})$, define $\Lambda_K :=\{\big(p,\theta,g(p,\theta)\big):(p,\theta)\in L\}$. $\Lambda_K$ is in fact a Legendrian submanifold as 
$$(d\psi - \lambda_{S^1})|_{\Lambda_K} = d\psi|_{\Lambda_K}-\lambda_{S^1}|_{\Lambda_K} = dg-dg = 0.$$

As such, we define two Legendrian unknots
$$\Lambda_1:=(0,\theta,e^{i\pi/4})\in T^*S^1\times S^1,\quad \Lambda_2:=(0,\theta,e^{i7\pi/4})\in T^*S^1\times S^1$$
and fix our Legendrian Hopf link to be modeled by $\Lambda_h := \Lambda_1\cup \Lambda_2$. Note that using the identifications above, $\Lambda=\La_h$ is a Legendrian submanifold of $(S^3,\xi_{st})$ and is a fibered link with respect to the map $F$ (and in turn $f$ as well), intersecting the fibers $f^{-1}(e^{i\pi/4})$ and $f^{-1}(e^{i7\pi/4})$.

\begin{center}
	\begin{figure}[h!]
		\centering
		\includegraphics[scale=.8]{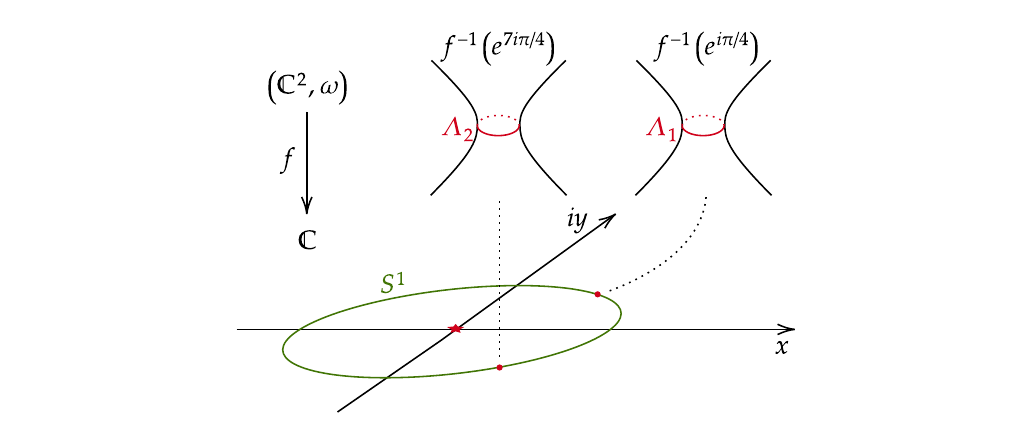}
		\caption{A standard Legendrian Hopf link $\Lambda=\La_h=\La_1\cup\La_2$, in red, each component $\La_i$ of which is the zero section of a regular fiber for the standard Lefschetz fibration $f:\C^2\lr\C$.}
		\label{fig:fibered_Lambda}
	\end{figure}
\end{center}

Now through the symplectization of $(\partial S, \ker(\lambda|_{\partial S})\cong (S^3,\xi_{st})$, we argue that this star-shaped region $S$ is exact symplectomorphic to the standard Darboux ball:
$$(S,d\lambda|_S)\cong (B^4_1,\omega_0=d\lambda_0),$$
with coordinates $(x_1,y_1,x_2,y_2)\in B^4_1$, $\omega_0 = dx_1\wedge dy_1+dx_2\wedge dy_2$, and $\lambda_0=x_1dy_1+x_2dy_2$. Throughout the paper consider all $k$-dimensional balls of radius $r$, $B^k_r\sse \C^n$, to be centered at the origin in $\C^n$. For our setting, we want to think of our Lagrangian fillings as living in the standard Darboux ball $(B^4_1,\omega_0)$ where $\partial \overline{B^4_1} = S^3$. It will be useful for us to consider the ``outside'' of $S^3$ as well, and so we can think of $(B^4_1,\omega_0)$ as living ambiently in a larger 4-ball of some arbitrarily large radius, say $r=10$, $(B^4_{10},\omega_{0})$. To that end we note that we have the following symplectic identification between $(B^4_{10}, \omega_0)$ and $(\C P^2\sm \ell_\infty,\omega_{FS})$:
\begin{align*}
\varphi:(\C P^2\sm \ell_\infty,\omega_{FS})&\overset{\cong}{\to} (B^4_{10}, \omega_0),\\
[z_1:z_2:1]&\mapsto \frac{10}{\sqrt{1+||z_1||^2+||z_2||^2}}(z_1,z_2),
\end{align*}
with inverse
\begin{align*}
\varphi^{-1}:(B^4_{10}, \omega_0)&\overset{\cong}{\to} (\C P^2\sm \ell_\infty,\omega_{FS}),\\
(\wt{z_1},\wt{z_2})&\mapsto \frac{1}{\sqrt{100-||\wt{z_1}||^2-||\wt{z_2}||^2}}[\wt{z_1}:\wt{z_2}:1].
\end{align*}
Note that the Lefschetz fibration then takes the form
$$(f\circ\varphi^{-1}):(B^4_{10}, \omega_0)\to\C,$$
$$(\wt{z_1},\wt{z_2})\mapsto \frac{z_1z_2}{100-||\wt{z_1}||^2-||\wt{z_2}||^2}.$$

A key aspect of this paper is to establish appropriate fibrations of $(B^4_1,\omega_0)$ compatible with Lagrangian fillings. Much of the theory to establish these fibrations requires us to work in this ambient space of $(\C P^2\sm \ell_\infty,\omega_{FS})$ and then we restrict to the domain $(B^4_1,\omega_0)$. And so we note that the Lefschetz fibration can be expressed as
$$\Tilde{f}:(B^4_{1}, \omega_0)\to B^2_1\sse \C,$$
$$(\wt{z_1},\wt{z_2})\mapsto \big(f\circ \phi^{-1}|_{B^4_1}\big)(\wt{z_1},\wt{z_2})$$

\subsection{Lagrangian Fillings}\label{sec:Lagr_def} By definition, a \textit{Lagrangian submanifold} $\psi:L\to (M,\omega)$ of a symplectic manifold $(M,\omega)$ is a half-dimensional submanifold such that the symplectic form vanishes: $2\dim L = \dim M$ and $\psi^*\omega = 0$. For a choice of Liouville form $\lambda \in \Omega^1(M)$ such that $d\lambda = \omega$, we say a Lagrangian submanifold is \textit{exact} when $\psi^*\lambda$ is an exact 1-form on $L$. An exact \textit{Lagrangian filling} $L$ of a Legendrian link $\La\sse (S^3,\xi_{st}=\ker \alpha_0)$ is an exact Lagrangian submanifold $\psi:L\to (B^4,\omega_0 = d\lambda)$ such that the following asymptotic condition to $\La$ holds: given that the positive and negative ends of $(B^4,d\lambda)$ agree with the positive and negative ends of the symplectization $(S^3\times \R, d(e^t\alpha_0))$ where $t$ is the coordinate on $\R$, then $L$ is asymptotic to $\Lambda$ if there exists $T\in \R$, $T>0$ such that
$$Ends(L):=L\cap \big((T,\infty)\times S^3\big) = (T,\infty)\times \Lambda.$$

This paper is concerned with classifying the exact Lagrangian fillings of the Legendrian Hopf link $\Lambda$ up to a compactly supported \textit{Hamiltonian isotopy}. A Hamiltonian isotopy is a symplectic isotopy $\phi^t_{H_t}:(B^4,\omega_0)\times [0,1]\to (B^4,\omega_0)$ where the smooth family of vector fields that generate the isotopy $X_t:B^4\to TB^4$ is a family of Hamiltonian vector fields. That is, the infinitesimal generator $\iota (X_t)\omega_0$ is exact and equal to $\iota (X_t)\omega_0 = -dH_t$ for a family of smooth functions $H_t:B^4\times [0,1]\to \R$ called \textit{Hamiltonians}. Requiring it to be compactly supported is a necessary condition to establish different isotopy classes. Indeed, there exist examples of non-compactly supported Hamiltonian isotopies of Lagrangian fillings that exchange isotopy classes, see e.g.~ \cite{Casals22_braidloopsinfinitemonodromy}.

Note that that an exact Lagrangian isotopy, i.e.~a smooth path of exact Lagrangian embeddings $L_t\sse (M,\omega)$ in a symplectic manifold, extends to a Hamiltonian isotopy, see e.g.~ \cite[Theorem 3.6.7]{Oh15_SymplTopFloerHom}. In some cases throughout this paper it is more straightforward to construct exact Lagrangian isotopies than global Hamiltonian isotopies.

In the case at hand, the topology of an exact Lagrangian filling $L$ of the Legendrian Hopf link $\Lambda = \Lambda_1 \cup \Lambda_2$ can be related to the Thurston-Bennequin number ($tb$) of the components $\Lambda_1,\Lambda_2$ by \cite[Theorem 1.4]{Chantraine10_LagrConcordance}. Note that both components here are maximal-$tb$ unknots: $tb(\Lambda_i)=-1$, $i\in\{1,2\}$. Now we perform a surgery as in \cite{Gompf98_HandlebodyStein} to $\overline{B^4}$ by attaching 2-handles $D^2\times D^2$ to each link component with the surgery associated to the framing $tb(\Lambda_i)-1=-2$ to obtain a new Stein manifold $X$.

The two Lagrangian disks $D^2\times \{0\}$ now cap off our filling $L$ to get a new closed Lagrangian $L'\sse X$. The surgery description tells us that the self-intersection number $L'$ is
$$L'\cdot L' = \begin{pmatrix}1&1\end{pmatrix}\begin{pmatrix}-2&1\\
1&-2\end{pmatrix}\begin{pmatrix}1\\
1\end{pmatrix} = -2.$$
Now, given that for Lagrangian submanifolds the normal bundle is isomorphic to the tangent bundle through an orientation reversing isomorphism, $NL'\cong TL'$, we have the following equality for the euler classes:
$$L'\cdot L' = e(NL')[L'] = -e(TL')[L'] = -\chi (L').$$
Therefore the Euler characteristic of $L'$ equals $2$, so $L'$ is smoothly a sphere. Hence $L$ is in fact an open cylinder in $B^4$ after removing the two capping disks. In summary, an embedded exact Lagrangian filling of the standard Legendrian Hopf link $\La_h$ must be a Lagrangian cylinder.

\subsection{Constructing Standard Lagrangian Fillings}\label{sec:std_fillings}
Using the standard Lefschetz fibration $\tilde{f}$ as in \Cref{sec:setting} we will construct two exact Lagrangian fillings which will act as standard representatives for the two Hamiltonian isotopy classes, though we will make no claims yet as to why they are not Hamiltonian isotopic.

First, say that a Lagrangian filling $L$ is \textit{compatible} with a fibration $g:(B^4_1,\omega_0)\to \C$ if the restriction of $g$ to $L$ is a smooth $S^1$-bundle over the simple curve $g(L)\sse \C$. 

Given that $\La$ fibers over the map $\tilde{f}$ as discussed in \Cref{sec:setting}, we can construct exact Lagrangian fillings compatible with $\tilde{f}$ in the following way: first choose an embedded open curve $\sigma\sse B^2_1 \sm\{0\} \sse \C$ with endpoints $e^{i\pi/4}=\tilde{f}(\Lambda_1),e^{i7\pi/4}=\tilde{f}(\Lambda_2)$ satisfying a tangency to $\tilde{f}\big((T,\infty)\times\Lambda\big)$ for some $T>0$; i.e. the ends of $\sigma$ take the form $\{e^{i\pi/4}\}\times I$ and $\{e^{i\pi/4}\}\times I$ where $I$ is some open line segment in $\C$. Then let $L_{\sigma}\sse (B^4_1,\omega_0)$ be the smooth $S^1$-bundle over $\sigma$ by choosing the zero section in each of the fibers over $\sigma$. That is, we intersect $\tilde{f}^{-1}(\sigma)$ with the hypersurface $\{||\wt{z_1}||^2-||\wt{z_2}||^2=0\}\sse (B^4_1,\omega_0)$.

\begin{center}
	\begin{figure}[h!]
		\centering
		\includegraphics[scale=.8]{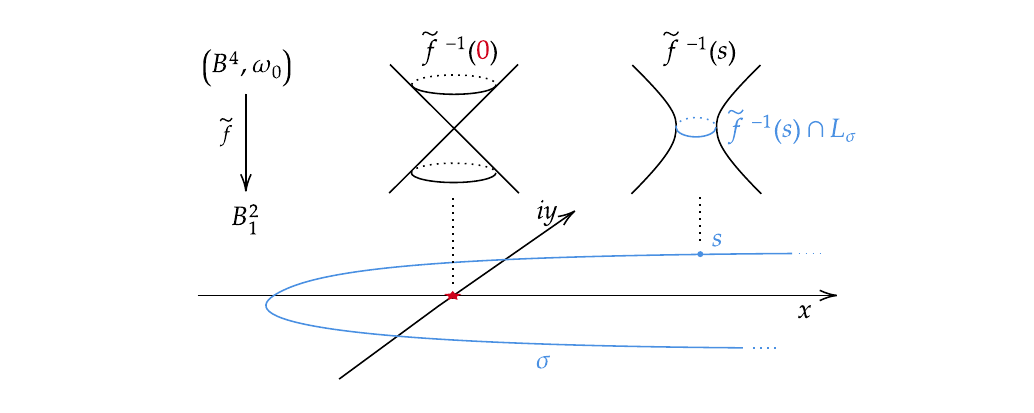}
		\caption{An exact Lagrangian filling $L_\sigma$ of $\Lambda$ compatible with the standard Lefschetz fibration $f$.}
		\label{fig:std_chek_fibr}
	\end{figure}
\end{center}

\begin{lemma}\label{lem:std_fillings_true}
$L_{\sigma}\sse (B^4,\omega_0=d\lambda)$ is an embedded, exact Lagrangian filling of $\La\sse (S^3,\xi_{st})$ for any  embedded curve $\sigma\sse B^2_1\sm \{0\}$.
\end{lemma}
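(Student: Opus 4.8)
The plan is to realize $L_\sigma$ explicitly as an $S^1$-bundle over $\sigma$ in coordinates adapted to the Lefschetz fibration $\tilde f$ of \Cref{sec:setting}, and to read the four defining properties of an exact Lagrangian filling — embeddedness, the Lagrangian condition, exactness, and the asymptotics — off that description; all but the last are short computations.

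\textbf{Parametrization and embeddedness.} Write $\sigma$ in polar form $\sigma(t)=\rho(t)e^{i\alpha(t)}$, $t\in(0,1)$, where $\rho>0$ is smooth and $\alpha$ is a smooth \emph{local} branch of the argument; only $\dot\alpha=\mathrm{Im}(\dot\sigma/\sigma)$, which is globally defined and smooth, will enter the computations. Since $\arg\tilde f=\arg(z_1z_2)$, a point on the hypersurface $H=\{\|\wt z_1\|=\|\wt z_2\|\}$, say $z_1=re^{i\beta}$, lies in $\tilde f^{-1}(\sigma)$ exactly when $z_2=re^{i(\alpha(t)-\beta)}$ for some $t$ and $r$ with $\tilde f=\sigma(t)$; because $\arg\tilde f$ then already matches $\alpha(t)$, this is the monotone scalar condition $|\tilde f|=\rho(t)$ in the fibre radius, which has a unique smooth solution $r=r(t)$. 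Thus $L_\sigma=\tilde f^{-1}(\sigma)\cap H$ is the image of
$$G\colon S^1\times(0,1)\to B^4_1,\qquad G(\beta,t)=\Bigl(r(t)e^{i\beta},\ \tfrac{r(t)}{\rho(t)}\,\sigma(t)\,e^{-i\beta}\Bigr),$$
which involves no choice of branch. Here $\tilde f\circ G(\beta,t)=\sigma(t)$, so $d\tilde f(\partial_t G)=\dot\sigma\neq0$ while $\partial_\beta G=(iz_1,-iz_2)\neq0$ is tangent to the fibre; hence $G$ is an immersion, and it is injective since $\beta=\arg z_1$ and then $t$ are recoverable from $\tilde f(z)\in\sigma$ ($\sigma$ being embedded). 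So $G$ embeds $L_\sigma$ as a cylinder $S^1\times(0,1)$, in agreement with \Cref{sec:Lagr_def}.

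\textbf{Lagrangian and exact.} It is convenient to use the primitive $\lambda:=\tfrac12\sum_j(x_j\,dy_j-y_j\,dx_j)=\tfrac12\sum_j r_j^2\,d\phi_j$ of $\omega_0$, which differs from $\lambda_0$ by an exact $1$-form and hence defines the same notion of exact Lagrangian. Along $G$ we have $r_1=r_2=r(t)$, $\phi_1=\beta$, $\phi_2=\alpha(t)-\beta$, so
$$G^*\lambda=\tfrac12 r(t)^2\,d\beta+\tfrac12 r(t)^2\bigl(\dot\alpha(t)\,dt-d\beta\bigr)=\tfrac12 r(t)^2\dot\alpha(t)\,dt.$$
This has no $d\beta$ component, so $G^*\omega_0=d(G^*\lambda)=0$, i.e.\ $L_\sigma$ is Lagrangian; and since $\tfrac12 r(t)^2\dot\alpha(t)$ is a well-defined smooth function of $t$ alone, $G^*\lambda=dh$ with $h(t):=\tfrac12\int_{t_0}^t r(s)^2\dot\alpha(s)\,ds$ a globally defined function on the cylinder. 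Hence $L_\sigma$ is exact.

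\textbf{Asymptotics, and the main obstacle.} The hypotheses on $\sigma$ say its two ends are radial segments terminating at $e^{i\pi/4}=\tilde f(\Lambda_1)$ and $e^{i7\pi/4}=\tilde f(\Lambda_2)$, so $\dot\alpha\equiv0$ there. Near $S^3=\partial B^4_1$ the fibration $\tilde f$ restricts to $F$ of \Cref{sec:setting}, and since $\arg\tilde f=\arg(z_1z_2)$ is preserved by the Liouville flow, a radial end of $\sigma$ is invariant under the flow in the base; hence the corresponding end of $L_\sigma$ is the Liouville push-off of the zero section of the fibre over $e^{i\pi/4}$ (resp.\ $e^{i7\pi/4}$), which is exactly $\Lambda_1$ (resp.\ $\Lambda_2$). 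Under the collar identification of the positive end of $(B^4_1,\omega_0)$ with $\bigl((T,\infty)\times S^3,d(e^t\alpha_0)\bigr)$ this yields $L_\sigma\cap\bigl((T,\infty)\times S^3\bigr)=(T,\infty)\times(\Lambda_1\cup\Lambda_2)=(T,\infty)\times\La_h$, so $L_\sigma$ is a filling of $\La_h$. I expect this matching with the symplectization collar to be the only genuinely delicate step: it uses the tangency assumption on the ends of $\sigma$ essentially, and requires care with how $\tilde f$ — rather than the homogeneous model $f=z_1z_2$ — interacts with the Liouville vector field near $\partial B^4_1$. Everything else is routine, the one pitfall being that $\alpha=\arg\circ\,\sigma$ is only a local branch when $\sigma$ winds around $0$, which is harmless since only $\dot\alpha$ and the branch-free formula for $G$ are ever used.
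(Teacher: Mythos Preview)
Your proof is correct and complete, but it proceeds along a different line from the paper's. The paper never writes down a parametrization of $L_\sigma$; instead it observes that the hypersurface $H=\{\|\wt z_1\|^2=\|\wt z_2\|^2\}$ has characteristic line field generated by $\frac{d}{dt}(e^{it}\wt z_1,e^{-it}\wt z_2)$, and that $L_\sigma\subset H$ is foliated by the integral circles $\theta\mapsto(e^{i\theta}a,e^{-i\theta}b)$ of this field, which forces $\omega_0|_{L_\sigma}=0$. Exactness is then checked by evaluating the period $\int_{e_0}\lambda=\pi(\|a\|^2-\|b\|^2)=0$ on a single fibre circle, rather than by producing a global primitive. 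Your route---an explicit $G$ and the one-line pullback $G^*\lambda=\tfrac12 r^2\dot\alpha\,dt$---is more elementary and has the bonus of giving embeddedness for free and an explicit primitive $h$; the paper's characteristic-distribution argument is shorter and more conceptual (it would apply verbatim to any curve in the reduced space $H/S^1$), but it leaves embeddedness implicit and only tests exactness homologically. The asymptotic step is handled the same way in both, and your caveat about $\tilde f$ versus $f$ near the boundary is appropriate: the paper simply asserts $Ends(L_\sigma)=(T,\infty)\times\Lambda$ from the tangency hypothesis on $\sigma$ without spelling out the collar identification.
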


\begin{proof}
The claim that $L_\sigma$ is a Lagrangian submanifold can be seen through the characteristic distribution of the the hypersurface $\{||\wt{z_1}||^2-||\wt{z_2}||^2=0\}$. The Euclidean gradient of the function $||\wt{z_1}||^2-||\wt{z_2}||^2$ equals $-2i\frac{d}{dt}(e^{it}\wt{z_1},e^{-it}\wt{z_2})$, which tells us that the vector field $\frac{d}{dt}(e^{it}\wt{z_1},e^{-it}\wt{z_2})$ generates the aforementioned characteristic distribution. Given that $L_\sigma$ is foliated by closed integral curves of the form $\theta \mapsto (e^{i\theta} a, e^{-i\theta}b)$ where $(a,b)\in L_\sigma$, it follows that $\omega_0|_{L_\sigma} = 0$.

Now if we fix a generator $e_0$ of $H_1(L_\sigma)$ in the class $\theta \mapsto (e^{i\theta} a, e^{-i\theta}b)$ for some fixed choice of $(a,b)\in L_\sigma$, then $||a||^2-||b||^2=0$ and we see that
$$\int_{e_0}\lambda = \pi(||a||^2-||b||^2)=0.$$
As such $L_\sigma$ is an exact Lagrangian submanifold of $(B^4,d\lambda)$.

Finally we check that the asymptotic condition to $\Lambda$ is satisfied. Given that we chose the base curve $\sigma$ so that its ends look like $\{e^{i\pi/4}\}\times I$ and $\{e^{i7\pi/4}\}\times I$ where $I$ is some arbitrary open interval, we see that
$$Ends(L_\sigma) = \tilde{f}^{-1}\big(\tilde{f}(\Lambda)\times I\big)\cap \{||\wt{z_1}||^2-||\wt{z_2}||^2=0\} = (T,\infty)\times \Lambda,$$
for some $T>0$, which completes the claim.
\end{proof}

Now the construction above defines a whole family of exact Lagrangian fillings corresponding to the family of curves satisfying this asymptotic condition to intervals near $\tilde{f}(\Lambda)$. For the two representatives we want to construct, we choose $\sigma_1$ to be a curve which goes around zero and $\sigma_2$ to be a curve which does not. Formally, if we let $C\sse S^1$ be an arc from $e^{i\pi/4}$ to $e^{i7\pi/4}$ through a clockwise rotation, define the \textit{extended winding number} of a curve $\sigma$ to be the winding number of the closed curve $\sigma\cup C$ around $\{0\}\in \C$. Now fix $\sigma_{Cl}$ to be some curve with extended winding number 1 and $\sigma_{Ch}$ so that it has extended winding number 0.

\begin{center}
	\begin{figure}[h!]
		\centering
		\includegraphics[scale=.8]{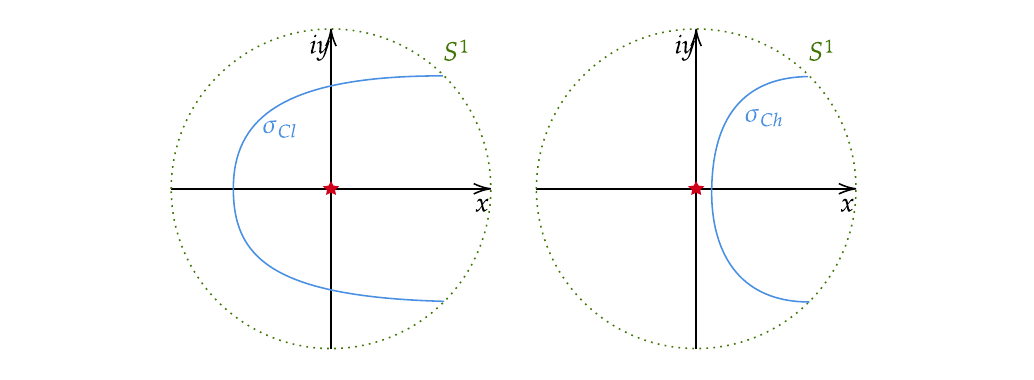}
		\caption{(Left) A choice of $\sigma_{Cl}$, a curve with extended winding number 1, and (Right) a choice of $\sigma_{Ch}$, a curve with extended winding number $0$. These yield two embedded exact Lagrangian fillings of the standard Legendrian Hopf link $\La_h$.}
		\label{fig:Cl_and_Ch_curves}
	\end{figure}
\end{center}

And so we have two candidates for our Hamiltonian isotopy classes of fillings: $L_{Cl}:=L_{\sigma_{Cl}}$ and $L_{Ch}:=L_{\sigma_{Ch}}$ which we will call exact Lagrangian fillings of \textit{Clifford}-type and \textit{Chekanov}-type respectively. This naming convention is in reference to the two Hamiltonian isotopy classes of Lagrangian tori in $(\C^2,\omega)$, see e.g. \cite{Chekanov96_LagrangianTori}, as well as the related notion from \cite{DR19_WhitneySphere} of Clifford-type and Chekanov-type \textit{tori} in the space $(\C P^2\sm (\ell_\infty \cup f^{-1}(1)),\omega_{FS})$.

\subsection{At Least Two Isotopy Classes of Fillings}\label{sec:sheafClassification}
As mentioned in \Cref{sec:intro}, certain algebraic viewpoints have been used establish that there are \textit{at least} two embedded, exact Lagrangian fillings of $\Lambda$. Using the Legendrian differential graded algebra \cite{Chekanov97_LDGA} with $\Z_2$ coefficients, \cite[Section 8]{Ekholm12_LagrangianCobord} established that there are at least a Catalan number, $C_n$, of embedded Lagrangian fillings for a $(2,n)$-torus link through the functorial, injective map that sends isotopy classes of embedded, exact Lagrangian fillings to augmentations of $\Lambda$. This result for $(2,n)$-torus links was strengthened by \cite{Pan17_ExactLagrFillings} to the case of $\Z_2[H_1(L)]$ coefficients as well as \cite{Casals22_braidloopsinfinitemonodromy} for $\Z[H_1(L)]$ coefficients.

Similar results have been obtained from the sheaf-theoretic perspective such as in \cite{STZ16_LegKnotsandSheaves} or \cite{Casals22_LegWeaves}. For the specific example of the Legendrian Hopf link, see \cite[Examples 2.5.2 and 2.5.3]{Casals24_Microlocal} where Lagrangian fillings can be interpreted as a toric chart within a
quotient stack of sheaves of singular support with microlocal rank 1. 

In all of these examples, two distinct isotopy classes of embedded exact Lagrangian fillings are established for the Legendrian Hopf link. In this paper we aim to prove that every embedded exact Lagrangian filling of $\La$ is Hamiltonian isotopic to one of the standard fillings $L_{Ch}$ or $L_{Cl}$ as defined in \Cref{sec:std_fillings}, which in turn with the results mentioned above, classifies all possible embedded fillings for $\La$. The proof will rely on compatible fibrations, though the procedure will be the opposite of what we have done to construct these standard fillings. Instead of being given a fibration and constructing only a class fillings, we will be given an arbitrary filling and aim to construct a compatible fibration. This process uses the theory of pseudoholomorphic curves and Symplectic Field Theory; in particular, a neck stretching procedure, also called a splitting construction (see \Cref{sec:neckstretch} for the specific description). This type of argument has been used to establish obstructions to Lagrangian embeddings such as in \cite[Theorem 1.7.5]{EGH00_SFT}, \cite{Hind04_LagrSpheresS2xS2}, \cite{Hind14_Polydisks}. Our main inspiration is \cite{DR19_WhitneySphere} in which two Hamiltonian isotopy classes of Lagrangian tori are distinguished in the space $(\C P^2\sm (\ell_{\infty}\cup f^{-1}(1)),\omega_{FS})$ through the use of a compatible pseudoholomorphic conic fibration obtained by stretching the neck.

\subsection{Pseudoholomorphic Conic Pencils}\label{conicpencil}
Throughout this section we will work in the compactification of $(B_{10}^4,\omega_0)$ to $(\C P^2, \omega_{FS})$ (more details for this compactification are explained in  \Cref{sec:fibration}). This is motivated by the fact that the theory of pseudoholomorphic curves is more developed and well understood in this compact setting and this will allow us to use already-established results.

Assume we are given a tame almost complex structure $J$ on $(\C P^2, \omega_{FS})$ that coincides with the standard complex structure $i$ near the line at infinity $\ell_\infty\sse \C P^2$. We call a pseudoholomorphic \textit{line} a degree one pseudoholomorphic curve inside $\C P^2$ and a pseudoholomorphic \textit{conic} a degree two curve. 

In \cite{Gromov85_PseudoholCurves} Gromov established a major result in symplectic topology in which he showed that $\C P^2$ is foliated by pseudoholomorphic lines for any choice of tame almost complex structure. The key idea behind establishing similar results for higher degree curves relies on the understanding of how curves can degenerate while deforming the almost complex structure. Gromov's compactness theorem \cite{Gromov85_PseudoholCurves} tells us that a sequence of (finite energy) pseudoholomorphic curves in a fixed homology class limits to a pseudoholomorphic curve in the same homology class but now with the possibility of having a finite set of bubbles. In our case, \cite[Lemma 4.2]{DR19_WhitneySphere}  tells us that a pseudoholomorphic conic, is either: a smoothly embedded sphere; a nodal sphere that is the union of two pseudoholomorphic lines; or a double-branched cover of a pseudoholomorphic line. Dimitroglou-Rizell discovered that when you consider pseudoholomorphic conics with a tangency restriction to mimic the curves coming from the standard Lefschetz fibration, an analogous result to Gromov's foliation by lines exists, now for conics.

First we introduce some notation: fix $q_1=[1:0:0],q_2=[0:1:0]\in \ell_\infty\sse \C P^2$ and consider the tangent vectors $v_i\sse T_{q_i}\C P^2$, $i=1,2$, to the lines
$$\ell_1 = \{[Z_1:0:Z_3]\in \C P^2\},\quad \ell_2=\{[0:Z_2:Z_3]\in \C P^2\}.$$

Now denote by $\mathcal{M}_J(v_1,v_2)$ the moduli space of $J$-holomorphic conics satisfying the two tangency conditions to $v_i$ at $q_i\in \ell_\infty$. Note that the algebraic conics coming from the standard Lefschetz fibration $f$ as described in \Cref{sec:setting} satisfy these tangency conditions and therefore are contained in $\mathcal{M}_{J_0}(v_1,v_2)$ where $J_0$ is the standard complex structure $J_0=i$.

Now for a smooth family of tame almost complex structures $J_t$ on $(\C P^2,\omega_{FS})$, all of which are standard near $\ell_\infty$, denote $C_{nodal}^{J_t}\sse \C P^2$ the union of the two unique $J_t$-holomorphic lines satisfying the tangencies to $v_i$. The claim that there are precisely two such lines follows from Gromov's foliation by lines of $\C P^2$ \cite{Gromov85_PseudoholCurves}. Given that $\ell_\infty$ is holomorphic under our assumption, $C_{nodal}^t$ intersects the line transversely precisely at the two points $q_1,q_2\in \ell_\infty$. Thus the nodal point $x_{nodal}^t$ of $C_{nodal}^t$ is contained inside $\C P^2\sm \ell_\infty$.

We use the following theorem in our pursuit of obtaining a symplectic fibration compatible with a given arbitrary exact Lagrangian filling of $\Lambda=\La_h$:

\begin{thm}[Theorem 4.3 in \cite{DR19_WhitneySphere}]\label{thm:DRfibr} The conics in $\mathcal{M}_{J_t}(v_1,v_2)$ form a smooth foliation of $\C P^2\sm (\ell_\infty \cup x_{nodal}^t)$ with symplectic leaves and a unique nodal fiber $C_{nodal}^t$. Furthermore, there is an induced family of symplectic fibrations $f_{J_t}: (\C P^2\sm \ell_\infty,\omega_{FS})\to \C$ which are submersive outside of the singularity of the nodal conic, and also depend smoothly on the parameter $t$.
\end{thm}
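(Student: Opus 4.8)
Following \cite{DR19_WhitneySphere}, the plan is to run the pseudoholomorphic-curve argument that goes back to Gromov's foliation of $\C P^2$ by $J$-holomorphic lines, now for the constrained conic problem. Fix $t$ and write $J=J_t$; let $L\in H_2(\C P^2)$ be the line class and $A=2L$ the conic class, so $c_1(A)=6$ and an embedded genus-$0$ representative satisfies $A\cdot A = c_1(A)-2 = 4$ by adjunction. A dimension count --- algebraically, conics form $\C P^5$ and each tangency condition cuts two complex dimensions --- shows the unparametrised moduli space of genus-$0$ $J$-holomorphic conics smoothly tangent to $v_i$ at $q_i$ ($i=1,2$) has expected real dimension $2$, matching a $2$-parameter family of leaves filling the $4$-manifold $\C P^2$.

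\textbf{Regularity and compactness.} Since the conclusion is needed for every $t\in[0,1]$ and not just a generic one, I would invoke automatic transversality in dimension four (Hofer--Lizan--Sikorav, via the positive-index criterion): the smooth members of $\mathcal{M}_J(v_1,v_2)$ are embedded spheres, and because $c_1(A)=6>0$ both the Cauchy--Riemann operator and the two tangency constraints are automatically cut out transversally, while the two lines composing the nodal member are embedded with $c_1(L)=3>0$ and hence also regular, with standard gluing describing a neighbourhood of $C_{nodal}^{J}$ in the moduli space. Thus $\mathcal{M}_{J_t}(v_1,v_2)$ is a smooth $2$-manifold diffeomorphic to $\C$ for each $t$, and $\bigsqcup_{t\in[0,1]}\mathcal{M}_{J_t}(v_1,v_2)$ is a smooth family over $[0,1]$. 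For compactness I would apply Gromov compactness in the closed manifold $\C P^2$: a sequence of conics in $\mathcal{M}_{J_{t_n}}(v_1,v_2)$ with $t_n\to t_0$ subconverges to a stable genus-$0$ $J_{t_0}$-configuration in class $A$, which by \cite[Lemma 4.2]{DR19_WhitneySphere} --- unless it escapes to $\ell_\infty$ --- is a smooth embedded conic, a nodal union of two lines, or a double branched cover of a line. The double cover is impossible: its image line would simultaneously be the $J_{t_0}$-line through $q_1$ tangent to $v_1$ and the one through $q_2$ tangent to $v_2$, and these are distinct (each meets $\ell_\infty$ only at $q_1$, resp.\ $q_2$). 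A nodal limit must be $\ell_1^{t_0}\cup\ell_2^{t_0}=C_{nodal}^{t_0}$, since a line containing a branch smoothly tangent to $v_i$ at $q_i$ is forced to be $\ell_i^{t_0}$, and its node $x_{nodal}^{t_0}$ lies off $\ell_\infty$ as recalled above. A sequence may instead degenerate by running out towards $\ell_\infty$ and Gromov-converging to $2\ell_\infty$; this I would read as escape to infinity in the leaf space $\C$, which is precisely why the foliation is of $\C P^2\setminus\ell_\infty$ and not of $\C P^2$.

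\textbf{From moduli to foliation and fibration.} Disjointness of distinct leaves in $\C P^2\setminus\ell_\infty$ comes from positivity of intersections: two distinct smooth members are mutually tangent at $q_1$ and at $q_2$, so each point contributes local intersection $\ge 2$, forcing a total $\ge 4 = A\cdot A$, hence equality and no further intersection; the same count shows a smooth member meets $C_{nodal}^{t}$ only along $\ell_\infty$ and in particular avoids $x_{nodal}^{t}$. That every point of $\C P^2\setminus(\ell_\infty\cup x_{nodal}^t)$ lies on a leaf I would obtain from an open--closed argument: openness of the union of leaves is the regularity step, and by the compactness step a point that is a limit of points on leaves either lies on a genuine leaf or is pushed out to $\ell_\infty$. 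For $J_0=i$ the explicit pencil $\{Z_1Z_2=cZ_3^2\}_{c\in\C}$ realises the whole picture and anchors the argument, and the smoothness in $t$ then propagates it to all $t\in[0,1]$. Each leaf is $J_t$-holomorphic, so $\omega_{FS}$ restricts to it as an area form and the leaves are symplectic. The induced fibration $f_{J_t}$ is the projection to the leaf space, with the identification of that space with $\C$ at $t=0$ transported smoothly in $t$; it is submersive away from $x_{nodal}^t$, has symplectic fibres the leaves, and depends smoothly on $t$.

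\textbf{Expected main obstacle.} The delicate part is making the regularity and compactness hold \emph{uniformly in $t$}: automatic transversality must be arranged for the constrained problem so that the moduli space does not jump as $J_t$ varies, and the compactness analysis must eliminate every unwanted degeneration using only positivity of intersections and the tangency conditions --- in particular preventing the node of a limiting nodal conic from wandering onto $\ell_\infty$, and correctly reading the escape-to-$\ell_\infty$ behaviour as non-compactness of the leaf space rather than a genuine bubbling failure. Granting these, the positivity computation and the symplectic-fibration packaging are routine.
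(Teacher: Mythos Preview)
The paper does not prove this theorem; it is quoted as Theorem~4.3 of \cite{DR19_WhitneySphere} and used as a black box. Your sketch is a faithful outline of the argument there, and the one ingredient the paper does single out --- automatic regularity of every smooth member of $\mathcal{M}_J(v_1,v_2)$ for an arbitrary tame $J$, recorded as \Cref{lem:DR19_lem4.4} and described as ``necessary for proving \Cref{thm:DRfibr}'' --- is precisely the automatic-transversality step you invoke.
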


\section{Pseudo-Holomorphic Fibration Compatible with a given Lagrangian filling}\label{sec:fibration}

Throughout this section let $L\sse (B^4,\omega_{0})$ be an embedded exact Lagrangian filling of the max-$tb$ Legendrian Hopf link $\La\sse(S^3,\xi_{st})$. Our aim in this section is construct a pseudoholomorphic conic fibration that is compatible with such an arbitrary filling $L$, as defined in  \Cref{sec:std_fillings}. We establish the following result:

\begin{thm}\label{lem:compmain}
Let $L\sse (B^4,\omega_{0})$ be an embedded exact Lagrangian filling of the max-$tb$ Legendrian Hopf link $\La\sse(S^3,\xi_{st})$. Then, there exists a tame almost complex structure $J$ on $(B^4,\omega_{0})$ that gives rise to a $J$-holomorphic conic fibration $f_J:(B^4,\omega_{0})\to \C$ that is compatible with $L$. Furthermore, the embedded curve $\sigma:=f_J(L)$ is disjoint from $0=f_J(C_{nodal}^J)\in \C$ and has an extended winding number of $0$ or $1$ around $0\in \C$, and $L$ intersects any fiber over $\sigma$ along an exact Lagrangian curve in the fiber for the standard choice of Liouville form. 
\end{thm}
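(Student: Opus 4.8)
The plan is to follow the strategy of \cite{DR19_WhitneySphere} — where a compatible conic fibration is produced by stretching the neck along a Lagrangian torus — now with the torus replaced by the exact cylinder $L$, and to read off the global fibration from \Cref{thm:DRfibr}. Throughout one works in $(\C P^2\sm\ell_\infty,\omega_{FS})\cong(B^4_{10},\omega_0)$. First I would fix a Riemannian metric $\rho$ on the cylinder $L$ whose closed geodesics are exactly the core circles $\{*\}\times S^1$ — e.g.\ a flat product metric, perturbed near the compact part if needed so that the relevant closed Reeb orbits on $S^*_\rho L$ are non-degenerate (or are treated Morse--Bott) — together with a Weinstein tubular neighbourhood $D^*_\e L$ of $L$, chosen near the cylindrical ends of $L$ to be compatible with the symplectization collar of $(S^3,\xi_{st})$ so that the picture there remains radial and standard. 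Then I would choose a tame almost complex structure $J_0$ on $\C P^2$ that equals the standard $i$ near $\ell_\infty$, equals $i$ near $\partial B^4_1$ (so that $f_{J_0}$ coincides with $\tilde f$ near the boundary), and is adapted to the canonical complex structure on $D^*_\e L$; stretching the neck along $Y:=\partial D^*_\e L\cong S^*_\rho L$ yields a smooth family $J_\tau$, $\tau\in[0,\infty)$, of tame almost complex structures, all standard near $\ell_\infty$, to each of which \Cref{thm:DRfibr} assigns a conic fibration $f_{J_\tau}\colon(\C P^2\sm\ell_\infty,\omega_{FS})\to\C$ with a single nodal fibre $C_{nodal}^{J_\tau}$.

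The heart of the argument is a degeneration analysis as $\tau\to\infty$. Fix $p$ in the compact part of $L$ and let $C_\tau^p\in\M_{J_\tau}(v_1,v_2)$ be the unique $J_\tau$-conic through $p$. By SFT compactness \cite{EGH00_SFT,Gromov85_PseudoholCurves}, as $\tau\to\infty$ the curves $C_\tau^p$ converge to a holomorphic building with levels in $\C P^2\sm L$ (carrying $\ell_\infty$ and the tangency points $q_1,q_2$), in symplectizations $\R\times Y$, and in the completed cotangent bundle $T^*L$ into which $D^*_\e L$ completes, all asymptotic Reeb orbits covering core geodesics of $L$. I would then pin down the shape of this building: since $T^*L$ is exact, a closed component there is constant; and in the cotangent bundle of a flat cylinder every non-constant finite-energy punctured sphere is a branched cover of a trivial cylinder over a single core geodesic (no core geodesic can be capped off by a holomorphic plane, and any connecting cylinder has zero symplectic area because all core geodesics have the same length), so the $T^*L$- and neck-levels are trivial cylinders over core geodesics. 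Feeding this into the area constraint $\int_{C_\tau^p}\omega_{FS}=2A_0$ (with $A_0$ the area of a line), the genus-$0$ condition, and the classification of conic degenerations in \cite[Lemma 4.2]{DR19_WhitneySphere} (a smooth conic, two lines, or a double cover of a line), I expect precisely two surviving combinatorial types of the limit building — according to whether the top-level curve is an irreducible conic or breaks onto a pair of lines near $C_{nodal}^{J_\tau}$ — and in both, reassembly shows that for all large $\tau$ the intersection $C_\tau^p\cap L$ is a single embedded circle, homotopic to the core of $L$; moreover the two types translate respectively into extended winding numbers $0$ and $1$ of the base curve around the nodal value.

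With the delicate step in hand, fix $\tau$ large and conclude as follows. Every conic of $\M_{J_\tau}(v_1,v_2)$ meets $L$ in $\es$ or in a single embedded circle, and these conics foliate $\C P^2\sm(\ell_\infty\cup x_{nodal}^{J_\tau})$; since $L$ is disjoint from $\ell_\infty$, from $x_{nodal}^{J_\tau}$ (a single point, which may be moved off $L$ by a further small perturbation of $J_\tau$), and from $C_{nodal}^{J_\tau}$ — a circle $L\cap C_{nodal}^{J_\tau}\subset C_{nodal}^{J_\tau}\sm\ell_\infty$ would bound a non-constant holomorphic disk inside one of the two affine lines making up $C_{nodal}^{J_\tau}$, contradicting exactness of $L$ in $\C P^2\sm\ell_\infty$ together with the fact that near $\partial B^4_1$ the filling $L$ is radial over $\tilde f(\La)\neq 0$ and so cannot reach $q_1,q_2$ — it follows that $f_{J_\tau}|_L$ has only embedded-circle fibres, hence is a submersion onto the simple curve $\sigma:=f_{J_\tau}(L)\subset\C\sm\{0\}$, i.e.\ a smooth $S^1$-bundle; in particular $f_{J_\tau}$ is compatible with $L$. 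Restricting $J_\tau$ and $f_{J_\tau}$ to $(B^4_1,\omega_0)$ — where $J_\tau$ is standard near the boundary, so $\sigma$ has radial ends at $\tilde f(\La_1),\tilde f(\La_2)$, and $C_{nodal}^{J_\tau}$, $x_{nodal}^{J_\tau}$ sit as in the standard model — the winding dichotomy above gives that $\sigma$ has extended winding number $0$ or $1$ around $0=f_{J_\tau}(C_{nodal}^{J_\tau})$. Finally, for a fibre $K_c:=L\cap f_{J_\tau}^{-1}(c)$ of $f_{J_\tau}|_L$, writing $\lambda_0|_L=dg$ for a primitive (exactness of $L$) gives $\oint_{K_c}\lambda_0=\oint_{K_c}dg=0$, so $K_c$ is an exact Lagrangian curve in the fibre for $\lambda_0$. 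This is exactly the statement of \Cref{lem:compmain}.

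The main obstacle is the degeneration analysis of the middle paragraph: performing the neck-stretching compatibly with the non-compact cylindrical end of $L$ so that the limit building stays controlled near $\partial B^4_1$; handling the Morse--Bott geodesic flow of the flat cylinder, either by perturbing $\rho$ to isolate a shortest closed geodesic or by invoking a Morse--Bott version of SFT compactness; classifying finite-energy punctured holomorphic spheres in the cotangent bundle of a cylinder precisely enough, and running the area/genus/degree bookkeeping with \cite[Lemma 4.2]{DR19_WhitneySphere}, to exclude spurious components; and establishing enough transversality to upgrade ``$C_\tau^p\cap L$ is a union of circles'' to ``a single embedded circle, varying smoothly with $p$'', from which the submersion statement for $f_{J_\tau}|_L$ follows. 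A secondary technical point is to check that passing from $\C P^2$ back to $(B^4_1,\omega_0)$ preserves all the claimed properties, in particular the location of $x_{nodal}^{J_\tau}$ and $C_{nodal}^{J_\tau}$.
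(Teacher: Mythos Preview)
Your overall strategy --- neck-stretch, analyze the limit building, glue back to a foliation by conics via \Cref{thm:DRfibr} --- matches the paper's, but you diverge from it at the single most important technical choice: you stretch around the non-compact cylinder $L$ itself, whereas the paper first \emph{completes $L$ to a closed Lagrangian torus} $\overline{L}\subset\C P^2\sm\ell_\infty$ by capping with a standard fibered piece $L_T$ over an arc $\sigma_T$ outside $B^4_1$, and then stretches around $\overline{L}$. This is not a cosmetic difference. First, it lets the paper invoke SFT compactness for a \emph{closed} contact hypersurface $S^*\overline{L}$, sidestepping entirely the ``non-compact cylindrical end'' obstacle you correctly flag; all the delicate issues you list (control near $\partial B^4_1$, Morse--Bott geodesics on a non-compact cylinder, uniform-in-$p$ estimates) simply disappear. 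Second, the torus completion manufactures a homology class $e_1\in H_1(\overline{L})$ transverse to $e_0=[\text{core of }L]$ and, crucially, a \emph{fixed} standard $J_\infty$-holomorphic conic $C$ sitting over a point of $\sigma_T$, disjoint from $\overline{L}$ and unmoved by the stretch. Positivity of intersection with this fixed $C$ is the engine behind the paper's three structural lemmas: the nodal conic does not break (\Cref{lem:nodaldoesntbreak}), all asymptotics of a broken conic lie in $\Z e_0$ (\Cref{lem:e_0asymptotics}), and --- using exactness of $L$ to kill any would-be top-level cylinder --- the top level consists of exactly two index-$1$ planes asymptotic to $\pm e_0$ (\Cref{lem:twotopplanes}). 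Your replacement for this package, a classification of finite-energy punctured spheres in $T^*(\text{cylinder})$, is not available off the shelf and would need to be proved.

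There is also a gap in your winding-number argument and in your treatment of the nodal fibre. In the paper the winding number does \emph{not} arise from two combinatorial types of limit building; every broken conic has the same shape (two planes, no cylinders), and the $0$/$1$ dichotomy is simply the observation that $\sigma=f_J(L)$ is an embedded arc in $\C\sm\{0\}$ with prescribed radial ends. Disjointness of $L$ from $C_{nodal}^{J}$ is not argued, as you attempt, by bounding a circle in an affine line --- at that stage nothing tells you $L\cap C_{nodal}^{J_\tau}$ is a circle --- but rather by showing the nodal conic cannot break under the stretch (again via positivity of intersection with the auxiliary conic $C$ and the exactness obstruction \Cref{lem:exactgen}), and then quoting \cite[Lemma~5.7]{DR19_WhitneySphere} to Hamiltonian-disjoin $\overline{L}$ from it before running the rest of the argument. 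Finally, the passage from ``two planes with matching asymptotic geodesic'' to ``global smooth conic foliation'' is not automatic: the paper uses an asymptotic evaluation map $\mathcal{M}_J^{A_i}(v_i)\to\Gamma\cong S^1$ and proves it is a diffeomorphism (compactness again via \Cref{lem:exactgen}, surjectivity via automatic transversality \cite{Wendl09_AutoTransversality}, injectivity via asymptotic winding numbers as in \cite[Lemma~5.13]{DR16_LagrIsoOfTori}), then smooths the glued spheres along $\overline{L}$ using \cite[\S5.3]{DR16_LagrIsoOfTori}. None of these steps appear in your outline. The single idea that would unlock your proof is: close $L$ up to a torus first.
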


\begin{center}
	\begin{figure}[h!]
		\centering
		\includegraphics[scale=.8]{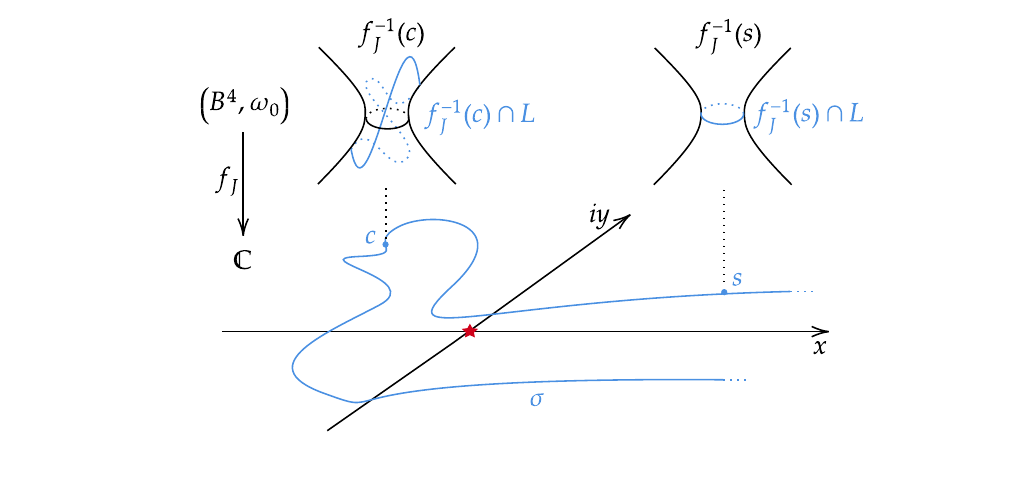}
		\caption{An exact Lagrangian filling $L$ of $\Lambda$ fibered over a curve $\sigma$ in the base of a pseudoholomorphic conic fibration. For any $s\in \text{Ends}(\sigma)$, $L$ intersects the fiber $f_J^{-1}(s)$ precisely along the zero section.\vspace{.75cm}}
		\label{fig:compat_fibr}
	\end{figure}
\end{center}

The goal of this section is to prove \Cref{lem:compmain}. To start, due to the asymptotic condition of any exact Lagrangian filling $L$ to $\La\sse (S^3,\xi_{st})$, $L$ has partial compatibility with the standard Lefschetz fibration. Formally we have:

\begin{lemma}\label{lem:compends} The ends of $L$, $Ends(L)$, are compatible with the standard Lefschetz fibration $\tilde{f}:(B_1^4,\omega_0)\to \C$ where $\sigma_{Ends}:=\tilde{f}(Ends (L))$ are two disjoint properly embedded open curves, one near $\tilde{f}(\Lambda_1)=e^{i\pi/4}\in \C$ and the other near $\tilde{f}(\Lambda_2)=e^{i7\pi/4}\in \C$. Furthermore, $L$ intersects any of the fibers $f^{-1}(s)$ for $s\in \sigma_{\text{Ends}}$ precisely along the zero section.
\end{lemma}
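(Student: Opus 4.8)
The plan is to derive \Cref{lem:compends} directly from the asymptotic normal form of $L$ near $S^3$ and the homogeneity of the standard Lefschetz fibration, with no appeal to pseudoholomorphic curves. I would first record the two inputs from \Cref{sec:setting}. (a) Each component $\La_i$ is the zero section of the cylindrical fiber over $\tilde{f}(\La_i)$; equivalently, $\tilde{f}(\La_1)=\{e^{i\pi/4}\}$ and $\tilde{f}(\La_2)=\{e^{i7\pi/4}\}$ are single points and $\La_i\sse\{\|\wt z_1\|=\|\wt z_2\|\}$. (b) Since $\tilde{f}$ is the map $(z_1,z_2)\mapsto z_1z_2$ transported through the fiber-preserving identifications of \Cref{sec:setting}, it is compatible with radial rescaling in the weak sense that, writing $\phi^\tau$ for the time-$\tau$ flow of the Liouville vector field of $(B^4_1,\omega_0)$, one has $\tilde{f}(\phi^\tau(x))=\rho_x(\tau)\,\tilde{f}(x)$ with $\rho_x$ positive, strictly increasing, $\rho_x(0)=1$, and with $\rho_x$ independent of $x$ for $x\in S^3$; moreover $\phi^\tau$ preserves the locus $\{\|\wt z_1\|=\|\wt z_2\|\}$. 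In particular $\tilde{f}$ carries every Liouville trajectory into the open radial ray through its image.

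Next I would feed in the asymptotic condition: near $\La$, $L$ is a Liouville cone over $\La$, so on a collar of $S^3=\partial B^4_1$ (on which $\tilde{f}$, or its evident extension, is defined) one has $Ends(L)=\bigcup_{\tau\in I}\phi^\tau(\La)$ for a suitable parameter interval $I$ with right endpoint the $S^3$-level, the disjoint union of the cone $Ends_1(L)$ over $\La_1$ and the cone $Ends_2(L)$ over $\La_2$. Applying $\tilde{f}$ and using (a), (b), the set $\tilde{f}(Ends_i(L))=\{\rho(\tau)\tilde{f}(\La_i):\tau\in I\}$ is an embedded arc contained in the radial ray $\R_{>0}\tilde{f}(\La_i)$, with its single endpoint $\tilde{f}(\La_i)$ on $\partial B^2_1$, hence properly embedded; the two arcs lie on the distinct rays through $e^{i\pi/4}$ and $e^{i7\pi/4}$ and are therefore disjoint. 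Setting $\sigma_{Ends}:=\tilde{f}(Ends(L))$ then gives exactly the asserted pair of curves, one near $e^{i\pi/4}=\tilde{f}(\La_1)$ and one near $e^{i7\pi/4}=\tilde{f}(\La_2)$.

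It remains to identify the fibers along these arcs. Fix $s=\rho(\tau_0)\,e^{i\pi/4}\in\tilde{f}(Ends_1(L))$. The only part of $Ends(L)$ lying over $s$ is $\phi^{\tau_0}(\La_1)$, since $Ends_2(L)$ maps into the other ray and $\rho$ is injective on $I$; hence $\tilde{f}|_{Ends_i(L)}$ is a smooth — in fact trivial — $S^1$-bundle over $\tilde{f}(Ends_i(L))$, i.e.\ $Ends(L)$ is compatible with $\tilde{f}$, and $L\cap\tilde{f}^{-1}(s)=\phi^{\tau_0}(\La_1)$. Finally, because $\La_1\sse\{\|\wt z_1\|=\|\wt z_2\|\}$ and $\phi^{\tau_0}$ preserves that locus, $\phi^{\tau_0}(\La_1)=\tilde{f}^{-1}(s)\cap\{\|\wt z_1\|=\|\wt z_2\|\}$, which is by definition the zero section of the regular fiber $\tilde{f}^{-1}(s)$ (cf.\ the construction of the $L_\sigma$ in \Cref{sec:std_fillings}). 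The identical argument over $\tilde{f}(Ends_2(L))$ completes the proof.

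I expect the only genuinely delicate point to be fact (b): verifying that $\tilde{f}$, assembled via the compactification $\C P^2\sm\ell_\infty$ and the explicit charts $\varphi,\varphi^{-1}$, really does send radial Liouville trajectories to radial rays and is constant on each $\La_i$. This is bookkeeping across the several models of $B^4_1$ (the star-shaped domain of \Cref{sec:setting} in $\C^2$, the unit ball, and the affine chart of $\C P^2$) together with the degree-two homogeneity of $z_1z_2$, and involves no hard analysis — as one expects of a preparatory lemma.
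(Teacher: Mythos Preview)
Your proposal is correct and follows essentially the same approach as the paper: both derive the lemma directly from the asymptotic condition $Ends(L)=(T,\infty)\times\Lambda$ together with the fact that $\Lambda$ was set up as a fibered link sitting on the zero sections. The paper's own proof is considerably terser, leaving the bookkeeping you spell out in (b) --- the interaction of the Liouville flow with $\tilde f$ and with the locus $\{\|\wt z_1\|=\|\wt z_2\|\}$ --- entirely implicit.
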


\begin{proof}
  The first statement is a direct consequence of the asymptotic requirement of a Lagrangian filling. We know $Ends(L)=(T,\infty)\times \Lambda$ for some $T>0$, so its image under the map becomes
  $$Ends(\sigma)=\tilde{f}(Ends(L)) = \tilde{f}((T,\infty)\times \Lambda) = (\tilde{f}(\Lambda_1)\times I_1)\cup (\tilde{f}(\Lambda_2)\times I_2),$$
  for some open line segments $I_1,I_2\sse \C$.

  The second statement follows from the fact that $\Lambda$ was chosen to be a fibered link with respect to this fibration, intersecting the fibers along the zero section.
\end{proof}

\begin{remark}\label{rmk:defineM}
Following this, let us define $M:=\overline{B^4_{1-\delta}}\sse B^4_1$ where $\delta>0$ is chosen so that $L\sm Ends(L)$ is contained inside $M$. Our goal is to deform the
fibration within this region so that we obtain compatability, while maintaining our necessary asymptotic condition to $\Lambda$. 
\end{remark}

\subsection{Preliminaries}\label{sec:prelim} A \textit{neck stretching procedure} (also called a splitting construction) is a useful tool in Symplectic Field Theory which can be used to study the topology of Lagrangian embeddings in symplectic manifolds, see e.g.~\cite{MR3157146,EGH00_SFT}. For the case at hand, we want to use it to deform our conic fibration in order to obtain a compatible pseudoholomorphic conic fibration for any given arbitrary embedded, exact Lagrangian filling $L$ of $\La$. The key idea is that the cosphere bundle of a Lagrangian submanifold $P\sse (\C P^2,\omega_{FS})$ is a contact hypersurface that splits $(\C P^2,\omega_{FS})$ into a symplectic manifold diffeomorphic to $(\C P^2\sm P,\omega_{FS})\sqcup (T^*P,d\lambda_{P})$. By stretching the neck, we obtain limits of pseudoholomorphic curves being a \textit{broken pseudoholomorphic curve}, which is a collection of punctured pseudoholomorphic spheres where each punctured sphere appears in one of the connected components of the split manifold, which we call $\textit{levels}$. The punctures correspond to Reeb orbits of the cosphere bundle which, in turn, are in bijection with oriented geodesics of $P$. Hence the components of a broken pseudoholomorphic curve in the top level $\C P^2\sm P$ can be completed by adding chains in $P$ to obtain (un-broken) pseudoholomorphic curves in $(\C P^2,\omega_{FS})$ which intersect $P$ along these geodesics.

In order to make use of some of the results from \cite{DR19_WhitneySphere}, we will complete our filling, a Lagrangian cylinder (see \Cref{sec:Lagr_def}), into a Lagrangian torus and work in the compactification $\C P^2$. As such, many of our results in this section will be similar to those in \cite[Section 5]{DR19_WhitneySphere}, though it is important to note that because of the difference in setting, many of our statements will require new proofs.

\begin{remark}
Instead of using a compactification, another approach would be to consider a pseudoholomorphic fibration of $(B^4,\omega_0)$ similar to \Cref{thm:DRfibr} and run the neck stretching procedure for $L\sse (B^4,\omega_0)$ relative to the boundary. However, this would require a treatment of the asymptotics of the fibers, whereas with the approach of compactifying to $\C P^2$, the fixed points of $q_1,q_2\in \ell_\infty$ for the conics in $\mathcal{M}_J(v_1,v_2)$ (as in \Cref{thm:DRfibr}) and positivity of intersections \cite{McDuff91_Pos_of_int}, yield a bit of a more straightforward approach.
\end{remark}

Given that our filling $L$ is standard near its ends, we can choose to complete $L$ into a Lagrangian torus $\overline{L}\sse (\C P^2\sm \ell_{\infty}, \omega_{FS})$ such that the compatibility with the standard Lefschetz fibration as described in \Cref{lem:compends} extends to this added closure. Let $\sigma_T$ be a simple closed curve in $\C\sm B^2_1$ with endpoints $e^{i\pi/4},e^{i7\pi/4}\in \C$ such that $\sigma_{\text{Ends}}\cup \sigma_T$ is a smooth curve in $\C$. Then since the Lefschetz fibration $f\circ\varphi^{-1}$ is defined on all of $(B^4_{10},\omega_0)$, let $L_T\sse B^4_{10}$ be the smooth $S^1$-bundle over $\sigma_T$ by choosing the zero section in each fiber $f^{-1}(s)$ for all $s\in \sigma_T$. Again, that is defining 
$$L_T:= (f\circ\varphi^{-1})^{-1}(\sigma_T)\cap \{||\wt{z_1}||^2-||\wt{z_2}||^2=0\}.$$
Then call $\overline{L} = L\cup L_T$ the completion into a Lagrangian torus in $(B^4_{10},\omega_0)$. The Lagrangian condition follows from the same argument as in \Cref{lem:std_fillings_true}. Then we can compactify our ambient space into $(\C P^2,\omega_{FS})$ by using the symplectic identification $\varphi^{-1}$ as described in \Cref{sec:setting}, and adding a line at infinity $\ell_\infty$. As such, $\varphi(\overline{L})$ (which by abuse of notation we will call $\overline{L}$) is now a Lagrangian torus in $(\C P^2,\omega_{FS})$ disjoint from $\ell_\infty$.

\begin{center}
	\begin{figure}[h!]
		\centering
		\includegraphics[scale=.8]{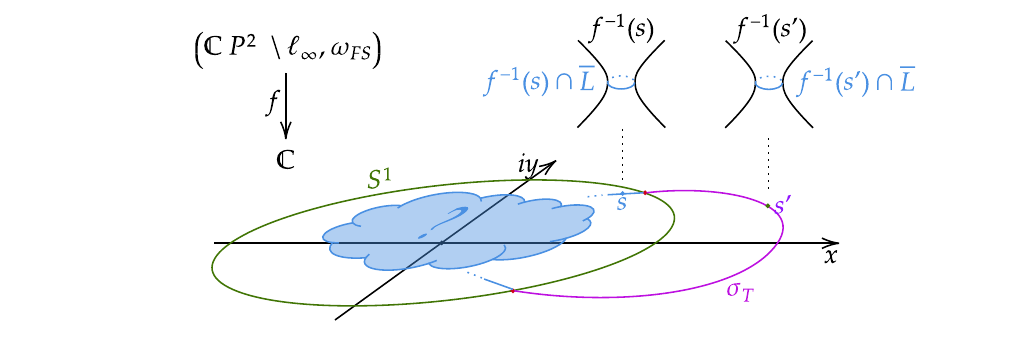}
		\caption{A completion of an embedded, exact Lagrangian filling $L$ into a Lagrangian torus $\overline{L}$ such that the component added is compatible with the standard Lefchetz fibration.}
		\label{fig:L_bar_fibr}
	\end{figure}
\end{center}

\begin{remark}
This torus $\overline{L}$ is not an exact Lagrangian submanifold for our standard choice of Liouville form $\lambda$. A classic result of Gromov \cite{Gromov85_PseudoholCurves} implies that there can be no closed exact Lagrangians in $(\C^2,\omega_0)$. That being said, for the generator $e_0\in H_1(\overline{L})$ induced by $H_1(L)\to H_1(\overline{L})$, the exactness assumption of our Lagrangian filling $L$ is preserved in the sense that there can be no embedded symplectic disks in $(\C P^2\sm \ell_\infty,\omega_{FS})$ with boundary in the homology class $e_0$.
\end{remark}

\begin{lemma}\label{lem:exactgen}
    For any choice of basis $\langle \wt{e_0}\rangle =H_1(L)\cong \Z$, the induced generator $e_0\in H_1(\overline{L})\cong \Z^2$ coming from $\wt{e_0}$ under the inclusion-induced map $H_1(L)\to H_1(\overline{L})$ satisfies that any symplectic disk $(D,\partial D)\to (\C P^2\sm \ell_{\infty}, \overline{L})$ with boundary in the homology class $e_0$ has symplectic area equal to $0$. Hence, there are no embedded symplectic disks in $\C P^2\sm \ell_{\infty}$ that bound $\overline{L}$ along a curve in the homology class of $e_0$.
\end{lemma}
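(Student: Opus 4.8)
The plan is to compute the symplectic area of a disk with boundary on $\overline{L}$ in the class $e_0$ by deforming it to a disk whose boundary lies entirely in the ``standard'' part $L_T$ of the torus, where everything is explicit. First I would observe that $e_0$ is the class of the circle fiber $\theta\mapsto(e^{i\theta}a,e^{-i\theta}b)$ of the $S^1$-bundle $\overline{L}\to\sigma_{\mathrm{Ends}}\cup\sigma_T$, since this is precisely the generator of $H_1(L)$ coming from the cylinder structure established in \Cref{sec:Lagr_def}, and the inclusion $H_1(L)\to H_1(\overline L)$ sends it to the corresponding fiber class on the torus. In particular $e_0$ is represented by a loop $\gamma_0$ lying in a single fiber $f^{-1}(s_0)$ over a point $s_0\in\sigma_T$, i.e. over a point with $\|\wt z_1\|=\|\wt z_2\|$, outside of $B^2_1$.

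Next I would argue that the symplectic area of any disk $(D,\partial D)\to(\C P^2\sm\ell_\infty,\overline L)$ with $[\partial D]=e_0$ is a homotopy invariant: two such disks differ by a sphere in $\C P^2\sm\ell_\infty=B^4_{10}$, which has vanishing $\pi_2$ (it is a ball), hence differ by a cylinder in $\overline L$, which has zero area since $\overline L$ is Lagrangian. So it suffices to compute the area for one convenient disk. Taking $\partial D=\gamma_0$ as above, one natural filling is the Lefschetz-fibration ``slice'': inside the fiber $f^{-1}(s_0)\cong(T^*S^1\times\{\text{pt}\})$ or more concretely inside the characteristic leaves of $\{\|\wt z_1\|^2-\|\wt z_2\|^2=0\}$, the loop $\gamma_0$ bounds a disk and the computation
$$\int_{\partial D}\lambda_0=\pi\big(\|a\|^2-\|b\|^2\big)=0$$
already appears verbatim in the proof of \Cref{lem:std_fillings_true}. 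Since $\C P^2\sm\ell_\infty$ is exact with primitive $\lambda_0$ (equivalently $\lambda$ under $\varphi$), Stokes gives $\int_D\omega_{FS}=\int_{\partial D}\lambda_0=0$ for any disk in this homotopy class — here one uses that a single choice of $\gamma_0$ in a Lagrangian bounds a disk with $\lambda$-period zero, and all other choices of representative of $e_0$ on $\overline L$ and all other fillings give the same area by the invariance step. The final sentence of the lemma is then immediate: a symplectic disk has strictly positive area, so none can bound $\overline L$ in the class $e_0$.

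The main obstacle is the homotopy-invariance/reduction step: one must be careful that ``disk with boundary in the \emph{homology} class $e_0$'' is weak enough that the area is not literally a homotopy invariant on the nose, so the argument must run through $H_1(\overline L)$-periods of $\lambda$. Concretely, the clean statement is that for a closed Liouville domain with exact $\omega=d\lambda$, the area of a relative disk $(D,\partial D)\to(M,\overline L)$ depends only on $[\partial D]\in H_1(\overline L)$ via $\int_{[\partial D]}\lambda|_{\overline L}$ (this uses $H_2(M)=0$, true for $M=B^4_{10}$); then one checks $\int_{e_0}\lambda|_{\overline L}=0$ using the explicit $S^1$-fiber and the computation $\pi(\|a\|^2-\|b\|^2)=0$ on the hypersurface $\{\|\wt z_1\|^2=\|\wt z_2\|^2\}$, exactly as in \Cref{lem:std_fillings_true}. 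I would also remark, for completeness, that this is the sense in which the exactness of the original filling $L$ is inherited by $\overline L$: even though $\overline L$ is not globally exact (no closed exact Lagrangian exists in $\C^2$ by Gromov), the one period that matters — the one in the image of $H_1(L)$ — still vanishes.
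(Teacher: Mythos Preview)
Your proposal is correct, and the clean version you give in the final paragraph is exactly the right argument: by Stokes and $H_2(B^4_{10})=0$, the area of any such disk equals the $\lambda$-period over $e_0$, and you then evaluate this period on an explicit representative in $L_T$ to get $\pi(\|a\|^2-\|b\|^2)=0$.

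The paper's proof is shorter and takes a different route to the same period computation. Rather than pushing the representative into the standard part $L_T$, it pushes it into $L$ itself and invokes exactness directly: since $\lambda|_L = dg$ for some function $g$, the chain of equalities is
\[
\int_D \omega_{FS} \;=\; \int_{\partial D}\lambda \;=\; \int_{\wt{e_0}} \lambda|_L \;=\; \int_{\wt{e_0}} dg \;=\; 0.
\]
So the paper uses the exactness hypothesis on $L$ as the punchline, whereas your argument never touches it: you only need that $e_0$ is the $S^1$-fiber class (which follows from the cylinder topology and the asymptotic condition) and the explicit construction of $L_T$ on the hypersurface $\{\|\wt z_1\|^2=\|\wt z_2\|^2\}$. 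In that sense your route is slightly more robust---it would go through for any Lagrangian cylinder filling with the correct asymptotics---while the paper's route is the one-line conceptual explanation of \emph{why} this lemma is the shadow of exactness on the completed torus.

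One small wrinkle in your middle paragraph: the loop $\gamma_0$ (the zero section of a fiber $f^{-1}(s_0)\cong T^*S^1$) does not bound a disk \emph{inside that fiber}, so the ``Lefschetz-fibration slice'' you first describe is not actually a disk. You already caught and fixed this yourself in the final paragraph by abandoning the explicit disk and working purely with periods, which is the right move.
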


\begin{proof}
    Since $L$ is an exact Lagrangian submanifold in $(B^4_{10},\omega_0)$ for the Liouville form $\lambda$, $\lambda|_L$ is an exact 1-form on $L$; say $\lambda|_L = dg$ for some smooth function $g:L\to \R$. Now using the symplectomorphism $(B^4,\omega_0)\cong(\C P^2\sm \ell_{\infty},\omega_{FS})$ as described in \Cref{sec:setting} as well as two applications of Stokes' Theorem, we can compute the symplectic area of any such disk $D$ directly:
    $$\int_D \omega_{FS} = \int_{\varphi(D)} (\varphi^{-1})^*\omega_{FS} = \int_{\varphi(D)} \omega_0 = \int_{e_0} \lambda =\int_{\wt{e_0}} \lambda|_L = \int_\emptyset g = 0.$$ 
\end{proof}

Now we discuss the geometric setup behind the neck stretching procedure. First fix coordinates $(\theta_1,\theta_2,p_1,p_2) = (\boldsymbol{\theta},\mathbf{p})\in T^* \mathbb{T}^2 = (S^1)^2\times \R^2$ where $\boldsymbol{\theta}$ represent the angle coordinates on $(S^1)^2$ and $\mathbf{p}$ represent the standard coordinates on $\R^2$. The standard Liouville one-form $\lambda_{\mathbb{T}^2}\in \Omega^1(T^*\mathbb{T}^2)$ can then be expressed as $\lambda_{\mathbb{T}^2}=p_1 d\theta_1+p_2 d\theta_2,$
with $d\lambda_{\mathbb{T}^2}$ the canonical symplectic form on $T^* \mathbb{T}^2$. Also for the standard flat metric on $\mathbb{T}^2$ inherited from the Euclidean metric on its covering space $\R^2$, for any $r>0$ we can consider the open co-disk bundle and its corresponding co-sphere bundle
$$T^*_r \mathbb{T}^2 = \{||\mathbf{p}||<r\} \sse T^* \mathbb{T}^2,$$
$$S^*_r \mathbb{T}^2 = \partial \overline{T^*_r \mathbb{T}^2} =  \{||\mathbf{p}||=r\}.$$
Note that the hypersurface $S^*_r \mathbb{T}^2$ is in fact a contact manifold with the pullback $\alpha_0 :=\lambda_{\mathbb{T}^2}|_{T(S^*_r \mathbb{T}^2)}$ being a contact form on the hypersurface. Identifying $S^* \mathbb{T}^2 = (S^1)^2\times S^1 \sse (S^1)^2 \times \R^2$ with angular coordinate $\psi$ for the third $S^1$ factor allows us to write the contact form as $\alpha_0=\cos(\psi)d\theta_1+\sin(\psi)d\theta_2$. The Reeb vector field $R$ on $S^* \mathbb{T}^2$ associated to this choice of contact form is uniquely determined by the equations $i_R\alpha_0 = 1$ and $i_R d\alpha_0 = 0$. Thus, the Reeb vector fields in this case are given by $R=\cos{\psi}\partial_{\theta_1}+\sin{\psi}\partial_{\theta_2}$.

A standard fact \cite{Geiges08_IntroContact} tells us that the canonical projection $S^* \mathbb{T}^2\to \mathbb{T}^2$ induces a one-to-one correspondence between Reeb orbits of R and oriented geodesics on $\mathbb{T}^2$ with respect to the standard flat metric on the torus. From the vector field equation above we see that these periodic Reeb orbits form 1-dimensional manifolds corresponding to $\psi$ for which $\tan(\psi)\in \Q\cup \{\infty\}$. Furthermore, given that the torus is foliated by oriented geodesics, we have that these manifolds are in bijection with the non-zero homology classes $\eta\in H^1(\mathbb{T}^2; \Q)\sm \{0\}$. And so we can denote $\Gamma_\eta \cong S^1$ the family of periodic Reeb orbits that project to oriented geodesics in the homology class $\eta$.

Weinstein's Lagrangian Neighborhood Theorem \cite{Weinstein71_NbhdTheorem} allows us to extend any closed Lagrangian embedding to a symplectic embedding of a neighborhood of the zero section of its cotangent bundle. In our case, we get a symplectic embedding
$$\phi: (T^*_{3\epsilon}\overline{L}, d\lambda_{\mathbb{T}^2}) \to (\C P^2\sm \ell_\infty, \omega_{FS}),$$
$$\phi(0_{\mathbb{T}^2}) = \overline{L},$$
which gives rise to an embedding of the co-sphere bundle $\phi(S^*_{3\epsilon} \mathbb{T}^2)$. This contact hypersurface divides our symplectic manifold $(\C P^2,\omega_{FS})$ into two components so that it is diffeomorphic to the split symplectic manifold
$$(\C P^2 \sm \overline{L}, \omega_{FS})\sqcup (T^* \mathbb{T}^2, d\lambda_{\mathbb{T}^2}).$$

\subsection{Stretching the Neck}\label{sec:neckstretch} 

Now we proceed to describe the neck-stretching procedure around an embedding of the co-sphere bundle of our Lagrangian torus $\overline{L}\sse (\C P^2\sm \ell_{\infty},\omega_{FS})$. Consider a sequence $J_\tau$, $\tau\geq 0$, of tame almost complex structures on $(\C P^2,\omega_{FS})$ satisfying:
\begin{itemize}
    \item[(i)] $J_\tau=i$ outside of the sufficiently large compact subset $M$ as defined in \Cref{rmk:defineM};
    
    \item[(ii)] in a fixed Weinstein neighborhood of $\overline{L}$
    $$\phi:(T^*_{3\epsilon}\mathbb{T}^2,d\lambda_{\mathbb{T}^2})\to (\C P^2\sm \ell_{\infty},\omega_{FS}),$$
    $$\phi(0_{\mathbb{T}^2}) = \overline{L},$$
    the almost complex structure is defined by
    $$J_\tau \partial_{\theta_i} = -\rho_\tau(||\textbf{p}||)\partial_{p_i}$$
    for a function $\rho_\tau:\R_{\geq 0}\to \R_{> 0}$ that satisfies $\rho_\tau(t) = \epsilon$ for $t\leq \epsilon$, $\rho_\tau(t) = t$ for $t\geq 2\epsilon$, and $\int_{\epsilon}^{2\epsilon}\rho(t)dt \geq \tau$; and
    
    \item[(iii)] $J_\tau$ is fixed outside of the Weinstein neighborhood $\phi(T^*_{3\epsilon}\mathbb{T}^2) \subset \C P^2$.
\end{itemize}

From these choices we specify a few important compatible almost complex structures:
    \begin{enumerate}
    \item $J_{\text{std}}$ on $T^*\mathbb{T}^2$ given by $J_{\text{std}} = J_{1}$;
    
    \item $J_{\text{cyl}}$, the cylindrical almost complex structure on 
    $$(T^*\mathbb{T}^2 \sm 0_{\mathbb{T}^2}, d\lambda_{\mathbb{T}^2}) \cong (\R \times S^*\mathbb{T}^2, d(e^t\lambda_{\mathbb{T}^2}|_{S^*\mathbb{T}^2}))$$
    given by $J_{\text{cyl}} \partial_{\theta_i} = -||\textbf{p}||\partial_{p_i}$. This is cylindrical with respect to the contact form $\alpha := \lambda_{\mathbb{T}^2}|_{S^*\mathbb{T}^2}$; and
    
    \item $J_\infty$ on $\C P^2 \sm \overline{L}$ which is fixed so that it coincides with $J_\text{cyl}$ inside the Weinstein neighborhood $\phi(T^*_{3\epsilon}\mathbb{T}^2)$ and with $J_\tau$ for all $\tau$ in the complement $\C P^2 \sm \phi(T^*_{3\epsilon}\mathbb{T}^2)$.
    \end{enumerate}

Stretching the neck is the procedure of taking the limit of the sequence $\{ J_\tau \}$. What essentially happens is the co-sphere bundle $S^*\overline{L}$ stretches out through its symplectization by spinning along the Reeb orbits faster and faster. A parametrization of a $J_\tau$-holomorphic curve is then being deformed in the sense that is will spend more and more time along these Reeb orbits, meaning that as we limit $\tau\to \infty$, the parametrization breaks along these orbits. 

In short, the SFT Compactness Theorem \cite{Bourgeois03_SFT} as well as \cite[Lemma 5.2]{DR19_WhitneySphere} guarantees that the limit of a sequence of $J_\tau$-holomorphic conics (resp. lines) is a broken or unbroken pseudoholomorphic conic (resp. line) which has: a finite number of embedded $J_\infty$-holomorphic punctured spheres in the \textit{Top level} $(\C P^2\sm \overline{L},\omega_{FS})$; a finite number (or zero in the case of an un-broken curve) of embedded $J_{cyl}$-holomorphic punctured spheres in the \textit{Middle level} $(\R \times S^*\mathbb{T}^2, d(e^t\lambda_{\mathbb{T}^2}|_{S^*\mathbb{T}^2}))$; and a finite number (or zero in the case of an un-broken curve) of embedded $J_{std}$-holomorphic punctured spheres in the \textit{Bottom level} $(T^*\mathbb{T}^2,d\lambda_{\mathbb{T}^2})$. In the case of a limit of lines, it is moreover the case that two different components have disjoint interiors.

An important detail to consider in this situation is the Fredholm Index of the punctured spheres. The Atiyah-Singer index theorem \cite{Atiyah-Singer63_IndexThm} tells us that the dimension of the moduli spaces containing these punctured spheres is equal to its Fredholm Index under appropriate regularity conditions.  When we choose a generic almost complex structure $J_\infty$ as we have done above, we guarantee that all $J_\infty$-holomorphic punctured spheres in $\C P^2\sm \overline{L}$ are of non-negative index \cite[Lemma 5.3]{DR19_WhitneySphere}, which allows us to establish the following result:

\begin{lemma}[Lemma 5.4 in \cite{DR19_WhitneySphere}]\label{DR19_lem5.4}
     Assume that there exist no punctured pseudoholomorphic spheres in $\C P^2 \sm \overline{L}$ of negative index. After perturbing $J_\infty$ inside some compact subset of $U\sm \ell_\infty$, where $U\sse \C P^2$ is an arbitrarily small neighborhood of $\ell_\infty$, then any curve that is either
    \begin{enumerate}
    \item a broken line satisfying a fixed point constraint at $q_i\in \ell_\infty$
    \item a broken conic satisfying the tangency conditions to $v_i$ at $q_i\in \ell_\infty$, $i\in\{1,2\}$
    \end{enumerate}
    has a top level consisting of precisely two planes of index one and possibly several cylinders of index 0. Moreover, the components passing through $\ell_\infty$ are transversely cut out when considered with the appropriate fixed point and tangency conditions, respectively.
\end{lemma}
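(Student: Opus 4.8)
\emph{Proof strategy.} The plan is to run a Fredholm index count over the levels of the limiting building, backed up by positivity of intersection with the $J_\infty$-holomorphic curve $\ell_\infty$. Write $\mathbf{u}=(u^{\mathrm{top}},u^{\mathrm{mid}}_{1},\dots,u^{\mathrm{mid}}_{k},u^{\mathrm{bot}})$ for such a broken line or conic, with $u^{\mathrm{top}}$ a (possibly disconnected) punctured $J_\infty$-holomorphic sphere in $(\C P^2\sm\overline{L},\omega_{FS})$, the $u^{\mathrm{mid}}_{j}$ living in the symplectization $(\R\times S^*\mathbb{T}^2,d(e^t\alpha))$, and $u^{\mathrm{bot}}$ in $(T^*\mathbb{T}^2,d\lambda_{\mathbb{T}^2})$; by the SFT compactness theorem \cite{Bourgeois03_SFT} together with \cite[Lemma 5.2]{DR19_WhitneySphere} every component is embedded, and for a limit of lines distinct components have disjoint interiors. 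Since $J_\infty=i$ near $\ell_\infty$ and the perturbation is supported in a compact subset of $U\sm\ell_\infty$, the line $\ell_\infty$ stays $J_\infty$-holomorphic, so positivity of intersections \cite{McDuff91_Pos_of_int} assigns to each component $u'$ of $u^{\mathrm{top}}$ a well-defined number $d_{u'}:=u'\cdot\ell_\infty\ge 0$ with $\sum_{u'}d_{u'}=1$ in case $(1)$ and $=2$ in case $(2)$, and any component meeting the fixed point (resp.\ tangency) constraint at some $q_i$ is forced to have $d_{u'}\ge 1$.

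First I would pin down the available indices. By hypothesis — equivalently by \cite[Lemma 5.3]{DR19_WhitneySphere}, which the preceding genericity of $J_\infty$ arranges — no component of $u^{\mathrm{top}}$ has negative Fredholm index; trivial cylinders have index $0$, every nontrivial middle-level component has index $\ge 1$ (it carries a free $\R$-translation), and every bottom-level component in $T^*\mathbb{T}^2$ has nonnegative index by automatic transversality. Additivity of the Fredholm index under the neck stretching, in the Morse--Bott setting, then yields the schematic relation
$$\operatorname{ind}(\mathbf{u})=\operatorname{ind}(u^{\mathrm{top}})+\sum_{j}\operatorname{ind}(u^{\mathrm{mid}}_{j})+\operatorname{ind}(u^{\mathrm{bot}})+(\text{nonnegative corrections}),$$
while $\operatorname{ind}(\mathbf{u})$ equals the dimension of the moduli space being degenerated — namely $2$ in both cases, since lines through a fixed point of $\C P^2$ form a $2$-real-dimensional family and $\mathcal{M}_{J}(v_1,v_2)$ is $2$-real-dimensional, cf.\ \Cref{thm:DRfibr}. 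Hence $\operatorname{ind}(u^{\mathrm{top}})\le 2$, and each top-level component has index in $\{0,1,2\}$, with the value $2$ attainable only if all other components and all corrections vanish.

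The crux will be the explicit Conley--Zehnder/Robbin--Salamon data for the Reeb flow of $\alpha_0$ on $S^*\mathbb{T}^2$ with the flat metric: its closed orbits occur in the Morse--Bott $S^1$-families $\Gamma_\eta$, $\eta\in H^1(\mathbb{T}^2;\Q)\sm\{0\}$, with trivial linearized return map, and inserting this into the Atiyah--Singer formula \cite{Atiyah-Singer63_IndexThm} expresses the index of a punctured $J_\infty$-holomorphic sphere in $\C P^2\sm\overline{L}$ through its number of punctures, its asymptotic classes, and $d_{u'}$. From this I would extract: (i) a plane ($1$ puncture) has index $\ge 1$, with equality precisely for planes of $\ell_\infty$-degree $1$ asymptotic to a primitive orbit, which moreover become rigid once the fixed point (resp.\ tangency) condition at $q_i$ is imposed; (ii) a cylinder ($2$ punctures) has index $\ge 0$, with equality precisely for $\ell_\infty$-disjoint cylinders with ends on $\Gamma_\eta$ and $\Gamma_{-\eta}$; (iii) every other top-level component has index $\ge 2$. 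Combining (i)--(iii) with $\sum_{u'}d_{u'}\in\{1,2\}$, the constraint at the marked point(s), and the budget $\operatorname{ind}(u^{\mathrm{top}})\le 2$ — which also forces the remaining index and all Morse--Bott and bottom-level contributions to vanish — leaves exactly the stated configuration: the index-$1$ plane(s) through the constrained point(s) together with a (possibly empty) collection of index-$0$ cylinders, which is precisely how the standard conics $\tilde{f}^{-1}(c)$, $c\in\sigma$, break along the closed geodesics they contain. Transversality of the $\ell_\infty$-passing planes is then obtained by a further perturbation of $J_\infty$ supported in a compact subset of $U\sm\ell_\infty$, as in \cite[Lemma 5.4]{DR19_WhitneySphere}: those planes are somewhere injective off $\ell_\infty$, meet $\ell_\infty$ only at $q_i$, and are rigid after the constraint, so a generic such perturbation achieves transversality without disturbing their germ at $\ell_\infty$.

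I expect the main obstacle to be making the bookkeeping of the second and third steps airtight — in particular, verifying that index cannot be ``stored'' in extra middle- or bottom-level components or in the Morse--Bott matching data in a way that would license an additional top-level piece. This rests on carrying out the Reeb index computation on $S^*\mathbb{T}^2$ with enough care to exclude, on index grounds alone, planes of $\ell_\infty$-degree $0$ and spheres with three or more punctures, and on checking the automatic transversality invoked for the bottom level. A secondary difficulty is that $J_\infty$ is largely prescribed — it must coincide with $J_{\mathrm{cyl}}$ near $\overline{L}$ and with the fixed $J_\tau$ outside the Weinstein neighbourhood — so all genericity required for the transversality assertion must be produced inside the small region $U\sm\ell_\infty$, which is legitimate exactly because the low-index curves in play are forced through $\ell_\infty$.
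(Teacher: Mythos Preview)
The paper does not give its own proof of this lemma: it is imported verbatim as Lemma~5.4 of \cite{DR19_WhitneySphere} and used as a black box, so there is nothing in the manuscript to compare against beyond the citation. Your index-accounting strategy --- positivity of intersection with $\ell_\infty$ to control degrees, additivity of the Fredholm index across levels, and the Morse--Bott Conley--Zehnder data for the flat $S^*\mathbb{T}^2$ --- is the correct general approach and matches what one expects the cited proof to do.

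That said, there is a genuine imprecision in your claim (i). You assert that a top-level plane has index exactly $1$ \emph{precisely} when it has $\ell_\infty$-degree $1$. But in case (1), a broken line, the degree budget is $\sum_{u'} d_{u'}=1$, so only one of the two planes promised by the lemma can meet $\ell_\infty$; the other is disjoint from it (the paper makes this explicit in the caption to Figure~\ref{fig:general_split_conic}). Under your (i), that degree-$0$ plane would be forced to have index $\ge 2$, overrunning the total budget of $2$, and your combinatorial step would fail to produce the second plane at all. The correct dichotomy is governed by the Maslov index of the compactified disk class in $H_2(\C P^2,\overline{L})$, not by the $\ell_\infty$-degree alone: degree-$0$ planes asymptotic to a primitive orbit of Maslov index $2$ also have index $1$, and these supply the second plane in case (1). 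Once (i) is amended in this way the rest of your argument goes through. Your closing remark that the transversality perturbation near $\ell_\infty$ is legitimate ``because the low-index curves in play are forced through $\ell_\infty$'' is fine as stated, since the lemma only asserts transversality for the components passing through $\ell_\infty$ --- but note that it does \emph{not} cover the degree-$0$ plane in case (1), which is consistent with the lemma's conclusion.
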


\begin{center}
	\begin{figure}[h!]
		\centering
		\includegraphics[scale=.8]{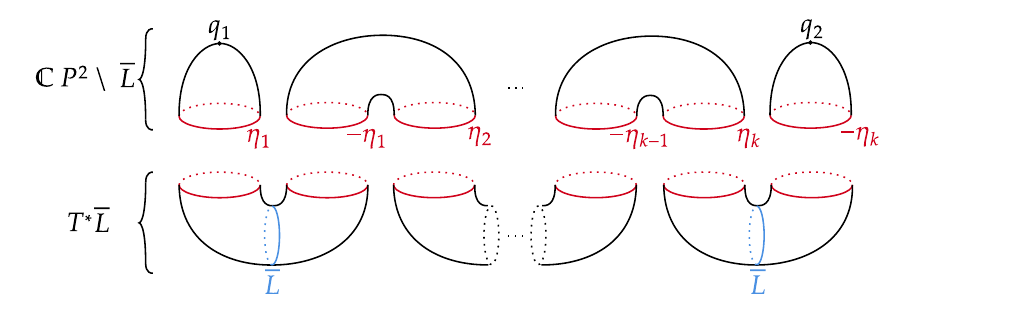}
		\caption{A generic broken pseudoholomorphic conic satisfying tangency conditions to $v_i$ at $q_i\in \ell_\infty$, $i\in\{1,2\}$. A generic broken pseudoholomorphic line looks the same except that in its top level, only one plane has a fixed point at either $q_1$ or $q_2$ while the other plane must be disjoint from $\ell_\infty$.\vspace{.75cm}}
		\label{fig:general_split_conic}
	\end{figure}
\end{center}

Our aim in this subsection is to strengthen this result in the case of broken conics by arguing that no such cylinders can exist in the top level. Then we can establish our compatible fibration by putting the two top level planes back together into a pseudoholomorphic conic along a geodesic of $\overline{L}$. This procedure is explained in detail in $\Cref{sec:proof_of_fibr}$.

\begin{center}
	\begin{figure}[h!]
		\centering
		\includegraphics[scale=.8]{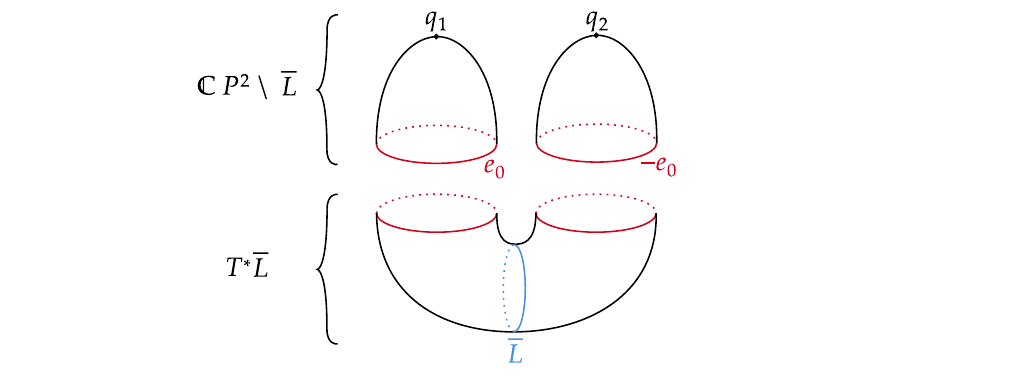}
		\caption{A generic broken pseudoholomorphic conic satisfying tangency conditions to $v_i$ at $q_i\in \ell_\infty$, $i\in\{1,2\}$ following the results of \Cref{lem:twotopplanes}}\vspace{.75cm}.
		\label{fig:resulting_conic}
	\end{figure}
\end{center}

Two key elements to establishing this strengthened result are the exactness assumption of $L$ and in turn $\Cref{lem:exactgen}$ as well as positivity of intersection \cite{McDuff91_Pos_of_int}. Recall that in a four-dimensional symplectic manifold, two $J$-holomorphic curves $F,F'$ have only a finite number of intersection points, all of which contribute positively to the algebraic intersection number $F\bullet F'$. In particular, we will make use of this powerful result by considering the intersections of components appearing in a broken pseudoholomorphic curve with a fixed, standard conic to obtain obstructions as to what types of components these can be.

Let $C$ be a fixed conic in the region $\C P^2 \sm (\overline{L}\cup M)$ that is a fiber over a point in $\C$ contained inside $\sigma_{T}$. By our definition of $J_\tau$, $C$ is fixed under the neck-stretching procedure and so it is an unbroken $J_\infty$-holomorphic conic given that it is also disjoint from $\overline{L}$.

\begin{lemma}[Lemma 5.8 in \cite{DR19_WhitneySphere}]\label{lem:correctgens}
 Let $\overline{L}\sse (\C P^2 \sm (\ell_\infty \cup C),\omega_{FS})$ be a Lagrangian torus that is homologically essential in the same subset. For any choice of basis $\langle e_0, e_1 \rangle = H_1(\overline{L)}$ such that $e_0$ generates the kernel of the canonical inclusion-induced map $H_1(\overline{L})\to H_1(\C P^2\sm(\ell_\infty \cup C))\cong \Z$, we have that any symplectic disk $(D,\partial D)\to (\C P^2\sm\ell_\infty, \overline{L})$ with boundary in the homology class $e_1$ satisfies $D\bullet C =:a_1>0$.
\end{lemma}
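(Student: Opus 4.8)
The plan is to reduce the statement to a homological computation in the open manifold $\C P^2\sm(\ell_\infty\cup C)$ and then to pin down a sign using positivity of symplectic area together with positivity of intersections. First I would record the topology of the complement: since $\C P^2\sm\ell_\infty\cong(\C^2,\omega_0)$ and the smooth conic $C$ meets the line $\ell_\infty$ transversally in two points, $C\sm\ell_\infty\cong\C P^1\sm\{2\text{ pts}\}$ is a smooth connected affine curve, and a Mayer--Vietoris argument gives $H_1(\C P^2\sm(\ell_\infty\cup C);\Z)\cong\Z$, generated by a meridian $\mu_C$ of $C$ (it has infinite order because it pairs nontrivially with the map $\C^2\sm C\to\C^*$ given by the defining quadratic polynomial of $C$). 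Orient $\mu_C$ as the boundary of a small $J$-complex disk meeting $C$ once and normalise the isomorphism $H_1(\C P^2\sm(\ell_\infty\cup C))\xrightarrow{\ \cong\ }\Z$ so that $\mu_C\mapsto 1$. The standard linking argument then says: given a symplectic disk $(D,\dd D)\to(\C P^2\sm\ell_\infty,\overline L)$ with $[\dd D]=e_1$, perturb $D$ rel boundary transverse to $C$ and excise small disk neighbourhoods of the finitely many points of $D\cap C$; the resulting $2$-chain lies in $\C P^2\sm(\ell_\infty\cup C)$ and exhibits $[\dd D]=(D\bullet C)[\mu_C]$ there. Hence $D\bullet C=\Phi(e_1)$, where $\Phi\colon H_1(\overline L)\to H_1(\C P^2\sm(\ell_\infty\cup C))\cong\Z$ is the inclusion-induced map; in particular $D\bullet C$ depends only on $e_1$ (equivalently: $D\sse\C^2$ and $H_2(\C^2)=0$, so $[D]\in H_2(\C^2,\overline L)$ is determined by $e_1$, while $C$ is a cycle disjoint from $\overline L$). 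By hypothesis $\Phi(e_0)=0$, while $\Phi(e_1)\neq 0$ since $e_0$ generates $\ker\Phi$ and $\langle e_0,e_1\rangle$ is a $\Z$-basis. So $D\bullet C\neq 0$ and everything comes down to its sign.

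To fix the sign I bring in symplectic areas. On $\C P^2\sm\ell_\infty\cong(\C^2,\omega_0)$ we have $\omega_{FS}=d\lambda$, and $\lambda|_{\overline L}$ is closed because $\overline L$ is Lagrangian; hence for any $2$-chain $D$ in $\C P^2\sm\ell_\infty$ with $\dd D\sse\overline L$, Stokes gives $\int_D\omega_{FS}=\langle[\lambda|_{\overline L}],[\dd D]\rangle$, which depends only on $[\dd D]$ and, by \Cref{lem:exactgen}, annihilates $e_0$. In particular the symplectic disk in the statement has $\langle[\lambda|_{\overline L}],e_1\rangle=\int_D\omega_{FS}>0$.

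Finally, $\overline L$ is a closed Lagrangian torus in $(\C^2,\omega_0)$, hence not exact, so by Gromov \cite{Gromov85_PseudoholCurves} it bounds a non-constant $J$-holomorphic disk $u$ (with image in $\C P^2\sm\ell_\infty$) for the tame almost complex structure $J$ with respect to which $C$ is holomorphic. Writing $[\dd u]=\a e_1+\b e_0$, positivity of area gives $0<\int u^*\omega_{FS}=\a\langle[\lambda|_{\overline L}],e_1\rangle$, so $\a>0$; and positivity of intersections \cite{McDuff91_Pos_of_int} (applied to $u$ and the embedded conic $C$, which cannot share a component since $\dd u\sse\overline L$ is disjoint from $C$) gives $0\le u\bullet C=\Phi(\a e_1+\b e_0)=\a\,\Phi(e_1)$. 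Since $\a>0$ and $\Phi(e_1)\neq 0$, we conclude $\Phi(e_1)>0$, that is, $D\bullet C=\Phi(e_1)=:a_1>0$.

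The genuinely delicate step is this last sign determination: the homology is routine, but getting \emph{positivity} rather than mere non-vanishing really requires the interplay of the positive area of honest symplectic/holomorphic disks, \Cref{lem:exactgen} (which forces $[\lambda|_{\overline L}]$ and $\Phi$ to be proportional, since both annihilate $e_0$), and positivity of intersection of a bona fide $J$-holomorphic disk with $C$. The point needing care is to check that Gromov's disk may be taken for the almost complex structure $J_\infty$ produced by the neck-stretching construction and that $C$ is $J_\infty$-holomorphic in a neighbourhood of itself; both hold because $C$ lies in the region $\C P^2\sm(\overline L\cup M)$ on which $J_\infty$ coincides with the integrable structure. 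Alternatively one can bypass Gromov's theorem and fix the sign by a direct orientation computation in the standard local model, where $\overline L$ is an $S^1$-bundle over a closed base curve $\Sigma\sse\C$ and $C$ is the fibre $\tilde f^{-1}(s_0)$ over a point $s_0\in\Sigma$, so that $\Phi(e_1)$ is visibly the winding number of $\Sigma$ about $s_0$, equal to $+1$ for the complex orientations; but the homological route is cleaner and does not require $\overline L$ to be of this special form.
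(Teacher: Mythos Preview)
The paper does not supply its own proof of this lemma; it is quoted as Lemma~5.8 of \cite{DR19_WhitneySphere} and used as a black box. So there is nothing in the paper to compare against, and your write-up is in fact more than the paper provides. Your homological reduction is correct: the linking argument gives $D\bullet C=\Phi(e_1)$ where $\Phi\colon H_1(\overline L)\to H_1\bigl(\C P^2\sm(\ell_\infty\cup C)\bigr)\cong\Z$ is the inclusion-induced map, and $\Phi(e_1)\neq 0$ because $e_0$ generates $\ker\Phi$. The mechanism you use for the sign---Gromov's $J$-holomorphic disk on $\overline L$ together with positivity of intersection against the $J$-holomorphic conic $C$---is also the right one, and the argument goes through cleanly once you know $\langle[\lambda|_{\overline L}],e_0\rangle=0$.

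The one point to flag is that your appeal to \Cref{lem:exactgen} for that vanishing is only valid for the specific torus $\overline L$ built in this paper from an exact filling $L$; it is not part of the hypotheses of \Cref{lem:correctgens} as stated. In fact the lemma as literally written (for \emph{any} $e_1$ completing $e_0$ to a basis) is too strong without such an extra hypothesis. For instance, take the product torus $\overline L=S^1(r_1)\times S^1(r_2)\sse\C^2$ with $r_1>2r_2$ and $C=\{z_1z_2=c\}$ with $0<|c|<r_1r_2$; then $e_0:=e_0'-e_0''$ generates $\ker\Phi$, and $e_1:=e_0'-2e_0''$ completes it to a basis. The embedded disk $w\mapsto(r_1 w,\,r_2\bar w^{\,2})$ on $\{|w|\le 1\}$ has $\omega|_D=(r_1^2-4r_2^2|w|^2)\,dx\wedge dy>0$, hence is genuinely symplectic, has boundary class $e_1$, yet $D\bullet C=\Phi(e_1)=-1$. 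So either \cite{DR19_WhitneySphere} carries an additional hypothesis elided here, or---as the paper does explicitly in the paragraph immediately following the lemma---one must take $e_0$ to be the ``exact'' class of \Cref{lem:exactgen}. Your proof implicitly makes exactly this identification, and with it the argument is complete and covers precisely the case the paper needs.
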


In order to use this result, we note that we have constructed $\overline{L}$ and chosen $C$ accordingly so that $\overline{L}$ satisfies the assumptions of this lemma where we choose $e_0$ such that it is the induced generator coming from the natural inclusion of $L\hookrightarrow \overline{L}$ as in \Cref{lem:exactgen}. Therefore we have that all disks in $\C P^2\sm \ell_\infty$ with boundary in the homology class $ke_0+le_1\in H_1(\overline{L})$, $l\neq 0$, intersect with $C$ positively.

Now we will use this (along side positivity of intersection and \Cref{lem:exactgen}) to prove the following lemmas, ultimately leading to our stronger version of \Cref{DR19_lem5.4}. 

\begin{lemma}\label{lem:nodaldoesntbreak}
    The image of the nodal conic, $C_\text{nodal}:=\ell_1\cup \ell_2$, under the neck-stretching sequence is not broken. $\overline{L}$ can then be disjoined from $C_{nodal}$ by a Hamiltonian isotopy.
\end{lemma}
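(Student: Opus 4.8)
The statement has two halves: first that the nodal conic $C_{\mathrm{nodal}}=\ell_1\cup\ell_2$ survives the neck-stretch unbroken, and second that $\overline L$ can then be displaced from $C_{\mathrm{nodal}}$ by a Hamiltonian isotopy. I would prove the first half by a degree/energy-and-intersection argument, mimicking \cite[Lemma~5.8 ff.]{DR19_WhitneySphere} but adapted to the cylinder topology of $\overline L$. Suppose for contradiction the limit of the $J_\tau$-images of $C_{\mathrm{nodal}}$ breaks. Each $\ell_i$ is a line, so its limit is a broken line; by \Cref{DR19_lem5.4} (applied at $q_i$) the top level of each consists of two index-one planes plus possibly several index-zero cylinders, with exactly one plane passing through $\ell_\infty$ at $q_i$ and the other plane disjoint from $\ell_\infty$. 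The key point is that any component of the broken line other than the plane through $q_i$ must cap off, using chains in the bottom/middle levels over $\overline L$, to a symplectic disk (or sphere) in $\C P^2\setminus\ell_\infty$ whose boundary lies on $\overline L$. Its boundary homology class in $H_1(\overline L)\cong\Z^2$ is then constrained: if the class is $k e_0$ it has symplectic area $0$ by \Cref{lem:exactgen}, forcing the component to be constant — impossible for a genuine non-trivial sphere component; if the class is $ke_0+\ell e_1$ with $\ell\neq 0$ then by \Cref{lem:correctgens} it intersects the fixed conic $C$ positively, i.e.\ $\geq 1$. But $\ell_i\bullet C = 2$ for each line, and this intersection number is preserved in the limit and distributes (with signs that are all $+$, by positivity of intersections \cite{McDuff91_Pos_of_int}) among the top-level components plus the chains; tallying the forced positive contributions of the extra planes/cylinders against the total of $2$ (one unit of which is already spoken for by the $q_i$-plane, since that plane, completed, is homologous to $\ell_i$ and meets $C$) leaves no room for any break to actually occur. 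Concretely I expect to show: the $q_i$-plane completes to something homologous to $\ell_i$ hence accounting for all of $\ell_i\bullet C=1$ after normalizing by the common fiber class — wait, here I need to be careful about whether $\ell_i\bullet C$ is $1$ or $2$; the lines $\ell_1,\ell_2$ are degree-one and $C$ is degree-two, so $\ell_i\bullet C=2$, and the fixed point $q_i$ lies on $\ell_\infty$, not on $C$, so both intersection points of $\ell_i$ with $C$ are genuinely "in play". The cleanest route is therefore to run the argument of \cite{DR19_WhitneySphere} verbatim at the homological level: the only broken configurations compatible with index $\geq 0$, the tangency/point conditions, exactness (\Cref{lem:exactgen}), and positivity against $C$ are the unbroken ones.

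**Disjoining $\overline L$ from $C_{\mathrm{nodal}}$.** Once $C_{\mathrm{nodal}}$ is unbroken under the stretch, its limit is a $J_\infty$-holomorphic nodal conic in $\C P^2\setminus\overline L$ — in particular it is disjoint from $\overline L$ (the top level lives in the complement of $\overline L$). Tracking backward along the neck-stretching family $J_\tau$, for $\tau$ large the $J_\tau$-holomorphic representative $C_{\mathrm{nodal}}^{J_\tau}$ is $C^{\infty}$-close to this limit and hence also disjoint from $\overline L$, while for $\tau=0$ it is the standard $\ell_1\cup\ell_2$. Since $\{C_{\mathrm{nodal}}^{J_\tau}\}_{\tau}$ is a smooth family of symplectic nodal conics in $\C P^2$ (each the unique pair of $J_\tau$-lines through $q_1,q_2$ with the given tangencies, by Gromov's foliation \cite{Gromov85_PseudoholCurves}), a parametrized Moser / symplectic isotopy extension argument produces an ambient symplectic isotopy of $\C P^2$ carrying $C_{\mathrm{nodal}}=C_{\mathrm{nodal}}^{J_0}$ to $C_{\mathrm{nodal}}^{J_\tau}$ for $\tau$ large; composing with its inverse and noting it can be taken to fix $\ell_\infty$ (all $J_\tau$ are standard near $\ell_\infty$) and to be Hamiltonian on $\C P^2\setminus\ell_\infty\cong (B^4,\omega_0)$, we get a Hamiltonian isotopy moving $\overline L$ off $C_{\mathrm{nodal}}$. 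Equivalently, and perhaps more simply: move the \emph{torus} rather than the conic — the isotopy $\phi_t$ realizing the displacement-from-the-limit can be arranged with support away from $\ell_\infty$, hence it is generated by a compactly supported Hamiltonian on $B^4$.

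**Main obstacle.** The genuinely delicate step is the first one: ruling out the break of $C_{\mathrm{nodal}}$. The bookkeeping is subtle because a broken \emph{line} limit can a priori have several index-zero cylinders in the middle/top levels, and one must verify that completing the non-$\ell_\infty$ pieces to symplectic disks/spheres with $\overline L$-boundary is legitimate (this is where one invokes the SFT compactness structure and the capping-by-chains description from \Cref{sec:prelim}), that the resulting boundary classes are exactly the ones covered by \Cref{lem:exactgen} and \Cref{lem:correctgens}, and that the positivity-of-intersections count against $C$ is saturated. One must also handle the possibility that $\ell_1$ and $\ell_2$ break "together" through a shared geodesic of $\overline L$ — here the fact that, for limits of \emph{lines}, distinct components have disjoint interiors (quoted after \Cref{DR19_lem5.4}) is what keeps the two broken lines from interacting, so each is constrained separately. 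I would organize the argument as: (i) describe all possible broken-line limits via index $\geq 0$; (ii) cap non-$\ell_\infty$ components to disks and read off boundary classes; (iii) apply exactness to kill $ke_0$-classes and \Cref{lem:correctgens}+positivity to bound $ke_0+\ell e_1$-classes; (iv) sum against $\ell_i\bullet C=2$ to reach a contradiction unless the limit is unbroken; (v) deduce the displacement as above.
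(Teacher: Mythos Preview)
Your approach is essentially the paper's, and the second half (tracking the family $C_{\mathrm{nodal}}^{J_\tau}$ and using isotopy extension to move $\overline L$ off $C_{\mathrm{nodal}}$) is exactly what \cite[Lemma~5.7]{DR19_WhitneySphere} does, which the paper simply cites.

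For the first half you have the right structure --- split on the boundary class of the plane $B$ disjoint from $\ell_\infty$, kill the $ke_0$ case by exactness (\Cref{lem:exactgen}), and overflow the intersection budget against the fixed conic $C$ in the $ke_0+le_1$ case --- but your bookkeeping contains a concrete error. You write that ``the fixed point $q_i$ lies on $\ell_\infty$, not on $C$''. This is false: $C$ is chosen as a standard fiber of the Lefschetz fibration, so it passes through both $q_1,q_2\in\ell_\infty$, tangent to $v_i$ there. Since $J_\tau$ is standard outside $M$, the top-level plane $A$ through $q_1$ is likewise tangent to $v_1$ at $q_1$, and hence $A\bullet C\geq 2$ by local positivity at that tangency. \emph{That} is the observation that closes the count: the completed degree-one cycle gives $2=\ell_1\bullet C\geq A\bullet C+B\bullet C\geq 2+la$ when $l\neq 0$. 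Your version, with the two intersections of $\ell_i$ with $C$ ``genuinely in play'' away from $q_i$, does not yield a contradiction.

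One simplification: you need not analyze the top-level cylinders separately here. Their contributions to the intersection with $C$ are non-negative by positivity, so they only strengthen the inequality; the paper's argument runs entirely on $A$ and $B$.
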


\begin{proof}
    Suppose that the nodal conic did break. That is, at least one of $\ell_1$ or $\ell_2$ is a broken line under the neck stretching procedure. Without loss of generality, assume $\ell_1$ breaks and let $A,B$ be the two disks in the top level of its building where $A$ satisfies the fixed point $q_1\in \ell_\infty$. Also let $C_1,...,C_k$, $k\geq 0$, be the $k$ cylinders in the top level and $\eta_1,...\eta_{k+1} \in H_1(\overline{L})$ be the geodesics corresponding to the asymptotics as in \Cref{fig:general_split_conic}. 
    
    First we note that by positivity of intersection, $B$ must necessarily be disjoint from $\ell_\infty$. Indeed if that were not the case, then completing these top level components into the cycle $\big(A\cup_{\eta_1} C_1 \cup_{-\eta_1} C_2\cup ... \cup C_k \cup_{-\eta_{k+1}} B\big)$ gives us a degree 1 pseudoholomorphic sphere which would have an algebraic intersection number with $\ell_\infty$ greater than 1, a contradiction.

    As such, $B$ is a disk in $\C P^2\sm \ell_\infty$ with its asymptotic corresponding to a geodesic on $\overline{L}$ in the homology class $ke_0+le_1\in H_1(\overline{L})$. Now if $l\neq 0$, by $\Cref{lem:correctgens}$ we have that $B\bullet C = la>0$. But since $A$ and $C$ both satisfy the same tangency condition at $q_1\in \ell_\infty$ we have that $A\bullet C = 2$. So once again completing the cycle yields $\big(A\cup_{\eta_1} C_1 \cup_{-\eta_1} ... \cup C_k \cup_{-\eta_{k+1}} B\big)\bullet C \geq 2+la>2$, contradicting the fact that a line and a conic must have an algebraic intersection number of at most 2.

    And if $l=0$, then we have a disk in $\C P^2 \sm \ell_\infty$ that bounds $\overline{L}$ along a curve in the homology class of $e_0$ which would contradict \Cref{lem:exactgen}. Thus, $\ell_1$ (and equivalently $\ell_2$) cannot break under the neck stretching, so $C_\text{nodal}$ is not broken.

    What is left is to construct the Hamiltonian isotopy to disjoin $\overline{L}$ from $C_\text{nodal}$. We refer to \cite[Lemma 5.7]{DR19_WhitneySphere} where such an isotopy is constructed, noting that the slight difference in setting means we choose compatible almost complex structures that are standard outside of $M$ rather than ones that are standard near $f^{-1}(1)$.
\end{proof}

\begin{lemma}\label{lem:e_0asymptotics}
    Let $\langle e_0, e_1\rangle = H_1(\overline{L})$ be a basis as in \Cref{lem:correctgens}. Then the components of any $J_\infty$-holomorphic broken conic satisfying the tangencies at $q_i$ all have asymptotics in the homology classes $k e_0\in H_1(\overline{L})$, $k\neq 0$.
\end{lemma}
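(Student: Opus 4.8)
The plan is to argue by contradiction, combining positivity of intersection against the fixed auxiliary conic $C$ with the special basis of $H_1(\overline L)$ furnished by \Cref{lem:correctgens}. In that basis $\langle e_0,e_1\rangle=H_1(\overline L)$, the class $e_0$ spans the kernel of $H_1(\overline L)\to H_1(\C P^2\sm(\ell_\infty\cup C))\cong\Z$, while $e_1$ maps to $a_1$ times a generator with $a_1>0$. Since $C$ is disjoint from $\overline L$, the intersection number with $C$ of any compact surface in $\C P^2\sm\ell_\infty$ whose boundary lies on $\overline L$ depends only on its class in $H_2(\C P^2\sm\ell_\infty,\overline L)\cong H_1(\overline L)$ (the isomorphism being the boundary map), and in these terms the class $ke_0+le_1$ pairs with $C$ to give the integer $l\,a_1$. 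Recall also that our broken conic $\mathcal{B}$ is an SFT limit of a sequence $Q_\tau\in\mathcal{M}_{J_\tau}(v_1,v_2)$ of honest $J_\tau$-holomorphic conics; if $\mathcal{B}$ has no asymptotics the statement is vacuous, so we assume it is genuinely broken, and then (by \Cref{lem:nodaldoesntbreak} the nodal conic does not break) the $Q_\tau$ may be taken irreducible and distinct from $C$. By \Cref{DR19_lem5.4} the top level of $\mathcal{B}$ consists of exactly two planes $P_1,P_2$ passing through $q_1,q_2$ with the tangencies to $v_1,v_2$, together with finitely many index-$0$ cylinders.

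The key geometric input concerns the intersections of $C$. Being a fiber of the standard Lefschetz fibration, $C$ lies in $\mathcal{M}_{J_0}(v_1,v_2)$, hence is tangent to $v_i$ at $q_i$; being outside $M$, it is $J_\tau$-holomorphic for every $\tau$. By B\'ezout $Q_\tau\bullet C=4$, and since $C$ lies in $\C P^2\sm(\overline L\cup M)$ — so, for $\epsilon$ small, is disjoint from the Weinstein neck $\phi(T^*_{3\epsilon}\overline L)$ — all four intersection points remain, in the limit, on the top level of $\mathcal{B}$. Positivity of intersection together with the tangencies at $q_i$ gives $P_1\bullet C\geq2$ and $P_2\bullet C\geq2$, so in fact $P_1\bullet C=P_2\bullet C=2$, and every other top-level component, as well as every middle- and bottom-level component, is disjoint from $C$; correspondingly, for $\tau$ large the portion of $Q_\tau$ lying near $\overline L$ is disjoint from $C$.

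Now suppose some asymptotic of $\mathcal{B}$, i.e.\ some orbit $\Gamma_\eta$ occurring at an edge of the building tree (the tree has genus $0$ since the $Q_\tau$ are spheres), has class $\eta=ke_0+le_1$ with $l\neq0$. By SFT compactness, for $\tau$ large this edge is realized by an embedded circle $\gamma_\tau\sse Q_\tau$ inside the neck $\phi(T^*_{3\epsilon}\overline L)$ cutting the sphere $Q_\tau$ into two disks $\Sigma'_\tau,\Sigma''_\tau$ converging to the two sub-buildings, the radial projection of $\gamma_\tau$ to $\overline L$ representing $\pm\eta$. If one of these disks, say $\Sigma''_\tau$, contained neither $q_1$ nor $q_2$, then $\Sigma''_\tau$ would be disjoint from $\ell_\infty$ and, by the previous paragraph, from $C$; capping $\gamma_\tau$ radially onto $\overline L$ presents $\Sigma''_\tau$ as the degree-$0$ class in $H_2(\C P^2\sm\ell_\infty,\overline L)$ with boundary $\pm\eta$, whose intersection number with $C$ is $\pm l\,a_1\neq0$, a contradiction. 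Hence, relabeling if needed, $q_1\in\Sigma'_\tau$ and $q_2\in\Sigma''_\tau$; then $\Sigma'_\tau$ meets $\ell_\infty$ transversally in the single point $q_1$, so $[\Sigma'_\tau]\in H_2(\C P^2,\overline L)$ has degree $1$, while the tangency argument again forces $\Sigma'_\tau\bullet C=2$. Because a relative class in $H_2(\C P^2,\overline L)$ is determined by its degree and its boundary class, and $[\ell_1]$ has degree $1$, boundary $0$ (as $\ell_1\sse C_\text{nodal}$ is disjoint from $\overline L$ by \Cref{lem:nodaldoesntbreak}), and $\ell_1\bullet C=2$, we obtain $[\Sigma'_\tau]=[\ell_1]+\widetilde\eta$ with $\widetilde\eta$ the degree-$0$ class of boundary $\pm\eta$; intersecting with $C$ gives $2=\Sigma'_\tau\bullet C=\ell_1\bullet C+\widetilde\eta\bullet C=2\pm l\,a_1$, hence $l=0$, a contradiction. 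Since Reeb orbits correspond to nonzero classes of $H_1(\overline L)$, every asymptotic therefore lies in $\{ke_0:k\neq0\}$.

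The step I expect to be the main obstacle is the passage through SFT compactness in the last paragraph: that each edge of the building tree is realized, for $\tau$ large, by an embedded separating circle in $Q_\tau$ inside the Weinstein neck whose projected class on $\overline L$ is $\pm\eta$, and — in the same compactness limit, using positivity of intersection and $C\cap\overline L=\emptyset$ — that the part of $Q_\tau$ near $\overline L$ becomes disjoint from $C$. Once these are granted, the remainder is purely homological bookkeeping in $H_2(\C P^2,\overline L)$ using B\'ezout's theorem, the tangency conditions at $q_1,q_2$, and \Cref{lem:correctgens}, and it treats every gluing orbit uniformly, so no separate analysis of the top-level cylinders is needed.
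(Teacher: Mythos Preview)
Your proof is correct, and it rests on the same homological mechanism as the paper's: both arguments use that the broken conic meets the auxiliary conic $C$ only at $q_1,q_2$ (total multiplicity $4$, all accounted for by the tangencies), and then compare this with the intersection number $la_1$ coming from \Cref{lem:correctgens} to force $l=0$. The packaging, however, is genuinely different. The paper works directly in the SFT limit: for a top-level component $A$ it compactifies $A$ along its asymptotics, caps with an arbitrary null-homology $B\sse\C P^2\sm\ell_\infty$, and computes $(A\cup B)\bullet C$ two ways---as $2d$ from the degree $d=A\bullet\ell_\infty$, and as $2d+la_1$ via \Cref{lem:correctgens}---with no case analysis. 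You instead stay at finite $\tau$, cut the smooth $J_\tau$-conic $Q_\tau$ along a single neck circle $\gamma_\tau$ realizing a chosen asymptotic $\eta$, and run the intersection count on the resulting honest disk. Your route costs you the SFT-compactness step you flag (realizing each edge by a separating circle in $Q_\tau$) and the two-case split on where $q_1,q_2$ land, but it buys you something: it constrains each individual asymptotic orbit directly, whereas the paper's version, read literally, pins down only the \emph{total} boundary class of each top-level component and implicitly relies on the chain structure of the top level to propagate $l=0$ to every puncture of the cylinders.
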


\begin{proof}
    First, let $C$ be the standard unbroken conic chosen above. Any $J_\infty$-holomorphic broken conic satisfying the given tangencies intersects with $C$ with an algebraic intersection number of $+4$, meaning they intersect only at the two points $q_i$ on the line $\ell_\infty$.

    Now consider a top-level component $A$ of the building of the broken conic with an asymptotic corresponding to a geodesic on $\overline{L}$ in the homology class $ke_0+ le_1 \in H_1(\overline{L})$. Compactify $A$ such that its boundary is in the homology class $-(ke_0+le_1)\in H_1(\overline{L})$ and choose any null-homology $B\subset \C P^2 \sm \ell_\infty$ of $ke_0+le_1\in H_1(\overline{L})$ to create a cycle $A\cup B$. We have produced a cycle in $H_2(\C P^2)$ with degree $d$ given by $d:=A\bullet \ell_\infty$, which can be either $0$, $1$, or $2$. Hence, the algebraic intersection number of the cycle with $C$ is $2d$. 
    
    Now since the pseudoholomorphic conics we are considering can only intersect at the two points $q_i\in \ell_\infty$, it follows that a component of one of these conics, A, can only intersect $C$ at these two points as well. Thus, $A\bullet C=2d$ using positivity of intersection. But from \Cref{lem:correctgens}, we have that $B\bullet C = la$ where $a>0$. As such, the intersection number of the cycle $A\cup B$ and $C$ is also given by $2d+la$. And so we get that $l$ must be $0$.
\end{proof}

Now that we know what the asymptotics look like for a broken conic, we can use our assumption of exactness on $L$ as in $\Cref{lem:exactgen}$ to classify all components appearing in the top level of a building.

\begin{lemma}\label{lem:twotopplanes}
    Any broken conic appearing as a result of stretching the neck has precisely two planes $D_i\sse \C P^2\sm \overline{L}$ in its top level where $D_i$ is asymptotic to a geodesic on $\overline{L}$ in the homology class $\pm e_0\in H_1(\overline{L})$.
\end{lemma}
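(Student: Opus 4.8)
The plan is to bootstrap from \Cref{DR19_lem5.4}, which already tells us that the top level of the broken conic consists of exactly two planes $D_1,D_2$ of Fredholm index $1$ together with $k\ge 0$ index-$0$ cylinders $C_1,\dots,C_k$; and from \Cref{lem:e_0asymptotics}, which says that every asymptotic geodesic of every component of the building lies in a homology class $me_0$ with $m\neq 0$. What remains is to show that $k=0$ and that the asymptotic geodesic of each $D_i$ is \emph{primitive}, i.e. of class $\pm e_0$. I would use two inputs: positivity of intersection against the $J_\infty$-holomorphic line $\ell_\infty$ \cite{McDuff91_Pos_of_int}, and the relative exactness of $\overline L$ from \Cref{lem:exactgen}. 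For the latter, write $\omega_{FS}=d\lambda$ on $\C P^2\sm\ell_\infty$ (via the symplectomorphism $\varphi$ of \Cref{sec:setting}); then $\lambda|_{\overline L}$ is closed and pairs trivially with $e_0$, so any $2$-chain in $\C P^2\sm\ell_\infty$ whose boundary is a loop on $\overline L$ in a multiple of $e_0$ has zero $\omega_{FS}$-area.

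First a degree count. For a top-level component $A$ set $\deg A:=A\bullet\ell_\infty\ge 0$; since the middle and bottom levels lie in $\R\times S^*\overline L$ and $T^*\overline L$, which are disjoint from $\ell_\infty$, the entire intersection of the broken conic with $\ell_\infty$ is carried by the top level, so $\sum_A\deg A=(2[\mathrm{line}])\bullet[\ell_\infty]=2$. A degree-$0$ plane is impossible: compactifying it along its single asymptotic geodesic $\bar\gamma$ (of class $me_0$) gives a disk in $\C P^2\sm\ell_\infty$ with boundary $\bar\gamma$, whose $\omega_{FS}$-area is $\int_{\bar\gamma}\lambda=0$ by Stokes, so the plane would be constant. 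Hence $\deg D_1,\deg D_2\ge 1$, forcing $\deg D_1=\deg D_2=1$ and $\deg C_j=0$ for all $j$. Now suppose some cylinder $C_j$ occurs; being of degree $0$ it is disjoint from $\ell_\infty$ by positivity of intersection, hence lies in $\C P^2\sm\ell_\infty$. Compactifying $C_j$ along its two asymptotic geodesics $\bar\gamma_{j,1},\bar\gamma_{j,2}\subset\overline L$, which lie in classes $a_je_0$ and $b_je_0$ by \Cref{lem:e_0asymptotics}, produces an annulus $\overline{C_j}$ into $\C P^2\sm\ell_\infty$ with boundary on $\overline L$, and Stokes gives
\[
\int_{C_j}\omega_{FS}=\int_{\overline{C_j}}\omega_{FS}=\int_{\bar\gamma_{j,1}}\lambda+\int_{\bar\gamma_{j,2}}\lambda=0.
\]
Since $C_j$ is a non-constant $J_\infty$-holomorphic curve in $\C P^2\sm\overline L$, which is not a symplectization and hence contains no trivial cylinders, its $\omega_{FS}$-energy is strictly positive — a contradiction. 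Thus $k=0$ and the top level is exactly $\{D_1,D_2\}$.

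It remains to see that the asymptotic of each $D_i$ is primitive. Each $D_i$ is a degree-$1$ plane asymptotic to a geodesic $\gamma_i$ of class $m_ie_0$; compactifying along $\gamma_i$ yields a degree-$1$ disk in $\C P^2$ with boundary on $\overline L$ in a multiple of $e_0$, and by the relative-exactness identity all such disks have the same $\omega_{FS}$-area. A standard conic fiber over a point of $\sigma_{\mathrm{Ends}}$ meets $\overline L$ along a primitive loop (the zero section, by \Cref{lem:compends}), splitting into two disks of exactly this type whose areas sum to that of the conic, i.e. twice the area of a line; hence $\int_{D_i}\omega_{FS}$ equals the area of a line for $i=1,2$. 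As the reassembled broken conic also has $\omega_{FS}$-area twice that of a line, the middle and bottom levels carry zero $\omega_{FS}$-energy, so they consist only of trivial cylinders; since all building components are embedded, these trivial cylinders lie over \emph{simple} Reeb orbits, which forces $\gamma_1$ and $\gamma_2$ to be primitive — hence of class $\pm e_0$ — and indeed to coincide as unoriented geodesics. This is the configuration glued back into a single pseudoholomorphic conic meeting $\overline L$ along $\gamma_1=\gamma_2$ in \Cref{sec:proof_of_fibr}.

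The main obstacle is that \cite{DR19_WhitneySphere} proves the analogous statement using the genuine exactness of the Lagrangian torus, whereas here $\overline L$ is a closed, hence necessarily non-exact, Lagrangian in $\C P^2$: the whole argument has to be funneled through the ``relative exactness'' of $\overline L$ along the single distinguished class $e_0$, which is the reason \Cref{lem:e_0asymptotics} and \Cref{lem:exactgen} are set up first. In practice the two delicate points are ensuring, in the cylinder step, that no top-level cylinder can reach $\ell_\infty$ (so that Stokes' theorem on $\C P^2\sm\ell_\infty$ applies, which the degree count supplies), and, in the final step, converting the area identity into the statement that the levels below the top are trivial cylinders over simple orbits, which requires tracking the SFT matching conditions between the two top-level planes and the lower levels with care.
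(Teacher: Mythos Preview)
Your argument for ruling out top-level cylinders is essentially the paper's: both show such a cylinder must miss $\ell_\infty$ (you via an explicit degree count, the paper via a one-line appeal to positivity of intersection), then apply Stokes with the relative exactness of $e_0$ to get zero area, contradicting embeddedness. That part is fine.

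Where you diverge is the primitivity step. The paper dispatches this in one line: by \Cref{DR19_lem5.4} each plane has Fredholm index $1$, and the index formula for a plane in $\C P^2\sm\overline{L}$ asymptotic to a Morse--Bott family $\Gamma_{me_0}$ (with the tangency constraint at $q_i$) forces $|m|=1$. Your route via an area/energy identity is more elaborate and, as written, has a gap. The assertion that ``the middle and bottom levels carry zero $\omega_{FS}$-energy'' is not well-posed: those levels live in $\R\times S^*\overline{L}$ and $T^*\overline{L}$, not in $(\C P^2,\omega_{FS})$, and there is no SFT energy identity of the form $\sum_{\text{top}}\int\omega_{FS}+\sum_{\text{lower}}(\cdots)=\text{area of the conic}$ that lets you conclude triviality of the lower levels from $\int_{D_1}\omega_{FS}+\int_{D_2}\omega_{FS}=2A$. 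Indeed, in the Gromov limit all lower-level components collapse onto $\overline{L}$ and contribute zero $\omega_{FS}$-mass \emph{regardless} of whether they are trivial, so your area equality carries no information about them. In particular a bottom-level straight cylinder in $T^*\overline{L}$ has strictly positive $d\lambda_{\mathbb{T}^2}$-energy and is certainly not a ``trivial cylinder'', yet is perfectly compatible with your area count.

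Your final idea---embedded components force simple orbits---is the right lever, but you should pull it directly rather than via the energy detour. Once the top level is exactly $\{D_1,D_2\}$, the genus-zero tree structure of the building is a path, so every lower-level component is a cylinder; the bottom-level cylinder in $T^*\overline{L}$ is then a straight cylinder over a geodesic of class $me_0$, and embeddedness (as supplied by \cite[Lemma 5.2]{DR19_WhitneySphere}) forces $|m|=1$. That repairs your argument, but it is still longer than simply invoking the index as the paper does.
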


\begin{proof}
    Suppose for the sake of contradiction there was a cylinder in the top level of the building of a broken pseudoholomorphic conic. By positivity of intersection, such a cylinder is disjoint from $\ell_\infty$ and is thus contained in $\C P^2 \sm \ell_\infty$. By \Cref{lem:e_0asymptotics}, the cylinder has asymptotics corresponding to geodesics in the homology classes $k_1 e_0, k_2 e_0\in H_1(\overline{L})$, $k_1,k_2\in \Z\sm \{0\}$. Through an application of Stokes' theorem \cite{Spivak79_DiffGeo}, the symplectic area of such a cylinder is given by
    $$\int_{k_1 e_0} \lambda + \int_{k_2 e_0} \lambda.$$
    But each integral must necessarily be equal to 0 by the same argument as in \Cref{lem:exactgen}. And since all components of a broken pseudoholomorphic conic are embedded by \cite[Lemma 5.2]{DR19_WhitneySphere}, no such cylinder can exist. 
    
    Hence, the top level must consist only of the two planes $D_i\subset \C P^2\sm \overline{L}$ with tangencies  to $v_i$ at $q_i\in \ell_\infty$. And since each plane is of index 1 by \Cref{DR19_lem5.4}, we must have that their asymptotics correspond to geodesics in the homology class $\pm e_0\in H_1(\overline{L})$.
\end{proof}

\subsection{Proof of Theorem 3.1}\label{sec:proof_of_fibr}

Our proof follows very closely to the proof of \cite[Theorem 5.12]{DR19_WhitneySphere} as well as \cite[Section 5]{DR16_LagrIsoOfTori}. For completeness we recite the key details here. 

\begin{proof}
We begin by performing a Hamiltonian isotopy to disjoin $\overline{L}$ from the nodal conic $C_{nodal}$ as in \Cref{lem:nodaldoesntbreak}. We also choose $J_\infty$ generically as to satisfy \Cref{DR19_lem5.4} and proceed with our neck stretching procedure with the choices described in the previous section. SFT Compactness \cite{Bourgeois03_SFT} as well as \Cref{thm:DRfibr} give us broken pseudoholomorphic conics as limits from the moduli spaces $\mathcal{M}_{J_\tau}(v_1,v_2)$. If we fix one such broken conic, \Cref{lem:twotopplanes} tells us that this conic contains precisely two planes $A_i$, $i\in{1,2}$, in the top level where $A_i$ satisfies the tangency to $v_i$ at $q_i\in \ell_\infty$.

Let $\mathcal{M}_J(v_i)$ be the moduli space of $J$-holomorphic planes satisfying a tangency to $v_i$ at $q_i\in \ell_\infty$ and let $\mathcal{M}_J^{A_i}(v_i)$ be the component of that moduli space containing $A_i$. 

\begin{lemma}\label{lem:asympevalmap}
    The asymptotic evaluation map $\mathcal{M}_J^{A_i}(v_i)\to \Gamma \cong S^1$ which sends a plane to its asymptotic orbit is a diffeomorphism.
\end{lemma}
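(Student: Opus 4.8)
The plan is to show that the asymptotic evaluation map---call it $\mathrm{ev}\colon \mathcal{M}_J^{A_i}(v_i)\to\Gamma$, where $\Gamma = \Gamma_{\pm e_0}\cong S^1$ is the Morse--Bott circle of simple Reeb orbits lying over the geodesics of $\overline{L}$ of class $\pm e_0$ to which $A_i$ is asymptotic by \Cref{lem:twotopplanes}---is a smooth, proper local diffeomorphism of degree one, hence a diffeomorphism. For the first part: $A_i$ is embedded \cite[Lemma 5.2]{DR19_WhitneySphere}, hence somewhere injective, has Fredholm index one, is transversally cut out together with its tangency to $v_i$ at $q_i$ by \Cref{DR19_lem5.4}, and lies in the four-dimensional symplectic manifold $\C P^2\sm\overline{L}$. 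Automatic transversality in dimension four then shows that every plane in the same component is regular, so $\mathcal{M}_J^{A_i}(v_i)$ is a smooth $1$-manifold; applying the same reasoning with the asymptotic orbit pinned to a fixed $\gamma\in\Gamma$ (a codimension-one constraint, as $\dim\Gamma = 1$) shows $\mathrm{ev}^{-1}(\gamma)$ is transversally cut out of dimension $0$, cf.\ \cite[Section 5]{DR19_WhitneySphere}. Since the asymptotic Reeb orbit of a finite-energy punctured curve depends smoothly on the curve (exponential convergence to Reeb orbits), $\mathrm{ev}$ is smooth, and transversality of the fixed-orbit problem forces $d\,\mathrm{ev}$ to have trivial kernel at each plane; a map between $1$-manifolds with everywhere-injective differential is a local diffeomorphism.

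For properness: given $u_n\in\mathcal{M}_J^{A_i}(v_i)$ with $\mathrm{ev}(u_n) = \gamma_n\to\gamma$, SFT compactness \cite{Bourgeois03_SFT} produces a limiting pseudoholomorphic building. As in the proofs of \Cref{lem:nodaldoesntbreak} and \Cref{lem:e_0asymptotics}, positivity of intersection \cite{McDuff91_Pos_of_int} with $\ell_\infty$ (and with the fixed standard conic $C$) forces the top level to be a single plane through $q_i$ tangent to $v_i$; the remaining components lie in $\R\times S^*\overline{L}$ and $T^*\overline{L}$ and, by the description of finite-energy curves over the flat torus used throughout \cite[Section 5]{DR19_WhitneySphere}, can only be (multiple covers of) trivial cylinders over orbits in $\Gamma$, each of index $0$. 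As the total index is $1$ and every component has nonnegative index \cite[Lemma 5.3]{DR19_WhitneySphere}, no further nontrivial component appears, and since a trivial cylinder carries the same orbit at both ends the limit is an honest plane $u_\infty$ asymptotic to $\gamma$; it lies in $\mathcal{M}_J^{A_i}(v_i)$ because that set is a connected component, hence closed. Thus $\mathrm{ev}$ is proper, and a proper local diffeomorphism onto the connected $1$-manifold $\Gamma\cong S^1$ is a finite-sheeted covering; in particular $\mathcal{M}_J^{A_i}(v_i)\cong S^1$.

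For degree one: suppose $u\neq u'$ in $\mathcal{M}_J^{A_i}(v_i)$ were asymptotic to the same orbit $\gamma$. Using that the class $e_0$ of the geodesic under $\gamma$ bounds a chain in $\C P^2\sm(\ell_\infty\cup C)$ (\Cref{lem:correctgens}) and has vanishing symplectic area on any capping (\Cref{lem:exactgen}), one completes $u$ and $u'$ to homologous line-class configurations; positivity of intersection for asymptotically cylindrical curves then bounds their intersection number by $1$, whereas their common $1$-jet $v_i$ at $q_i$ already contributes at least $2$, a contradiction. (Equivalently, gluing $u$ and $u'$ for large $\tau$ to a common second plane and chain produces two distinct conics in $\mathcal{M}_{J_\tau}(v_1,v_2)$ forced to agree along a plane's worth of points, against the foliation property of \Cref{thm:DRfibr}.) Hence $\mathrm{ev}$ is injective, so it is a degree-one covering, i.e.\ a diffeomorphism. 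I expect this last step to be the crux: making the intersection count rigorous needs the relative (Siefring) intersection theory of asymptotically cylindrical curves, a careful accounting of the asymptotic winding contributions at the Morse--Bott orbits of $\Gamma$, and capping chains chosen disjoint from the other pieces so as to isolate the intersection between $u$ and $u'$; Steps one and two, by contrast, reduce to the cited automatic-transversality and SFT-compactness inputs plus the positivity-of-intersection bookkeeping already carried out for broken conics.
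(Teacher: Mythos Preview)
Your proposal is correct and follows the same three-step architecture as the paper: establish compactness/properness via SFT limits plus the exactness obstruction of \Cref{lem:exactgen}, obtain the local diffeomorphism from automatic transversality for index-one embedded planes, and then prove injectivity by an intersection-theoretic argument exploiting the tangency at $q_i$.

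The only substantive difference is in how injectivity is executed. The paper cites \cite[Lemma~5.13]{DR16_LagrIsoOfTori} directly: one analyzes the asymptotic operator at the puncture, observes that all positive eigenvalues have eigenfunctions with positive winding in the contact direction, and uses automatic transversality to perturb one of the two planes to a nearby orbit; the positive winding forces an interior intersection to appear near the puncture, which together with the order-$\geq 2$ tangency at $q_i$ overcounts the homological intersection and contradicts positivity. Your route---either completing $u,u'$ to line-class cycles and invoking Siefring's relative intersection theory to handle the common asymptotic, or gluing both to a common second plane and appealing to the foliation property of \Cref{thm:DRfibr}---reaches the same contradiction, but as you correctly flag, it requires carefully tracking asymptotic winding contributions and choosing cappings disjoint from the curves. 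The paper's approach is more economical because the perturbation step sidesteps the need for the full Siefring machinery: once the two planes have distinct (nearby) asymptotic orbits, the intersection count becomes classical. Your gluing alternative is also viable and has the appeal of reducing to the already-established \Cref{thm:DRfibr}, though making the ``agree along a plane's worth of points'' step rigorous again comes down to the same positivity argument.
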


\begin{proof}
    First we start by showing that this moduli space is compact. If that were not the case, the SFT limit of such planes would yield at least one plane in the top level $\C P^2 \sm \overline{L}$ which, by positivity of intersection, would be disjoint from $\ell_\infty$ as well as $C$. But then by \Cref{lem:correctgens}, such a plane would bound $\overline{L}$ along an asymptotic corresponding to a geodesic in the homology class $e_0\in H_1(\overline{L})$, contradicting \Cref{lem:exactgen}.

    Now by the automatic transversality result established in \cite[Theorem 1]{Wendl09_AutoTransversality}, since $\mathcal{M}_J^{A_i}(v_i)$ consists of embedded planes of index 1 (calculated when considering the tangency condition at $q_i\in \ell_\infty$) we have that this moduli space is transversely cut out and is submersive onto the space of orbits.

    Injectivity is finally established by the same argument as in \cite[Lemma 5.13]{DR16_LagrIsoOfTori}. By considering the asymptotic operator associated to the linearized Cauchy-Riemann operator at the puncture of one of these planes, it is shown that all positive eigenvalues of this operator have a component in the direction of the contact planes with a positive winding number. To that end, two planes that have the same asymptotic orbit but do not coincide gain an intersection number following a small holomorphic perturbation (where such a perturbation exists by the above automatic transversality result). This in turn will contradict positivity of intersection given the fact that both these planes are intersecting tangentially at $q_i\in \ell_\infty$.
\end{proof}

This tells us that for each plane in $\mathcal{M}_J^{A_1}(v_1)$, there exists a unique plane in $\mathcal{M}_J^{A_2}(v_2)$ such that both planes are asymptotic to the same Reeb orbit corresponding to the same unoriented geodesic on $\overline{L}$. Therefore combining the compactifications of both planes inside $\C P^2$ creates a degree two sphere intersecting $\overline{L}$ precisely along this geodesic. This sphere is not necessarily smooth at this geodesic (though it is smooth away from it), but this can be fixed by the smoothing procedure from \cite[Section 5.3]{DR16_LagrIsoOfTori}. As a result, after modifying the planes in both $\mathcal{M}_J^{A_i}(v_i)$, $i\in \{1,2\}$, inside the subset $U\sm \overline{L}\sse \C P^2$ where $U$ is an arbitrarily small neighborhood of $\overline{L}$ we can assume that the planes in these moduli spaces combine to form an $S^1$-family of smooth degree two spheres in $\C P^2$, each intersecting $\overline{L}$ along a closed curve while being tangent to $v_i$ at $q_i$. Moreover, the spheres are in fact symplectic and given that the space of tame almost complex structures on $\C P^2$ is contractible, these spheres can be assumed to be pseudoholomorphic for some global almost complex structure on $(\C P^2, \omega_{FS})$.

Now from \Cref{thm:DRfibr} we obtain a global symplectic fibration $(\C P^2 \sm \ell_\infty, \omega_{FS})\to \C$. Through \Cref{thm:DRnormalizedfibr}, this fibration can be normalized to guarantee that the nodal conic $C_{nodal}^J$ agrees with the standard nodal conic $f^{-1}(0)$ near $\ell_\infty$ and is furthermore given by the preimage $C_{nodal}^J=f_J^{-1}(0)$. Then the symplectomorphism $\varphi$ as defined in \Cref{sec:setting} and restriction to $(B^4_1, \omega_0)$ gives us a symplectic fibration $f_{J}:(B^4_1, \omega_0)\to \C$ compatible with our original exact Lagrangian filling $L$. Given that we began by disjoining $\overline{L}$ from the nodal conic, we conclude that the simple curve $\sigma:=f_J(L)\sse \C$ is disjoint from $\{0\}\in \C$. The claim about extended winding number then follows from the fact that these curves are embedded.

Finally, we claim that $L$ intersects any fiber $f_J^{-1}(s)$ for $s\in \sigma$ along an exact Lagrangian closed curve of the fiber. Given that the fibration remains standard outside of the compact set $M$, the results of \Cref{lem:compends} still hold for the fibration $f_J$ as well. Then given that exactness is truly a homological property and $L$ intersects the fibers along curves in the same homology class $\Tilde{e_0}\in H_1(L)$, the intersection $L\cap f^{-1}(s)$ must be an exact Lagrangian of the fiber for all $s\in \sigma$.
\end{proof}

\begin{remark}\label{rmk:notonlyzerosection} The final paragraph of this proof does \textit{not} say that $L$ intersects each fiber $f_J^{-1}(s)$, $s\in \sigma$, precisely along the zero section of the fiber. Only near the ends of $L$ can we guarantee such a result. Fixing this will require some additional work that we tackle in \Cref{sec:vertiso}.
\end{remark}

\section{Hamiltonian Isotopies for Fibered Lagrangians}\label{sec:hamiso}
Following the previous section, through this compatible pseudoholomorphic fibration, we have now obtained a manageable description of our Lagrangian filling $L$. What is left to establish is a Hamiltonian isotopy between $L$ and one of the standard fillings $L_{Cl}$ or $L_{Ch}$ as defined in \Cref{sec:std_fillings}. We will do this in three parts: $(1)$ a Hamiltonian isotopy that brings the pseudoholomorphic fibration established in the previous section back to the standard Lefschetz fibration while maintaining compatability with our Lagrangian; $(2)$ a treatment of the fibers of the fibration to establish that any filling with a representative curve $\sigma$ is Hamiltonian isotopic to the standard exact Lagrangian fibered over $\sigma$, i.e. the Lagrangian that intersects every fiber over $\sigma$ precisely on the zero section; and finally $(3)$ a Hamiltonian isotopy in the base of the fibration to bring the representative curve $\sigma$ to one of the standard curves $\sigma_{Cl}, \sigma_{Ch} \sse \C\sm \{0\}$ as described in \Cref{sec:std_fillings}.

\subsection{Going Back to the Standard Lefschetz Fibration}\label{sec:backtolefsch}
What we have up to this point is a pseudoholomorphic conic fibration that is compatible with our given Lagrangian filling $L$. However, we want to argue that $L$ is Hamiltonian isotopic to either $L_{Cl}$ or $L_{Ch}$ which we have only defined with respect to the standard Lefschetz fibration. To that end, we want to deform our pseudoholomorphic fibration back to the standard one while maintaining compatability with $L$. 

This is akin to the so-called Symplectic Isotopy Problem which asks whether any pseudoholomorphic curve (in $\C P^2$) is symplectically isotopic to a standard algebraic curve. Such a result is true for degree 2 curves \cite{Gromov85_PseudoholCurves} and in turn is true for all curves in $\mathcal{M}_J(v_1,v_2)$ for any tame almost complex structure $J$. The general idea behind the proof of such a statement is to argue that for the appropriate linearized $\overline{\partial}$-operator to the space of all tame almost complex structures on $(\C P^2, \omega_{FS})$, one can readily find a path of regular values from any arbitrary tame almost complex structure to the standard complex structure $J_0=i$.

But if we want to consider a symplectic isotopy of the entire fibration rather than just an individual fiber, we need to guarantee that there exists a path $J_t$ of almost complex structures such that every fiber in the fibration (except for the nodal conic) is regular for every $J_t$. Due to the work of \cite{DR19_WhitneySphere}, we do have such a guarantee:

\begin{lemma}[Lemma 4.4 in \cite{DR19_WhitneySphere}]\label{lem:DR19_lem4.4} Any smooth conic (i.e. a conic which is neither nodal nor a branched cover) in $\mathcal{M}_J(v_1,v_2)$ is a regular solution to this moduli problem for any arbitrary tame $J$.
\end{lemma}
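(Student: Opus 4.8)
The plan is to reduce this to the \emph{automatic transversality} phenomenon for embedded pseudoholomorphic spheres in $4$-dimensional symplectic manifolds, in the form that accommodates point- and tangency-constraints; see \cite{Wendl09_AutoTransversality}. Fix a tame $J$ and let $C\in\mathcal{M}_J(v_1,v_2)$ be a smooth conic. Since $C$ is a smooth, hence embedded, $J$-holomorphic sphere in the class $2[\ell]\in H_2(\C P^2)$, it has a genuine normal bundle $N_C\to C\cong\C P^1$; by adjunction $\deg N_C=\langle c_1(T\C P^2),2[\ell]\rangle-\chi(S^2)=6-2=4$ (equivalently $[C]\cdot[C]=4$).

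First I would isolate the operator that governs $\mathcal{M}_J(v_1,v_2)$ near $C$. Splitting the linearized Cauchy--Riemann operator at $C$ into tangential and normal parts, regularity of $C$ for the \emph{unconstrained} conic problem is equivalent to surjectivity of the normal operator $D_{N_C}$, a real-linear Cauchy--Riemann operator on sections of $N_C$ (whose real index $2(\deg N_C+1)=10$ equals the dimension of the space of smooth conics). Imposing that $C$ pass through $q_i$ with tangent line $\ell_i$ --- i.e.\ contact of order two with $\ell_i$ at $q_i$ --- translates, as in the standard treatment of tangency conditions, into requiring the normal section to vanish to second order at the two preimages $z_i:=C^{-1}(q_i)$. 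So the operator controlling $\mathcal{M}_J(v_1,v_2)$ at $C$ is the restriction of $D_{N_C}$ to sections of the twisted line bundle $N_C(-2z_1-2z_2)$, of degree $4-2-2=0$ and real index $2$, consistent with $\dim\mathcal{M}_J(v_1,v_2)=2$ (the conics sweep out a real $4$-manifold with a $2$-dimensional leaf space, cf.\ \Cref{thm:DRfibr}).

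The crux --- and the only place the hypothesis ``smooth conic'' is used --- is then the classical fact that every real-linear Cauchy--Riemann operator on a complex line bundle of degree $\geq-1$ over $\C P^1$ is onto: by the similarity principle its cokernel is identified with the kernel of a conjugate-linear Cauchy--Riemann operator on a line bundle of degree $2g-2-\deg=-2-\deg<0$, which is trivial since a nonzero solution would have only positive-order zeros and hence force nonnegative degree. As $N_C(-2z_1-2z_2)$ has degree $0\geq-1$, the constrained normal operator is surjective, so $C$ is transversely cut out --- a regular solution of the moduli problem --- for every tame $J$, which is the assertion of the lemma.

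I would finish by recording why the two excluded cases must be excluded, so that the statement is sharp. A nodal conic $\ell_1\cup\ell_2$ is not immersed at its node, so the normal-bundle splitting above does not model its deformations as a point of $\mathcal{M}_J(v_1,v_2)$; a double branched cover of a line is not somewhere injective, so automatic transversality does not apply and such covers are in fact non-regular in general. The main obstacle in writing this out carefully is a purely bookkeeping one: pinning down the exact order to which each tangency constraint forces the normal section to vanish --- hence the precise degree of the twisted line bundle --- since an off-by-one there is exactly the difference between sitting inside the range $\deg\geq-1$ where automatic transversality applies and falling outside it. The remaining ingredients are cited directly from the literature.
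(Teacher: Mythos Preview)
The paper does not supply its own proof of this lemma; it is quoted verbatim as Lemma~4.4 of \cite{DR19_WhitneySphere} and used as a black box. Your sketch is correct and is precisely the automatic-transversality argument underlying the cited result: pass to the normal operator on the embedded sphere, encode each ``point plus prescribed tangent'' constraint as a second-order vanishing condition on normal sections (so the relevant line bundle is $N_C(-2z_1-2z_2)$ of degree $0$), and apply the Hofer--Lizan--Sikorav/Wendl criterion that a real-linear Cauchy--Riemann operator on a complex line bundle of degree $\geq -1$ over $\C P^1$ is automatically surjective. Your index count ($10-4-4=2$) and your remarks on why nodal and multiply-covered conics must be excluded are also correct. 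There is nothing further to compare against in the present paper.
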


This result was necessary for proving \Cref{thm:DRfibr}, and it in turn also gives us a normalization of our fibration and establishes a symplectic isotopy back to the standard Lefschetz fibration. Precisely, we have the following theorem at our disposal:

\begin{thm}[Theorem 4.6 in \cite{DR19_WhitneySphere}]\label{thm:DRnormalizedfibr}
Suppose we are given a pseudoholomorphic conic fibration $f_J:(\C P^2\sm\ell_\infty,\omega_{FS})\to \C$ as in \Cref{thm:DRfibr}, where $f_J$ is standard outside of some compact subset of $\C P^2\sm\ell_\infty$. It is possible to find a one-parameter family $f_{J_t}$, $t\in[0,1]$, of such conic fibrations, where $J_0=i$ and $f_{J_0}=f$, such that:
\begin{enumerate}
\item the fibers of $f_{J_1}$ coincide with the fibers of $f_J$ outside of some arbitrarily small neighborhood of the two points $q_1,q_2\in \ell_\infty$. Moreover, the almost complex structure $J_1$ can be taken to coincide with $J$ outside of an even smaller subset of that same neighborhood;
\item the nodal conics $C_{nodal}^{J_t}$ coincide with the standard nodal conic $C_{nodal}=f^{-1}(0)$ near $\ell_\infty$ and are given as the preimage $C_{nodal}^{J_t}=f_{J_t}^{-1}(0)$ for all $t$;
\item $f_{J_t}=f$ holds outside of some compact subset.
\end{enumerate}
\end{thm}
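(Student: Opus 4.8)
\emph{Proof strategy.} The plan is to connect the given fibration to the standard one by dragging the almost complex structure, using that \Cref{thm:DRfibr} (in the stated family form) produces a conic fibration for \emph{every} tame almost complex structure and does so smoothly in parameters. Let $\mathcal{J}_M$ denote the space of $\omega_{FS}$-tame almost complex structures on $\C P^2$ that agree with the integrable $i$ outside the compact set $M$ from \Cref{rmk:defineM}; since the tame linear complex structures on a symplectic vector space form a contractible set, $\mathcal{J}_M$ is a contractible space of sections (and likewise for the subspace agreeing with a fixed section outside $M$). Enlarging $M$ so that the hypothesis ``$f_J$ standard outside a compact subset of $\C P^2\sm\ell_\infty$'' puts $J$ into $\mathcal{J}_M$, I would choose a smooth path $\{J_t\}_{t\in[0,1]}\sse\mathcal{J}_M$ with $J_0=i$ and $J_1=J$, and feed it into \Cref{thm:DRfibr} to get a smooth family of conic fibrations $f_{J_t}\colon(\C P^2\sm\ell_\infty,\omega_{FS})\to\C$ with $f_{J_0}=f$, each submersive away from a single nodal fiber $C_{nodal}^{J_t}$.

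Next I would fix the identification of the target $\C$ along the path so that items (1)--(3) hold. Outside $M$ every $J_t$ equals $i$, so the fibers of $f_{J_t}$ there are the standard conics; requiring the leaf-space identification to restrict to $f$ on that region pins it down uniquely and yields $f_{J_t}=f$ outside $M$, which is (3) and which at $t=1$ forces $f_{J_1}$ to agree with the given $f_J$ outside $M$, hence outside any prescribed neighborhood of $q_1,q_2$, giving the fiber statement in (1). For (2), the conic $C_{nodal}^{J_t}$ is the union of the two $J_t$-holomorphic lines tangent to $v_i$ at $q_i$; as $J_t=i$ near $\ell_\infty$ and a holomorphic line is determined by its $1$-jet at a point, these lines coincide with $\ell_1,\ell_2$ near $\ell_\infty$, so $C_{nodal}^{J_t}$ matches $C_{nodal}=f^{-1}(0)$ there, and post-composing the identification with the smoothly varying translation carrying the value of the nodal leaf to $0$ arranges $C_{nodal}^{J_t}=f_{J_t}^{-1}(0)$ without disturbing (3). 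To get the extra clause of (1), namely that $J_1$ itself may be taken equal to $J$ away from an even smaller neighborhood of $q_1,q_2$, I would interpolate $J_1$ back to $J$ outside such a neighborhood; this changes nothing near $\ell_\infty$, where the curves are already standard and holomorphic, and the fixed-point/tangency conditions at $q_i$ stay transversely cut out by \Cref{lem:DR19_lem4.4}, so $f_{J_1}$ is still a genuine conic fibration.

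The hard part will be the middle step: verifying that $J\mapsto f_J$ is genuinely smooth as a map into conic fibrations with a \emph{consistent} leaf-space parametrization, i.e.\ that along the entire path no second nodal or branched-cover fiber appears and the leaf space stays diffeomorphic to $\C$. This is precisely where one leans on \Cref{lem:DR19_lem4.4} (every smooth conic is automatically regular for any tame $J$, so the moduli spaces $\mathcal{M}_{J_t}(v_1,v_2)$ vary smoothly with no wall-crossing), together with the uniqueness of the pair of $J_t$-lines through $q_1,q_2$ and the trichotomy for limiting conics recalled in \Cref{conicpencil}, run through a Gromov-compactness bootstrap showing that the nodal locus can neither migrate onto $\ell_\infty$ nor split off an additional singular fiber. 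Granting that smoothness and persistence, the normalizations in the previous paragraph are just post-composition with diffeomorphisms of the base and cause no further trouble.
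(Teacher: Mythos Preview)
The paper does not prove this theorem; it is quoted verbatim from \cite{DR19_WhitneySphere} and used as a black box in the proofs of \Cref{lem:compmain} and \Cref{lem:backtolefsch}. So there is no ``paper's proof'' to compare against, and I will evaluate your proposal on its own terms.

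Your outline has a genuine gap, and it sits exactly where you try to extract items (2) and (3). You assert that ``outside $M$ every $J_t$ equals $i$, so the fibers of $f_{J_t}$ there are the standard conics,'' and likewise that the $J_t$-holomorphic lines comprising $C_{nodal}^{J_t}$ coincide with $\ell_1,\ell_2$ near $\ell_\infty$ because ``a holomorphic line is determined by its $1$-jet at a point.'' Neither inference is valid. A $J_t$-holomorphic conic (or line) is only \emph{locally} $i$-holomorphic in $\C P^2\sm M$; in the affine chart at $q_1$ with coordinates $(u,v)=(Z_2/Z_1,Z_3/Z_1)$ the tangency to $v_1$ forces only $g(0)=g'(0)=0$ in a local parametrization $u=g(v)$, while all higher Taylor coefficients are unconstrained. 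So the leaf through $q_1$ need not agree with any fiber of $f$ near $\ell_\infty$, and your proposed leaf-space identification ``restrict to $f$ outside $M$'' is not well posed. The hypothesis that $f_J$ itself is standard outside a compact set pins down the picture at $t=1$, and $t=0$ is trivial, but nothing in your argument propagates this along the interior of the path.

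This is precisely the content the theorem is packaging: one must \emph{produce} the normalization, not merely relabel the base. A correct argument has to either (a) straighten the family of foliations near $\ell_\infty$ by a smooth family of compactly supported symplectomorphisms (built, say, from a Moser-type argument for the $S^1$-family of symplectic annuli cut out on a small sphere around each $q_i$), after which the pushed-forward almost complex structures give the desired $J_t$ with $f_{J_t}=f$ outside a compact set; or (b) constrain the path $J_t$ from the outset so that the conics stay standard outside a fixed compact set, which again requires a construction and not just contractibility of $\mathcal{J}_M$. Your final paragraph correctly identifies the analytic input (automatic regularity from \Cref{lem:DR19_lem4.4} and Gromov compactness to preclude extra singular fibers), but that input controls the \emph{smoothness} of $t\mapsto f_{J_t}$, not the normalization near infinity that (1)--(3) demand.
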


This result gives us a straightforward approach to our goal, which is to establish the following lemma:

\begin{lemma}\label{lem:backtolefsch}
For any exact Lagrangian filling $L$ of $\Lambda$, there exists a Hamiltonian isotopy of $(B^4,\omega_{0})$ so that $L$ becomes compatible with the standard Lefschetz fibration.
\end{lemma}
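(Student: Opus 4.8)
The plan is to combine the normalization result \Cref{thm:DRnormalizedfibr} with the standard trick that a symplectic isotopy of fibrations (fixing the boundary behaviour) can be implemented by an ambient Hamiltonian isotopy that preserves compatibility. Starting from the compatible pseudoholomorphic conic fibration $f_J\colon (B^4_1,\omega_0)\to\C$ produced in \Cref{lem:compmain}, extended to $(\C P^2\sm\ell_\infty,\omega_{FS})$, apply \Cref{thm:DRnormalizedfibr} to obtain a one-parameter family $f_{J_t}$, $t\in[0,1]$, of conic fibrations with $f_{J_0}=f$ the standard Lefschetz fibration, $f_{J_1}$ agreeing with $f_J$ away from arbitrarily small neighborhoods of $q_1,q_2\in\ell_\infty$, all standard outside a compact subset, and with the nodal conic rigidly tracked as $f_{J_t}^{-1}(0)$. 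The goal is to upgrade this \emph{family of fibrations} into a \emph{Hamiltonian isotopy} of $(\C P^2\sm\ell_\infty,\omega_{FS})$, hence (after restricting via $\varphi$) of $(B^4_1,\omega_0)$, under which the image of $L$ becomes compatible with $f_{J_0}=f$.

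First I would construct, for each $t$, a symplectomorphism $\Phi_t\colon (\C P^2\sm\ell_\infty,\omega_{FS})\to(\C P^2\sm\ell_\infty,\omega_{FS})$ with $\Phi_0=\id$ that intertwines the fibrations, i.e. $f_{J_0}\circ\Phi_t$ has the same fibers as $f_{J_t}$ (equivalently $\Phi_t$ carries the leaves of the foliation by $f_{J_t}$-conics to those of $f$). This is the standard ``fibration is determined up to symplectomorphism by its leaves'' argument: on each symplectic leaf the restriction of $\omega_{FS}$ is an area form with fixed total area (the leaves are homologous degree-two spheres minus two points), and the symplectic connection (symplectic orthogonal to the fibers) together with a choice of identification of the base curves integrates to give the family $\Phi_t$; smoothness in $t$ comes from \Cref{lem:DR19_lem4.4}, which guarantees every smooth fiber is regular along the whole path, so the moduli spaces vary smoothly and the leaves fit into a smooth family. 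Because $f_{J_t}=f$ outside a compact subset and the nodal conics are tracked rigidly near $\ell_\infty$, one can arrange $\Phi_t$ to be compactly supported inside $M$ (or an arbitrarily small enlargement), in particular supported away from $\ell_\infty$ and the ends, so it descends to a compactly supported symplectomorphism of $(B^4_1,\omega_0)$.

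Next I would promote $\{\Phi_t\}$ to a Hamiltonian isotopy. Since $\Phi_0=\id$ and the family is smooth and compactly supported in a ball, the generating vector fields $X_t=\dot\Phi_t\circ\Phi_t^{-1}$ are symplectic with compact support in $B^4_1$; as $H^1_c(B^4_1;\R)=0$, each $\iota_{X_t}\omega_0$ is exact, so $X_t$ is globally Hamiltonian and $\{\Phi_t\}$ is a compactly supported Hamiltonian isotopy. Now $\Phi_1$ carries $L$ to a Lagrangian $L':=\Phi_1(L)$ that is compatible with $f_{J_0}=f$: indeed $f\circ\Phi_1$ restricted to $L$ equals $f_{J_1}|_L$ up to a diffeomorphism of the base curve, and $f_{J_1}|_L$ is an $S^1$-bundle over an embedded curve by \Cref{lem:compmain}; hence $f|_{L'}$ is an $S^1$-bundle over the embedded curve $\Phi_1(\sigma)$. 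Finally one checks the ends are unaffected: both $\Phi_t$ and the normalization are the identity outside $M$, so $\text{Ends}(L')=\text{Ends}(L)=(T,\infty)\times\Lambda$, i.e. $L'$ remains an exact Lagrangian filling of $\Lambda$, and its intersection with each fiber near the ends is still the zero section by \Cref{lem:compends}. Composing the Hamiltonian isotopy disjoining $\overline L$ from $C_{nodal}$ (already used in proving \Cref{lem:compmain}) with $\{\Phi_t\}$ yields the desired Hamiltonian isotopy of $(B^4,\omega_0)$ making $L$ compatible with the standard Lefschetz fibration.

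\textbf{Main obstacle.} The delicate point is the construction of the family $\Phi_t$ and control of its support: one must genuinely use \Cref{lem:DR19_lem4.4} (regularity of all smooth fibers along the path) to get a \emph{smooth} family of foliations, use \Cref{thm:DRnormalizedfibr}(2)--(3) to keep everything standard near $\ell_\infty$ and outside a compact set so that the isotopy is compactly supported in $B^4_1$ rather than only in $\C P^2$, and verify that the parallel-transport/Moser-type construction can be carried out so that the base curves are matched up coherently without introducing monodromy that would obstruct compatibility of $L$. Everything downstream — exactness of $\iota_{X_t}\omega_0$, preservation of the ends, compatibility of $L'$ with $f$ — is then routine.
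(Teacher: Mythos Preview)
Your proposal is correct and follows essentially the same approach as the paper: obtain the family $f_{J_t}$ from \Cref{thm:DRnormalizedfibr}, convert it into an ambient compactly supported symplectic isotopy intertwining the fibrations, and upgrade to a Hamiltonian isotopy so that the image of $L$ becomes compatible with $f$. The only cosmetic difference is that the paper outsources the construction and Hamiltonian upgrade of $\Phi_t$ to \cite[Proposition 0.3]{ST05_Genus2LefschetzFibr}, whereas you build $\Phi_t$ by hand via the symplectic connection and invoke $H^1_c(B^4_1;\R)=0$; your identification of the delicate points (smoothness via \Cref{lem:DR19_lem4.4}, support control via \Cref{thm:DRnormalizedfibr}(2)--(3)) is exactly right.
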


\begin{proof}
For this argument, it is still beneficial to consider the compactification of $L$ into the Lagrangian torus $\overline{L}$ inside $(\C P^2\sm \ell_\infty,\omega_{FS})$. The proof of \Cref{lem:compmain} allows us to obtain a compatible pseudoholomorphic conic fibration $f_{J_1}:((\C P^2\sm \ell_\infty,\omega_{FS})\to \C$ with respect to $\overline{L}$. Now we apply \Cref{thm:DRnormalizedfibr} to give us a family $f_{J_t}$, $t\in [0,1]$, of conic fibrations with $J_0=i$ and $f_{J_0} = f$. 

And since $J_1$ is standard outside of $M\sse \C P^2\sm \ell_\infty$, we can assume every $J_t$ to be standard outside that same subset. Thus, following the symplectic identification of $(\C P^2,\omega_{FS})\cong (B^4_{10},\omega_0)$ and a restriction to $(B^4_1,\omega_0)$, we obtain a symplectic isotopy of $(B^4_1,\omega_0)$ through symplectic conic fibrations, all of which are compatible with $L$. This final statement is a consequence of the fact that symplectic isotopies preserve Lagrangians along with the fact that $L$ intersects each individual fiber along a closed curve contained in a compact subset of the fiber. Moreover, this symplectic isotopy can be upgraded to a Hamiltonian isotopy due to \cite[Proposition 0.3]{ST05_Genus2LefschetzFibr}, which in turn tells us that such an isotopy preserves exact Lagrangians.
\end{proof}

\begin{remark}
Given that $C_{nodal}^{J_t}$ remains fixed as the fiber $f_{J_t}^{-1}(0)$ for all $t\in[0,1]$, we note that while the curve $f_{J_t}(L)$ will deform through this isotopy, its extended winding number remains fixed.
\end{remark}

\subsection{The Vertical Isotopy}\label{sec:vertiso}
We have now established that after a Hamiltonian isotopy, any exact Lagrangian filling $L$ of $\Lambda$ is compatible with the standard Lefschetz fibration. However, following the discussion in \Cref{rmk:notonlyzerosection}, outside of the ends we only know that $L$ intersects the fibers along arbitrary closed curves (though still exact Lagrangian submanifolds of the fiber). Our aim now is to construct a Hamiltonian isotopy that brings $L$ fiberwise to the zero section. 

Such a result relies on the Nearby Lagrangian Conjecture for $T^*S^1$, a conjecture of Arnol'd that states that any closed, exact Lagrangian submanifold of the cotangent bundle of a closed manifold is Hamiltonian isotopic to the zero section. The statement is known to be true for only a handful of spaces, one of which is $T^* S^1$. Given that we want to use this result on a whole one parameter family of fibers, we include a straightforward proof of the result to highlight that such an isotopy relies entirely on the embedding of the closed, exact Lagrangian curve inside $T^* S^1$.

\begin{prop}\label{lem:NLC}
        (Nearby Lagrangian Conjecture for $T^*S^1$) Every closed, exact Lagrangian submanifold of $(T^*S^1,d\lambda_{S^1})$ is Hamiltonian isotopic to the zero section.
\end{prop}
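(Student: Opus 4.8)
The plan is to prove this by an explicit two-step normalization of the exact Lagrangian curve $\gamma \subset (T^*S^1, d\lambda_{S^1})$, using only the fact that $\gamma$ is a closed embedded exact Lagrangian (equivalently, a closed embedded curve enclosing zero signed area with respect to $d\lambda_{S^1} = dp\wedge d\theta$). First I would observe that, since $\gamma$ is Lagrangian of dimension one it is automatically isotropic, so the only real content is exactness and embeddedness; and since $H_1(T^*S^1) = \Z$, such a $\gamma$ is either null-homotopic or wraps once around the $S^1$ direction (higher winding is excluded by embeddedness). I would treat the ``wrapping once'' case as the main case — the null-homotopic case can be ruled out because a null-homotopic embedded curve bounds a disk in $T^*S^1$, and exactness forces the $d\lambda_{S^1}$-area of that disk to vanish, but an embedded curve bounding a disk of zero area would have to be a point; alternatively one argues it cannot be asymptotic to or fill anything consistent with the setup. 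So assume $\gamma$ is homologous to the zero section $0_{S^1} = \{p=0\}$.

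The core step: I would first make $\gamma$ a \emph{graph} over the base $S^1$. Consider the Liouville/dilation vector field $Z = p\,\partial_p$ whose flow $\phi^t_Z$ scales the fibers by $e^t$; this is a conformal symplectic (Liouville) flow, not Hamiltonian, but I only need it as an auxiliary deformation to set up a positivity argument, or — better — I would instead directly use a Hamiltonian of the form $H(p,\theta)$ depending only on $p$, whose flow shears $\theta \mapsto \theta + tH'(p)$ and leaves $p$ fixed, to control the geometry. Concretely: because $\gamma$ winds once around and is embedded, away from finitely many points it is locally a graph $p = u(\theta)$; the failure points are where $\gamma$ is ``vertical''. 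A cleaner route is the classical one: any embedded closed curve homologous to the zero section in $T^*S^1$ that moreover is exact must in fact already be isotopic \emph{through embedded curves} to a graph, because one can push it by the Hamiltonian flow of a function supported near the vertical tangencies to remove them (a curve-shortening-type or explicit local model argument), and embeddedness is preserved since we only move points in a controlled neighborhood. Having reduced to a graph $\gamma = \{(\,u(\theta),\theta\,)\}$, exactness says $\int_{S^1} u(\theta)\,d\theta = \int_\gamma \lambda_{S^1} = \int_\gamma dg = 0$. Then the linear Hamiltonian isotopy generated by $H_s(p,\theta) = -s\!\int_0^\theta u$ — more precisely the straight-line homotopy $u_s = (1-s)u$ realized as the time-one map of the Hamiltonian $H(p,\theta)$ whose graph-interpolation is standard — carries $\gamma$ to the zero section through graphs, hence through embedded exact Lagrangians. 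Since an exact Lagrangian isotopy extends to an ambient compactly supported Hamiltonian isotopy (cited in the paper as \cite[Theorem 3.6.7]{Oh15_SymplTopFloerHom}), we are done.

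I would organize the write-up as: (1) homological dichotomy and disposal of the null-homotopic case via the zero-area argument; (2) a lemma removing vertical tangencies by a local Hamiltonian push, yielding a graph — this is the step I expect to require the most care, since one must verify embeddedness is preserved and that only finitely many tangencies occur (genericity or a direct argument that a non-generic configuration can first be perturbed away by an exact Lagrangian isotopy); (3) the area computation identifying exactness of the graph with $\int_{S^1} u = 0$; (4) the linear interpolation $u_s = (1-s)u$ as an explicit exact Lagrangian isotopy to the zero section, together with its realization as a Hamiltonian isotopy. The main obstacle is step (2): turning an abstract embedded exact $\gamma$ into a graph. I would handle it by noting that the vertical tangency set is where $\dot\theta = 0$ along a parametrization; a small Hamiltonian perturbation makes these nondegenerate hence finite, and then near each such point a model Hamiltonian of the form $\chi(\theta)\psi(p)$ (bump functions) tilts $\gamma$ to eliminate the tangency while staying embedded because the perturbation is supported in a thin box meeting $\gamma$ in a single arc. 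Alternatively, if one prefers to avoid genericity entirely, one can invoke that the graph property is equivalent to $\gamma$ being disjoint from its own images under the fiberwise translation flow for small times, a condition one can arrange directly; I would present whichever is shortest given the conventions already fixed in \Cref{sec:setting}.
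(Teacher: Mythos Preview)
Your proof is correct, but it takes a more laborious route than the paper's. The paper never reduces $\gamma$ to a graph. Instead it observes that any embedded closed curve in the cylinder wrapping once around is smoothly isotopic to the zero section through embedded curves $\varphi_t$ (trivially Lagrangian in dimension one), and then \emph{corrects} this possibly non-exact isotopy by composing with the fiberwise translation $\tau_{A_t}(p,\theta)=(p+A_t,\theta)$, where
\[
A_t \;=\; -\,\frac{\displaystyle\int_{\varphi_t(S^1)} p\,d\theta}{\displaystyle\int_{\varphi_t(S^1)} d\theta}
\]
is exactly the shift making $\varphi_t(S^1)$ exact. Since $A_t$ depends smoothly on the embedding and vanishes at $t=0,1$, the composite $\tau_{A_t}\circ\varphi_t$ is an exact Lagrangian isotopy, hence extends to a Hamiltonian isotopy. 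This one-line correction entirely bypasses your step~(2), which you rightly flag as the most delicate part of your argument: removing vertical tangencies by local Hamiltonian pushes is doable but fiddly to make airtight (embeddedness, finiteness, global exactness all need tracking). Your approach buys a very explicit final isotopy (linear interpolation of graphs), while the paper's buys a shorter proof that works uniformly without ever needing the curve to be graphical. Your treatment of the homological dichotomy is also more careful than the paper's, which tacitly assumes winding number one.
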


\begin{proof}
    Fix coordinates $(\theta,p)\in T^*S^1\cong S^1\times \R$ with Liouville form $\lambda=pd\theta$ and canonical symplectic form $d\lambda = dpd\theta$. Note that a closed, exact Lagrangian submanifold $\varphi:S^1\to (T^*S^1,d\lambda)$ has zero symplectic area. That is, the following is satisfied:
    $$\int_{\varphi(S^1)} pd\theta = \int_{S^1}\varphi^*(pd\theta) = 0.$$
    This symplectic area can be thought of as the difference between the area above and below the zero section. Now let $S$ be a non-exact Lagrangian submanifold of $T^* S^1$. Crucially, there exists some equator of $T^*S^1$ such that the area spanned by $S$ above and below this equator is equal. Any equator of $T^*S^1$ can be represented by some closed $1$-form $Ad\theta\in \Omega^1 (S^1)$, $A\in \R$. Thus the vertical translation by this section defines a symplectomorphism $\tau_A:(T^*S^1,d(\lambda+Ad\theta))\to (T^*S^1,d\lambda)$, $\tau_A(p,\theta)=(p+A,\theta)$ where $\tau_A(S)$ is now an exact Lagrangian. 
    
    \begin{center}
	\begin{figure}[h!]
		\centering
		\includegraphics[scale=.8]{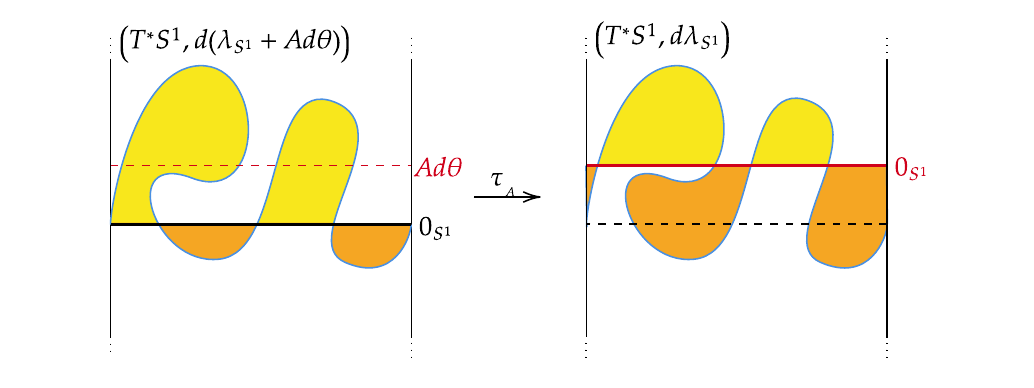}
		\caption{A non-exact Lagrangian submanifold of $(T^* S^1, d\lambda_{S^1})$ and the vertical translation by the closed 1-form $Ad\theta$ required to make it exact.\vspace{.75cm}}
		\label{fig:NLC_TS1}
	\end{figure}
\end{center}
    
    Computing for $A$, we see that $A$ depends only on the embedding of $S$:
    \begin{align*}
        0=\int_S\tau_A^*(pd\theta) = \int_{S}pd\theta&+Ad\theta=\int_S pd\theta+A\int_S d\theta,
    \end{align*}
    $$        A = -\frac{\int_S pd\theta}{\int_S d\theta}$$
    Note that if the embedding is graphical, then $A$ is precisely the negative of the symplectic area of $S$. Now let $\varphi:S^1\to (T^*S^1,d\lambda)$ be a closed, exact Lagrangian. Such an embedding is Lagrangian isotopic to the zero section; denote that isotopy by $\varphi_t:S^1\to T^*S^1$ with $\varphi_0=\varphi$ and $\varphi_1=\text{Id}_{S^1}$. Now $\varphi_t$ may not be through exact Lagrangians, but the value
    $$        A_t := -\frac{\int_{\varphi_t(S^1)} pd\theta}{\int_{\varphi_t(S^1)} d\theta}$$
    varies smoothly through the isotopy with $A_0=A_1=0$ since we are starting and ending at exact Lagrangians for the standard Liouville form. As such, the composition $\tau_{A_t}\circ \varphi_t:S^1\to (T^*S^1,d\lambda)$ is now a smooth, well-defined isotopy where at every $t$, $\tau_{A_t}\circ \varphi_t(S^1)$ is an exact Lagrangian submanifold. Therefore by a standard fact, this exact Lagrangian isotopy extends to a global Hamiltonian isotopy \cite[Theorem 3.6.7]{Oh15_SymplTopFloerHom}.
\end{proof}

\begin{lemma}\label{lem:hamisotozerosection} Suppose an embedded Lagrangian filling $L$ of $\Lambda$ is compatible with the standard Lefschetz fibration $\Tilde{f}:(B_1^4,\omega_0)\to \C$ over the curve $\sigma=f(L)\sse \C\sm \{0\}$. Then there exists a Hamiltonian isotopy so that $L$ intersects each fiber over $\sigma$ precisely along the zero section.
\end{lemma}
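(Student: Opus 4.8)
The plan is to prove \Cref{lem:hamisotozerosection} by applying the Nearby Lagrangian Conjecture for $T^*S^1$ (\Cref{lem:NLC}) fiberwise over $\sigma$ and assembling the resulting fiberwise isotopies into one compactly supported Hamiltonian isotopy of $(B^4,\omega_0)$. First I would set up the fiberwise picture. For $s\in\sigma$ put $K_s:=L\cap\tilde{f}^{-1}(s)$; since $L$ is compatible with $\tilde{f}$ these form a smooth family of closed curves, one in each fiber $(\tilde{f}^{-1}(s),d\lambda_0|_{\tilde{f}^{-1}(s)})$, and by \Cref{lem:compends} each $K_s$ is the zero section once $s$ lies in an end of $\sigma$. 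Fix a smooth family of symplectic trivializations $\Phi_s:(\tilde{f}^{-1}(s),d\lambda_0|_{\tilde{f}^{-1}(s)})\xrightarrow{\ \sim\ }(T^*S^1,d\lambda_{S^1})$, e.g.\ by symplectic parallel transport of $\tilde{f}$ along $\sigma$ normalized to agree near the ends with the identification of \Cref{sec:setting}; then $K'_s:=\Phi_s(K_s)$ is a smooth family of closed \emph{exact} Lagrangian curves in $T^*S^1$ (exactness carried over from the last clause of \Cref{lem:compmain}) with $K'_s=0_{S^1}$ for $s$ in the ends. It suffices to produce a compactly supported Hamiltonian isotopy of $B^4$, fixed near $\partial B^4$, carrying $K_s$ to $\Phi_s^{-1}(0_{S^1})$ for every $s$.

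Next I would carry out the construction in the proof of \Cref{lem:NLC} in a family over $\sigma$. Picking a family of Lagrangian isotopies $\varphi^s_t:S^1\to T^*S^1$ from $K'_s$ to $0_{S^1}$, depending smoothly on $s$ and constant ($\equiv 0_{S^1}$) for $s$ in the ends --- possible since the $K'_s$ already trace a smooth path in the space of Lagrangian embeddings, all isotopic to $0_{S^1}$, so one may concatenate with a fixed nullisotopy and smooth --- the explicit formula $A^s_t=-\big(\int_{\varphi^s_t(S^1)}p\,d\theta\big)\big/\big(\int_{\varphi^s_t(S^1)}d\theta\big)$ from \Cref{lem:NLC} then yields a smooth-in-$s$ family of exact Lagrangian isotopies $t\mapsto\tau_{A^s_t}\circ\varphi^s_t$ from $K'_s$ to $0_{S^1}$; these integrate to a smooth family of Hamiltonian isotopies $\psi^s_t$ of $(T^*S^1,d\lambda_{S^1})$ with $\psi^s_0=\mathrm{id}$, $\psi^s_1(K'_s)=0_{S^1}$, and $\psi^s_t\equiv\mathrm{id}$ for $s$ in the ends. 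Set $L_t:=\bigcup_{s\in\sigma}\Phi_s^{-1}\big(\psi^s_t(K'_s)\big)$, so that $L_0=L$, $L_1$ is the Lagrangian meeting every fiber over $\sigma$ exactly along the zero section, and $L_t=L$ near $\partial B^4$.

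The step I expect to be the real obstacle is checking that $L_t$ is an \emph{exact Lagrangian submanifold of $B^4$} for every $t$: a fiberwise isotopy preserves Lagrangianness in each fiber automatically, but not in the total space. Here I would use that $\tilde{f}^{-1}(\sigma)$ is a coisotropic hypersurface whose characteristic foliation is precisely symplectic parallel transport along $\sigma$, so that the hypotheses force $L$ --- being Lagrangian and fibered over $\sigma$ --- to be a union of characteristic leaves; choosing the $\Phi_s$ by parallel transport and running the family construction along the characteristic foliation keeps each $L_t$ a union of characteristic leaves, hence Lagrangian, while exactness is homological, the period of $\lambda_0$ over the fiber circle of $L_t$ vanishing because $\psi^s_t(K'_s)$ is an exact curve in $T^*S^1$ at every $t$. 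Granting this, $\{L_t\}$ is an isotopy of $L$ through exact Lagrangian embeddings that is the identity near $\partial B^4$, so by \cite[Theorem 3.6.7]{Oh15_SymplTopFloerHom} it extends to a compactly supported Hamiltonian isotopy of $(B^4,\omega_0)$ taking $L=L_0$ to $L_1$, which meets each fiber over $\sigma$ precisely along the zero section.
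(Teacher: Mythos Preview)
Your proposal is correct and follows essentially the same route as the paper: apply \Cref{lem:NLC} fiberwise over $\sigma$, using that the explicit construction in its proof depends smoothly on the base parameter $s$ and is the identity over the ends, so the resulting family assembles into a compactly supported Hamiltonian isotopy of $(B^4,\omega_0)$. The paper's version is terser---it simply asserts that the smooth family of fiberwise Hamiltonian isotopies is a well-defined global Hamiltonian isotopy---whereas you additionally justify that each $L_t$ is an exact Lagrangian in the total space via the characteristic foliation of $\tilde f^{-1}(\sigma)$ before invoking \cite[Theorem~3.6.7]{Oh15_SymplTopFloerHom}; this extra care is warranted and in the right spirit.
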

\begin{proof}
From \Cref{lem:compends} we know that outside of $M\sse B^4_1$ (as defined in \Cref{rmk:defineM}), $L$ intersects all fibers along the zero section. Now restricting to the closed curve $\sigma_M:=f(L\cap M)$, and parametrizing it by $s\in[0,1]$, call $\gamma_s := L\cap f^{-1}(s)$ the exact curve in which $L$ intersects each fiber over $\sigma_M$. Given that the embedding $\gamma_s$ varies smoothly depending on $s$ as well as the fact that $\gamma_0,\gamma_1$ are both the zero section of their respective fibers, the vertical translation that defines the Hamiltonian isotopy in each fiber in \Cref{lem:NLC} also varies smoothly with respect to $s$. That is, the one-parameter family of fiberwise Hamiltonian isotopies is a well-defined global Hamiltonian isotopy of $(B^4,\omega_0)$ that brings $L$ to the zero section in each of the fibers $\tilde{f}^{-1}(c)$ for $c\in \sigma_M$.
\end{proof}

\subsection{The Horizontal Isotopy}\label{sec:horiziso}
The final step now is to argue that for any filling $L$, there exists a Hamiltonian isotopy bringing $L$ to either $L_{Cl}$ or $L_{Ch}$, our two standard representatives as established in \Cref{sec:std_fillings}. Following the previous two sections, we have that $L$ is compatible with respect to the standard Lefschetz fibration with some representative curve $\sigma\sse B^2_1\sm \{0\}$ which has extended winding number (as defined in \Cref{sec:std_fillings}) either $0$ or $1$ around $0\in B^2_1$ and $L$ intersects each fiber $\tilde{f}^{-1}(c)$, $c\in\sigma$ precisely along the zero section. Thus, such a Hamiltonian isotopy can be constructed by working solely in $B^2_1$ and deforming $\sigma$ to $\sigma_{Cl}$ or $\sigma_{Ch}$. Importantly, a standard result tells us that for surfaces, symplectomorphisms are in bijection with area- and orientation-preserving diffeomorphisms \cite{McDuff17_IntroSympl}. This allows us to take a more visual approach to establish the required result.

\begin{lemma}\label{lem:horiziso}
Suppose an embedded Lagrangian filling $L$ of $\Lambda$ is compatible with the standard Lefschetz fibration $\tilde{f}:(B^4_1,\omega_0)\to B^2_1$ over the curve $\sigma=f(L)\sse B^2_1\sm \{0\}$. If $\sigma$ has extended winding number $1$ (respectively 0), then $L$ is Hamiltonian isotopic to $L_{CL}$ (respectively $L_{Ch}$).
\end{lemma}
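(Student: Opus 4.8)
The plan is to reduce to the zero-section filling and then realise the passage to the model curve by an honest isotopy of embedded arcs in the punctured disk $B^2_1\sm\{0\}$, lifted fiberwise. Concretely, I would begin by invoking \Cref{lem:hamisotozerosection}: its hypotheses are exactly those of the present lemma, so after a compactly supported Hamiltonian isotopy we may assume that $L=L_\sigma$, the $S^1$-bundle over $\sigma$ given by the zero section in each fiber of $\tilde f$. This isotopy is fiberwise, hence fixes $\sigma=\tilde f(L)$ pointwise, so the extended winding number of $\sigma$ is unchanged and still equals $1$ (resp.\ $0$). Choosing $T_0>0$ large enough that both $\sigma$ and the model curve $\sigma_{Cl}$ (resp.\ $\sigma_{Ch}$) coincide with the radial ends near the endpoints $e^{i\pi/4},e^{i7\pi/4}$ in the region $\tilde f\big((T_0,\infty)\times S^3\big)$, the fillings $L$ and $L_{Cl}$ (resp.\ $L_{Ch}$) already agree near $\partial B^4_1$, so it suffices to produce a Hamiltonian isotopy supported in the interior compact set $M$.

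The topological heart of the argument is the classification of embedded arcs in the punctured disk. The surface $B^2_1\sm\{0\}$ is an annulus, and an embedded arc with endpoints $e^{i\pi/4},e^{i7\pi/4}$ and prescribed germs there is determined, up to isotopy rel a neighborhood of its endpoints, by its homotopy class rel endpoints, since embedded arcs on a surface that are homotopic rel endpoints are isotopic rel endpoints (the change-of-coordinates/bigon argument from surface topology). The set of such homotopy classes is a torsor under $\pi_1(B^2_1\sm\{0\})\cong\Z$, and the bijection with $\Z$ is realised by closing an arc up with the fixed clockwise boundary arc $C$ and reading off the winding number about $0$ — that is, exactly the extended winding number of \Cref{sec:std_fillings}. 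Hence, since $\sigma$ and $\sigma_{Cl}$ (resp.\ $\sigma_{Ch}$) have the same extended winding number, there is a smooth family $\sigma_t\sse B^2_1\sm\{0\}$, $t\in[0,1]$, of embedded arcs with $\sigma_0=\sigma$ and $\sigma_1=\sigma_{Cl}$ (resp.\ $\sigma_{Ch}$), coinciding with the fixed radial ends throughout; in particular each $\sigma_t$ avoids $0$ and satisfies the asymptotic tangency condition to $\tilde f\big((T,\infty)\times\Lambda\big)$.

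Now I would set $L_t:=L_{\sigma_t}=\tilde f^{-1}(\sigma_t)\cap\{\,\|\wt{z_1}\|^2-\|\wt{z_2}\|^2=0\,\}$. By \Cref{lem:std_fillings_true} each $L_t$ is an embedded exact Lagrangian filling of $\La$ — note the exactness is automatic here, since on the zero section $\|\wt{z_1}\|^2=\|\wt{z_2}\|^2$ forces the period computation of \Cref{lem:std_fillings_true} to give $\int_{e_0}\lambda=0$ independently of $\sigma_t$ — and $\{L_t\}_{t\in[0,1]}$ is a smooth Lagrangian isotopy that is stationary near $\partial B^4_1$. Since every $L_t$ is exact, this Lagrangian isotopy extends to a Hamiltonian isotopy of $(B^4_1,\omega_0)$, compactly supported because $L_t$ is fixed near the boundary, by \cite[Theorem 3.6.7]{Oh15_SymplTopFloerHom} as recalled in \Cref{sec:Lagr_def}. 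Concatenating with the vertical isotopy of the first step yields a compactly supported Hamiltonian isotopy from the original $L$ to $L_{\sigma_{Cl}}=L_{Cl}$ (resp.\ $L_{\sigma_{Ch}}=L_{Ch}$), which is the assertion. (This also shows, a posteriori, that the choices of $\sigma_{Cl}$ and $\sigma_{Ch}$ in \Cref{sec:std_fillings} are immaterial up to Hamiltonian isotopy.)

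The step I expect to require the most care is the bookkeeping that keeps the whole isotopy \emph{rel the ends}, so that the asymptotic condition to $\La$ survives at every parameter and the resulting Hamiltonian isotopy is genuinely compactly supported; this is also where one must keep $\sigma_t$ disjoint from $0$, which is precisely what makes $L_{\sigma_t}$ well defined. The symplectic input is by contrast soft: because the fillings in play are the zero-section ones, exactness comes for free and there is no need to arrange the base isotopy $\sigma_t$ to be area preserving — the ``visual''/area-preserving viewpoint flagged before the lemma statement is an equivalent but heavier alternative, since it would require lifting an area-preserving isotopy of $B^2_1\sm\{0\}$ through the symplectic connection of $\tilde f$, which the route above sidesteps.
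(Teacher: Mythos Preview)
Your proof is correct and in fact cleaner than the paper's. Both arguments begin identically, applying \Cref{lem:hamisotozerosection} to reduce to $L=L_\sigma$ and then seeking an isotopy $\sigma_t$ of embedded arcs in $B^2_1\sm\{0\}$ rel the ends, lifted to the zero-section fillings $L_t=L_{\sigma_t}$. The difference lies in how the base isotopy is produced. The paper insists on realising the passage from $\sigma$ to the model curve by an \emph{area-preserving} isotopy of the base disk, and since the enclosed areas need not match it first performs an auxiliary ``bump'' Hamiltonian isotopy near one end of $\sigma$ to balance them. You bypass this entirely: since every $L_{\sigma_t}$ lies on the hypersurface $\{\|\wt{z_1}\|^2=\|\wt{z_2}\|^2\}$, \Cref{lem:std_fillings_true} gives exactness of each $L_t$ automatically, so any smooth isotopy of embedded arcs in the punctured disk suffices, and the existence of such an isotopy is pure surface topology (homotopic arcs rel endpoints are isotopic, with the homotopy class detected exactly by the extended winding number). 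Your observation that the area-preserving viewpoint is ``an equivalent but heavier alternative'' is on the mark: the paper's route would be needed if one wanted to lift a base symplectomorphism through the symplectic connection of $\tilde f$, but since the paper ultimately lifts via the explicit zero-section formula anyway, the area bookkeeping is not actually used. What you gain is a shorter argument with no auxiliary Hamiltonian; what the paper's route offers is a more hands-on picture (the explicit bump Hamiltonian and \Cref{fig:horiz_iso_1}) and the remark that the construction can alternatively be phrased as a contact isotopy extended through the symplectization.
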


\begin{proof}
Suppose that $\sigma$ has extended winding number 0; the proof for the Clifford-type case is identical. We begin by applying Lemma \Cref{lem:hamisotozerosection} so that $L$ intersects each fiber $\tilde{f}^{-1}(c)$, $c\in \sigma$, along the zero section. We want to argue that $\sigma$ can be brought to $\sigma_{Ch}$ through an area-preserving diffeomorphism, where $\sigma_{Ch}$ is the representative curve used to define $L_{Ch}$ as in \Cref{sec:std_fillings}. 

Note that by the definition of a Lagrangian filling as well as \Cref{lem:compends}, $\sigma$ and $\sigma_{Ch}$ coincide in some subset of their ends. Indeed, we have that $Ends(L)=(T_1,\infty)\times \Lambda$ and $Ends(L_{Ch})=(T_2,\infty)\times \Lambda$ for some $T_1,T_2>0$. And so choosing $T=\max \{T_1,T_2\}$, we have that
$$Ends(\sigma)\cap Ends(\sigma_{Ch}) = \tilde{f}\big(Ends(L)\big)\cap\tilde{f}\big(Ends(L_{Ch})\big) = \tilde{f}\big((T,\infty)\times \Lambda\big).$$

For simplicity, we can redefine $Ends(\sigma)=Ends(\sigma_{Ch}) = \tilde{f}\big((T,\infty)\times \Lambda\big)$, which is the union of two open line segments in $B^2_1$. Now if we were considering $\sigma,\sigma_{Ch}$ as closed curves, i.e. as curves in $\overline{B^2_1}$ with fixed endpoints $e^{i\pi/4}, e^{i7\pi/4}$, then it would not be true in general that such an area-preserving map would exist. However, since our fillings are open submanifolds in an open manifold $(B^4_1,\omega_0)$, such a result is possible.

\begin{center}
	\begin{figure}[h!]
		\centering
		\includegraphics[scale=.9]{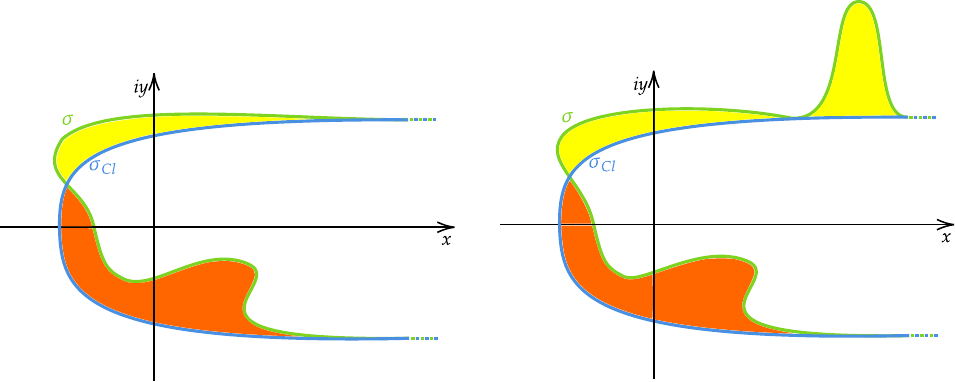}
		\caption{(Left) A curve $\sigma\sse \C$ in the base of the standard Lefschetz fibration with the yellow and orange regions being the areas needed to be displaced to bring $\sigma$ to $\sigma_{Cl}$ by an area-preserving diffeomorphism. (Right) The curve $\sigma$ following the Hamiltonian isotopy that adds a bump, which results in the yellow and orange regions having equal area.\vspace{.75cm}}
		\label{fig:horiz_iso_1}
	\end{figure}
\end{center}

The key idea is that adding a bump to $\sigma$ is a Hamiltonian isotopy that adds or subtracts the necessary area in order to deform $\sigma$ to $\sigma_{Cl}$ through an area-preserving diffeomorphism. Rigorously, fix coordinates $(x,y)\in \C$ where $x+iy=z$ for any $z\in\C$. Then the family of Hamiltonian functions
\begin{align*}
H_t:\C\times [0,1]&\to \R\\
(x,y,t)&\mapsto t\int_{-\infty}^x e^{-\tau^2}d\tau,
\end{align*}
generates a Hamiltonian flow $\phi_t(x,y) = (x,y+te^{-x^2})$ that brings the $x$-axis to the graph of $e^{-x^2}$. Note that this flow is in fact compactly supported. Thus, with appropriate scaling and translation applied to $H_t$, we can apply this Hamiltonian isotopy to one of the open line segments of $Ends(\sigma)$ in order to add (or subtract) the correct area so that $\sigma$ can be deformed to $\sigma_{Ch}$ through an area-preserving diffeomorphism.

Given that both $\sigma$ and $\sigma_{Ch}$ have extended winding number $0$, it follows that such a diffeomorphism extends to an isotopy $\sigma_t$ such that $\sigma_0=\sigma$, $\sigma_1=\sigma_{Ch}$, and $\sigma_t$ is disjoint from $0\in \C$ for all $t$. In particular, $\sigma_t$ has extended winding number $0$ for all $t$. Therefore, the exact Lagrangian isotopy given by 
$$L_t = \tilde{f}^{-1}(\sigma_t)\cap \{||\wt{z_1}||^2-||\wt{z_2}||^2=0\}$$ 
has $L_0=L$, $L_1=L_{Ch}$, and $L_t$ is embedded for all $t$ as we avoid the unique singular fiber $\tilde{f}^{-1}(0)$. Thus we conclude that $L$ is Hamiltonian isotopic to $L_{Ch}$ through a compactly supported isotopy that is standard outside of the subset $M\sse B^4_1$.
\end{proof}

\begin{remark}
The first Hamiltonian isotopy used to add a bump can alternatively be constructed as a contact isotopy that then extends to a Hamiltonian isotopy in the symplectization. In particular, given that the ends of our Lagrangian fillings are identically $(T,\infty)\times \Lambda$ for some $T>0$, we can pick $T<T'<\infty$ such that $\tilde{f}\big(\{T'\}\times \Lambda_2\big) = (1-\delta')e^{i7\pi/4}\in \C$ for some $\delta'\in \R$, $0<\delta'<1$. Then in the contactization of the fiber over $(1-\delta')e^{i7\pi/4}$: 
$$(\tilde{f}^{-1}((1-\delta')e^{i7\pi/4})\times S^1,\xi'),$$
we can define a contact isotopy by moving along the added copy of $S^1$ that brings $\{T'\}\times \Lambda_2\sse B^4_1$ to a new copy of $\Lambda_2$ that fibers over a different point along the circle of radius $1-\delta'$ in $\C$. Then, from \cite[Proposition 3.1]{casals17_ExoticDisks}, such a contact isotopy extends to a compactly supported Hamiltonian isotopy of the symplectization, which we note is symplectomorphic to $(B^4,\omega_0)$ after adding in the fixed point at the origin. This Hamiltonian isotopy thus deforms a neighborhood of one of the components of $Ends(L)$ in the same way as \Cref{fig:horiz_iso_1} for the appropriate choice of contact isotopy.
\end{remark}

\subsection{Conclusion}
The main result, \Cref{thm:main}, is now a direct consequence of the sequence of results from the previous sections.
\begin{proof}[Proof of \Cref{thm:main}] Let $L$ be an embedded Lagrangian filling of the Legendrian Hopf link $\Lambda$. Applying \Cref{lem:compmain} allows us to obtain a pseudoholomorphic conic fibration compatible with $L$ following a potential Hamiltonian isotopy of $L$ to disjoin it from the nodal conic. The Hamiltonian isotopy from \Cref{lem:backtolefsch} deforms $L$ so that it is now compatible with the standard Lefschetz fibration $\tilde{f}:(B^4_1,\omega_0)\to B^2_1\sse \C$. We then apply the vertical isotopy from \Cref{lem:hamisotozerosection} followed by the horizontal isotopy from \Cref{lem:horiziso} to establish a Hamiltonian isotopy that brings $L$ to one of our standard representatives: $L_{Cl}$ or $L_{Ch}$ as defined in \Cref{sec:std_fillings}.

Therefore, every embedded Lagrangian filling is contained in one of the (compactly supported) Hamiltonian isotopy classes $[L_{Cl}],[L_{Ch}]$. Now we claim that there are precisely two Hamiltonian isotopy classes; that is, that $L_{Cl}$ and $L_{Ch}$ are not Hamiltonian isotopic. If that were the case, then following our result, all embedded Lagrangian fillings would be Hamiltonian isotopic. But from \Cref{sec:sheafClassification}, we have that there are at least two such isotopy classes which contradicts the previous statement. This completes the classification of all embedded exact Lagrangian fillings for the max-$tb$ Legendrian Hopf link $\Lambda$.
\end{proof}

\color{black}
\bibliographystyle{alpha}
\bibliography{main}

\end{document}